\newcommand{\affine}{\mathbb{A}}
\newcommand{\QQ}{\mathbb{Q}}
\newcommand{\FF}{\mathbb{F}}
\newcommand{\ZZ}{\mathbb{Z}}
\newcommand{\RR}{\mathbb{R}}
\newcommand{\PP}{\mathbb{P}}
\newcommand{\Ocal}{\mathcal{O}}
\newcommand{\basin}{\mathcal{B}}
\newcommand{\SL}{\operatorname{SL}}
\newcommand{\Div}{\operatorname{Div}}
\newcommand{\conv}{\operatorname{conv}}
\newcommand{\ord}{\operatorname{ord}}
\newcommand{\Supp}{\operatorname{Supp}}
\newcommand{\Gal}{\operatorname{Gal}}
\newcommand{\julia}{\mathcal{K}}
\renewcommand{\phi}{\varphi}
\renewcommand{\epsilon}{\varepsilon}
\newtheorem{theorem}{Theorem}[section]
\newtheorem{lemma}[theorem]{Lemma}
\newtheorem{conjecture}[theorem]{Conjecture}
\newtheorem{prop}[theorem]{Proposition}
\theoremstyle{definition}
\title{Canonical Heights  for H\'{e}non maps}
\author{Patrick Ingram}
\date{\today}
\address{Department of Mathematics, Colorado State University}
\email{pingram@math.colostate.edu}
\thanks{This work was supported in part by a Discovery Grant from NSERC of Canada.}
\begin{document}

\maketitle

\begin{abstract}
We consider the arithmetic of H\'{e}non maps \[\phi(x, y)=(ay, x+f(y))\] defined over number fields and function fields, usually with the restriction $a=1$.  
We prove a result on the variation of Kawaguchi's canonical height in families of H\'{e}non maps, and derive from this a specialization theorem, showing that the set of parameters above which a given non-periodic point becomes periodic is a set of bounded height.  Proving this involves showing that the only points of canonical height zero for a H\'{e}non map over a function field are those which are periodic (in the non-isotrivial case).  In the case of quadratic H\'{e}non maps $\phi(x, y)=(y, x+y^2+b)$, we obtain a stronger result, bounding the canonical height below by a quantity which grows linearly in the height of $b$, once the number of places of bad reduction is fixed.  Finally, we propose a conjecture regarding $\QQ$-rational periodic points for quadratic H\'{e}non maps defined over $\QQ$, namely that they can only have period 1, 2, 3, 4, 6, or 8.  We check this conjecture for the first million values of $b\in\QQ$, ordered by height.
\end{abstract}

\section{Introduction}
The study of the arithmetic properties of H\'{e}non maps was initiated by Silverman \cite{silverman}, who showed that if $K$ is a number field, $a\in K^*$, and $b\in K$, then the periodic points  for the H\'{e}non map
\[\phi(x, y)=(ay, x+y^2+b)\]
are contained in a set of bounded height.  In particular, such a map has only finitely many $K$-rational periodic points.  This result was subsequently extended and generalized by Denis \cite{denis}, Kawaguchi \cite{kawaguchi}, and Marcello \cite{marcello}.  The purpose of this note is to explore further the canonical heights associated to H\'{e}non maps, i.e., maps of the form
\[\phi(x, y)=(ay, x+f(y)),\]
where $f$ is a polynomial of degree at least 2, defined over a number field or function field.
In particular, we prove analogues of several results known for dynamics of polynomials of one variable.  It should be noted, however, that the one-variable case is considerably simpler; any rational self-map of $\affine^1$ extends to an endomorphism of $\PP^1$, and hence the apparatus of Weil's height machine may be applied.  H\'{e}non maps, on the other hand, do not extend to endomorphisms of a projective surface, and so standard results on dynamics of projective varieties need not apply.  We avoid these problems by taking a somewhat more explicit approach, constructing explicit local height functions.

Kawaguchi~\cite{kawaguchi} constructed canonical heights associated to polynomial automorphisms of $\affine^2$ which, in the case of H\'{e}non maps, are given by
\begin{gather*}
\hat{h}_{\phi}^+(P)=\lim_{N\to\infty}d^{-N}h(\phi^N(P))\\
\hat{h}_{\phi}^-(P)=\lim_{N\to\infty}d^{-N}h(\phi^{-N}(P))\\
\hat{h}_\phi(P)=\hat{h}^+_\phi(P)+\hat{h}_\phi^-(P).
\end{gather*}
Our first result shows that this canonical height varies regularly in families.
\begin{theorem}\label{th:tate}
Let $C$ be a smooth, projective curve over a number field $K$, and let $\phi(x, y)=(ay, x+f(y))$ be defined over $F=K(C)$. Then if $P\in \affine^2(F)$, there are divisors $D_+, D_-\in\Div(C)\otimes \QQ$, depending on $\phi$ and $P$, such that
\[\hat{h}_{\phi_t}^+(P_t)=h_{D_+}(t)+O(1)\]
and
\[\hat{h}_{\phi_t}^-(P_t)=h_{D_-}(t)+O(1).\]
\end{theorem}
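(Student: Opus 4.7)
The plan is to adapt the Tate-telescoping argument, familiar from Silverman's theorem on canonical heights in elliptic families and from Call--Silverman for dynamical canonical heights on $\PP^1$, to the H\'enon setting. I describe the argument for $\hat h_\phi^+$, since $\hat h_\phi^-$ is symmetric: the inverse $\phi^{-1}(x, y) = (y - f(x/a), x/a)$ becomes, after swapping coordinates, a H\'enon map of the same degree, and the whole argument applies to it.

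First, view the point $P = (x, y) \in \affine^2(F)$ and each iterate $\phi^N(P) = (x_N, y_N)$ as a rational map $C \to \affine^2 \hookrightarrow \PP^2$ (defined on a dense open subset), and take $D_N \in \Div(C) \otimes \QQ$ to be the pull-back of $\Ocal(1)$ under this composition. The Weil height machine gives $h(\phi_t^N(P_t)) = h_{D_N}(t) + O_N(1)$. If the error can be made uniform in $N$, and if $d^{-N} D_N$ converges in $\Div(C) \otimes \QQ$ to a limit $D_+$, then dividing by $d^N$ and passing to the limit yields the theorem.

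Both ingredients come from an explicit local analysis. At each place $c$ of $F = K(C)$, I would construct a local height $\lambda_c \colon \affine^2(\bar F_c) \to \RR$ satisfying a transformation rule $\lambda_c(\phi(Q)) = d\,\lambda_c(Q) + O_c(1)$, with the error depending only on $c$ and $\phi$, not on $Q$. Because $\phi$ has good reduction outside a finite set $S \subset C$, the transformation is exact off $S$, so the local defects vanish at all but finitely many places, making the $O(1)$ in the Weil estimate uniform in $N$. Telescoping then shows that $d^{-N}\lambda_c(\phi^N(P))$ converges to a local canonical height $\hat\lambda_c^+(P)$, and the function-field stability of escaping orbits (eventual equality $\lambda_c(\phi^{N+1}(P)) = d\,\lambda_c(\phi^N(P))$ for large $N$) makes this limit rational. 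Thus $d^{-N} D_N$ converges to a divisor $D_+ \in \Div(C) \otimes \QQ$ supported on the finite set $S \cup \Supp(D_0)$, where $D_0$ is the initial divisor associated to $P$.

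The main obstacle is the uniform local transformation rule at the bad places. Because $\phi$ does not extend to a morphism of any projective compactification of $\affine^2$, one cannot appeal to standard functorial estimates for polarized endomorphisms; the bounds must instead be extracted from the explicit triangular form $\phi(x, y) = (ay, x + f(y))$, as in Kawaguchi's original construction of $\hat h_\phi^\pm$. The $c$-adic behaviour of $\phi^N(P)$ is governed by the Newton polygon of $f$ together with the $c$-adic valuations of $a$, $x$, and $y$, and one must adapt Kawaguchi's local-height estimates to the function-field setting and then transfer the resulting bounds to specializations via the standard relationship between places of $F$ and places of $K(t)$ for $t \in C(\bar K)$.
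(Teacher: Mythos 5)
Your proposal correctly identifies the limiting divisor (your $D_+ = \lim_N d^{-N}D_N$ coincides with the paper's $D_+ = \sum_\beta \hat{\lambda}^+_{\beta,\phi}(P)(\beta)$), correctly explains why $D_+$ is rational (once an iterate enters the escape basin at a place $\beta$ of $F$, the local height scales exactly by $d$), and correctly reduces the $\hat h^-$ case to $\hat h^+$ by symmetry. But there is a genuine gap at the decisive step. You claim that the transformation rule $\lambda_c(\phi(Q)) = d\,\lambda_c(Q) + O_c(1)$ at places $c$ of $F=K(C)$, combined with good reduction outside a finite set $S\subset C$, "makes the $O(1)$ in the Weil estimate uniform in $N$." These are two different kinds of uniformity, and the inference does not go through: the local estimates at places of $F$ control the convergence $d^{-N}D_N\to D_+$, but the error $O_N(1)$ in $h(\phi^N_t(P_t)) = h_{D_N}(t) + O_N(1)$ is measured at places $v$ of $K$ and depends on $N$ through the singularities of the rational map from $C$ to $\PP^2$ given by $\phi^N(P)$. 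Telescoping the one-step estimate in the Call--Silverman style yields only the weaker error $o(h(t))$, because the one-step comparison $|\log^+\|\phi_t(Q)\|_v - d\log^+\|Q\|_v|$ is bounded in terms of the $v$-adic size of the coefficients of $\phi_t$, which aggregates to $O(h(t))$ rather than $O(1)$. Your closing appeal to "the standard relationship between places of $F$ and places of $K(t)$" names exactly the step that has to be carried out and leaves it undone.

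The paper bypasses both the intermediate divisors $D_N$ and the Weil-machine estimate altogether. It fixes $D_+$ from the outset and compares the two local height systems directly, proving by $M_K$-divisor bookkeeping (Lemmas~\ref{lem:tate lemma main}, \ref{lem:tate lemma bad case}, \ref{lem:tate lemma bounded part}) that $\hat{\lambda}^+_{v,\phi_t}(P_t) - \lambda_{v,D_+}(t)$ is bounded in absolute value by the logarithm of an $M_K$-divisor; since an $M_K$-divisor is $1$ at almost every $v$, summing over $v$ with the local degrees gives a genuine $O(1)$. The delicate case is $\beta\in\Supp(D_+)$, handled after first replacing $P$ by a suitable iterate so that $P$ already lies in $\basin^+_\beta(\phi)$ at every such $\beta$: one must show that for $t$ sufficiently $v$-adically close to $\beta$ the specialization $P_t$ falls into $\basin^+_v(\phi_t)$, so that $\hat{\lambda}^+_{v,\phi_t}(P_t)=\log|y_t|_v + O(1)$ with error controlled by an $M_K$-divisor, and then match $\log|y_t|_v$ against $-\ord_\beta(y)\log\delta_v(\beta,t)$, again uniformly in that sense. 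That simultaneous uniformity in $v$ and $t$ is where the real work of the proof is concentrated and is precisely what your sketch lacks.
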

As a corollary to this result, we note that for any degree-one height $h$ on $C$, we have
\begin{equation}\label{eq:variation simple}\hat{h}_{\phi_t}(P_t)=\hat{h}_\phi(P)h(t)+\epsilon(t),\end{equation}
where $\epsilon(t)=O(1)$ if $C=\PP^1$, and $\epsilon(t)=O(h(t)^{1/2})$ in general.

This result is analogous to a result of the author \cite{variation}, which strengthened more general estimates of Call and Silverman \cite{call-silv} in the case of polynomials of one variable.  One application of an estimate of the form \eqref{eq:variation simple} is in determining which specializations of a one-parameter family land in periodic cycles (note that, since H\'{e}non maps are automorphisms, orbits are either periodic or infinite in both directions).  Since vanishing of the canonical height uniquely identifies periodic points over number fields, we see that the set of periodic specializations of a one-parameter family must be a set of bounded height, unless $\hat{h}_{\phi}(P)=0$ (on the generic fibre).  However, since the Northcott finiteness property does not hold in the context of function fields, it is not obvious when this condition obtains.  We provide an answer for H\'{e}non maps of a certain form, a theorem which is analogous to results of Benedetto \cite{benedetto} and Baker \cite{baker} in the univariate case.  For the purpose of the following result, a \emph{function field} will be any field with a set of non-trivial non-archimedean absolute values satisfying a product formula, a definition which encompasses function fields of smooth varieties over algebraically closed fields of any characteristic.

\begin{theorem}\label{th:benedetto}
Let $K$ be a function field, and let $\phi(x, y)=(y, x+f(y))$ for $f(z)\in K[z]$ of degree at least 2 (note that $a=1$).  Then either $\phi$ is isotrivial or else the set of elements $P\in\affine^2(K)$ with $\hat{h}_\phi(P)=0$ is finite, bounded in size in terms of the number of places of bad reduction for $\phi$.  In particular, if $\phi$ is not isotrivial, then $\hat{h}_\phi(P)=0$ if and only if $P$ is periodic for $\phi$.
\end{theorem}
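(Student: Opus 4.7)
The plan is to adapt the strategy used by Benedetto and Baker for one-variable polynomials: namely, to show that the condition $\hat{h}_\phi(P)=0$ forces $P$ to lie in the local filled Julia set at every place $v$, and then to use non-isotriviality to bound the number of $K$-points satisfying such a strong integrality-type condition. I would begin by decomposing each $\hat{h}_\phi^\pm$ as a sum over places $v$ of non-negative local canonical heights $\hat{\lambda}_{\phi,v}^\pm$, constructed via the standard telescoping formula applied to $\phi$ and $\phi^{-1}$.  Each $\hat{\lambda}_{\phi,v}^\pm$ will vanish precisely when the forward (resp.\ backward) $\phi$-orbit of the point is $v$-adically bounded, so $\hat{h}_\phi(P)=0$ forces $P \in \julia_v := \julia_v^+\cap\julia_v^-$ at every $v$.

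At a place $v$ of good reduction — where the coefficients of $f$ are $v$-integral and the leading coefficient is a $v$-adic unit — the explicit form of $\phi$ yields an elementary escape-radius argument: if $|x|_v>1$, then applying $\phi^{-1}(x,y)=(y-f(x),x)$ makes the first coordinate $v$-adically grow like $|x|_v^d$, so the backward orbit escapes; symmetrically, $|y|_v>1$ forces forward escape. Thus $\julia_v$ equals the unit polydisk $\{|x|_v\leq 1,\ |y|_v\leq 1\}$. At a place $v$ of bad reduction, $\julia_v$ is still bounded, but by a radius $M_v$ depending on the $v$-adic valuations of the coefficients of $f$.  Consequently any $P=(x,y)$ with $\hat{h}_\phi(P)=0$ satisfies $|x|_v,|y|_v\leq 1$ at every good place and $|x|_v,|y|_v\leq M_v$ at the finite set $S$ of bad places; the coordinates of such a point therefore lie in an $S$-integral analogue of a Riemann--Roch space, a finite-dimensional vector space $V$ over the constant field whose dimension is controlled by $|S|$.

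The zero set $\mathcal{Z}:=\{P:\hat{h}_\phi(P)=0\}$ is $\phi$-invariant (since $\hat{h}_\phi^\pm$ are preserved up to the factors $d^{\pm 1}$) and contained in $V\times V$. If $\mathcal{Z}$ were infinite, one could extract a positive-dimensional Zariski-closed subvariety of $\affine^2$, defined over the algebraic closure of the constant field and preserved by $\phi$; constructing a conjugating automorphism from this invariant geometry forces $\phi$ to be conjugate over $\bar{K}$ to a map with constant coefficients, contradicting non-isotriviality.  Once $\mathcal{Z}$ is known to be finite, periodicity of each of its elements is immediate: the full bi-infinite $\phi$-orbit of any $P\in\mathcal{Z}$ lies in $\mathcal{Z}$, so the orbit itself must be finite.

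The main obstacle, as in Baker's polynomial result, is the implication \emph{$\mathcal{Z}$ infinite inside a constant-field-finite-dimensional space $\implies$ $\phi$ isotrivial}: this requires a careful algebraic argument that produces an explicit $\bar{K}$-conjugation of $\phi$ into a model with constant-field coefficients, using the $\phi$-invariant positive-dimensional locus furnished by the accumulation of $\mathcal{Z}$.  The quantitative version (bounding $|\mathcal{Z}|$ in terms of $|S|$) then reduces to a Riemann--Roch dimension count for $V$ together with a bound on the number of $\phi$-periodic points realizable inside $V$, both of which should be routine once the structural statement is in hand.
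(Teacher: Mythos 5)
Your opening is sound and matches the paper: decomposing $\hat{h}_\phi$ into local heights, noting $\hat{h}_\phi(P)=0$ forces $P$ into the $v$-adic filled Julia set at every place, and observing that at good places $\julia_v$ has radius~$1$ while at bad places it is bounded by some $M_v$. But the crucial step is missing, and the one you offer in its place does not work. The coordinates of a point $P$ with $\hat{h}_\phi(P)=0$ do lie in a Riemann--Roch-type space $V$ of finite dimension over the constant field, but in the setting of this theorem the constant field may be infinite (e.g.\ $\overline{\FF}_p$ or $\CC$), so $V$ is typically infinite and the ``Riemann--Roch dimension count'' cannot by itself bound $\#\mathcal{Z}$ in terms of $|S|$ --- let alone prove finiteness. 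The subsequent passage from ``$\mathcal{Z}$ infinite in a finite-dimensional space'' to ``a positive-dimensional $\phi$-invariant subvariety'' to ``$\phi$ is conjugate to a constant-coefficient model'' is asserted, not proved, and you yourself flag it as the main obstacle. That implication is essentially the entire content of the theorem, so the proposal is not a proof.

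The paper's actual argument takes a more delicate and entirely combinatorial route, closer to Benedetto's than to Baker's. At each bad place $v$ (meaning $\rho_v(f)>1$) it proves a \emph{clustering} statement for $\julia_{v,\phi}$: every point is within distance $1$ of a root pair $(\zeta',\zeta)$ of $f$, and in fact at least one of the three $y$-coordinates $y_{\phi^{-1}(P)}, y_P, y_{\phi(P)}$ is \emph{strictly} within distance $1$ of a root. A pigeonhole on $3d^3$ combinatorial types then produces, from any set of $(3d^6)^s+1$ points of canonical height zero, two distinct points $P_1,P_2$ for which the product $|y_{\phi^{-1}(P_1)}-y_{\phi^{-1}(P_2)}|_v\,|y_{P_1}-y_{P_2}|_v\,|y_{\phi(P_1)}-y_{\phi(P_2)}|_v$ is $<1$ at every bad place and $\le 1$ at every good place, violating the product formula once one also rules out (via the separate constant-$y$ lemma) the degenerate case where one of these differences vanishes. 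The strictness of the inequality at bad places, coming from the fine structure of $\julia_{v,\phi}$ around the period-two set $\Delta_f$, is precisely what your proposal lacks, and without it no product-formula contradiction is available. To repair your argument you would need to supply either this kind of non-archimedean clustering lemma or an independent proof of the invariant-subvariety step; as written the proposal has a genuine gap.
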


We define isotriviality below, but in the case of a function field of a variety, it corresponds to the map having constant coefficients after some linear change of variables.  It would, of course, be of considerable interest to obtain a version of Theorem~\ref{th:benedetto} in which one does not assume $a=1$, as it would for several of the results below.

We note that Theorem~\ref{th:benedetto} gives a bound on the number of periodic points for a H\'{e}non map over a function field, which depends only on the degree of $f$ and the number of places of bad reduction.  The proof can be modified to give a similar result over number fields, but in this case the result is already known, due to work of Pezda~\cite{pezda}.

Theorems~\ref{th:tate} and~\ref{th:benedetto} allow us to conclude the following specialization theorem, reminiscent of a result of Silverman for elliptic surfaces.

\begin{theorem}\label{th:silverman}
Let $\phi(x, y)=(y, x+f(y))$ be a non-isotrivial H\'{e}non map defined over the function field of a curve $C$ defined over a number field $K$.  Then either $P\in\affine^2(K(C))$ is periodic for $\phi$, or else
\[\{t\in C(\overline{K}):P_t\text{ is periodic for }\phi_t\}\]
is a set of bounded height.
\end{theorem}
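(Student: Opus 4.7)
My plan is to deduce Theorem~\ref{th:silverman} as a direct consequence of Theorems~\ref{th:tate} and~\ref{th:benedetto}, combined with the standard fact that, over a number field, Kawaguchi's canonical height for a H\'{e}non map vanishes precisely on periodic points (a consequence of the Northcott-type results of Silverman~\cite{silverman} and Kawaguchi~\cite{kawaguchi}).

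The first step is to observe that because $\phi$ is non-isotrivial over $K(C)$, Theorem~\ref{th:benedetto} applies and gives the dichotomy: either $P$ is periodic for $\phi$, in which case the first alternative of the conclusion holds, or else $\hat{h}_\phi(P)>0$. Assuming the latter, I would invoke the corollary \eqref{eq:variation simple} of Theorem~\ref{th:tate}: for a fixed degree-one height $h$ on $C$,
$$\hat{h}_{\phi_t}(P_t) = \hat{h}_\phi(P)\,h(t) + \epsilon(t), \qquad \epsilon(t)=O\bigl(h(t)^{1/2}\bigr),$$
valid for all $t\in C(\overline{K})$ outside a finite (hence bounded-height) set where the specialization fails to be a H\'{e}non map of the given form, e.g.\ where the leading coefficient of $f$ drops.

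Now suppose $t$ is a parameter with $P_t$ periodic for $\phi_t$. Then $\hat{h}_{\phi_t}(P_t)=0$, so substituting into the displayed equation and rearranging yields $\hat{h}_\phi(P)\,h(t) = -\epsilon(t) = O\bigl(h(t)^{1/2}\bigr)$. Since $\hat{h}_\phi(P)$ is a fixed positive constant, this forces $h(t)^{1/2}=O(1)$, so $h(t)$ is bounded, which is exactly the conclusion. There is no serious obstacle hidden in this final assembly; the substantive work lies entirely in Theorems~\ref{th:tate} and~\ref{th:benedetto}, and in particular in the fact that the error term $\epsilon(t)$ is \emph{strictly} sublinear in $h(t)$---any bound of the form $O(h(t))$ would have been too weak to conclude. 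Handling the extension of the height $h$ to $C(\overline{K})$ and the well-definedness of $\hat{h}_{\phi_t}$ at algebraic parameters is routine.
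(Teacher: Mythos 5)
Your proposal is correct and takes essentially the same route as the paper: both deduce $\hat{h}_\phi(P)>0$ from Theorem~\ref{th:benedetto}, then apply Theorem~\ref{th:tate} to conclude that the locus where the fibrewise canonical height vanishes has bounded height. The only cosmetic difference is that the paper phrases the second step in terms of the ample divisor $D=D_+(\phi,P)+D_-(\phi,P)$ and the $O(1)$ comparison $\hat{h}_{\phi_t}(P_t)=h_D(t)+O(1)$, while you invoke the equivalent corollary \eqref{eq:variation simple} with the $O(h(t)^{1/2})$ error term; both yield the same conclusion.
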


Once one has a bound on the number of points of canonical height zero, it is natural to ask if there is any non-trivial lower bound on the smallest positive values of the canonical heights associated to maps within a given family, in the spirit of conjectures of Lang \cite[VIII Conjecture 9.9]{aec} and Silveman \cite[Conjecture 4.98]{ads}.  It turns out that we can establish such a lower bound for a particular family, depending on the number of places of bad reduction.

\begin{theorem}\label{th:lang}
Let $K$ be a number field or a function field, and let $\phi(x, y)=(y, x+y^2+b)$.  Then for any $s\geq 1$, there exist $B\in\ZZ^{+}$ and $\epsilon>0$ such that if $b\in K$ is $s$-integral, and $P\in \affine^2(K)$, then either $P$ is periodic for $\phi$ of period at most $B$, or else
\[\hat{h}_\phi(P)\geq \epsilon \max\{h(b), 1\}.\]
\end{theorem}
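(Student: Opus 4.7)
The plan is a two-dimensional analogue of the Benedetto--Baker lower-bound arguments for univariate polynomial dynamics, using Kawaguchi's local canonical heights. I would begin by decomposing
\[\hat h_\phi(P)=\sum_{v}n_v\hat\lambda_v(P),\qquad \hat\lambda_v=\hat\lambda_v^++\hat\lambda_v^-,\]
where the local pieces are defined by $\hat\lambda_v^{\pm}(P)=\lim_{N\to\infty} 2^{-N}\log^+\max\{|x_N|_v,|y_N|_v\}$ along the forward and backward iterates $(x_N,y_N)=\phi^{\pm N}(P)$, and then analyzing each place separately.

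At a \emph{good} place $v$ (one with $|b|_v\le 1$), a direct argument based on the recurrence $y_{n+1}=y_{n-1}+y_n^2+b$ shows that $\hat\lambda_v^{\pm}(P)$ is essentially $\log^+\max\{|x|_v,|y|_v\}$; in particular $\hat\lambda_v(P)\ge 0$ with equality precisely on $v$-integral points. At each of the at most $s$ \emph{bad} places $v$ (those with $|b|_v>1$), the central claim is a local rigidity statement: there are $c_v>0$ and $B_v\in\ZZ^+$, depending on $v$-adic data but bounded uniformly in terms of $s$ alone, such that any $P\in\affine^2(K_v)$ with $\hat\lambda_v(P)<c_v\log|b|_v$ lies $v$-adically close to a periodic cycle of period at most $B_v$. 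Granting this, the product formula $h(b)=\sum_v n_v\log^+|b|_v$ lets me combine local estimates: if $\hat h_\phi(P)<\epsilon\,h(b)$ with $\epsilon$ small enough relative to the $c_v$, then $P$ must be close to a low-period orbit at every bad place, and the good-place analysis promotes this local periodicity to global periodicity of period at most $B:=\max_v B_v$. The case $h(b)<1$ is absorbed into the $\max\{h(b),1\}$ on the right-hand side.

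The main obstacle is the bad-place rigidity claim. In the one-variable polynomial case Baker's argument uses an explicit description of the non-archimedean filled Julia set together with a sharp escape-rate estimate outside it; for a H\'enon map the analogous object is the intersection of a forward and a backward filled Julia set, and the two-dimensional dynamics is genuinely more subtle. One has to control forward and backward escape rates simultaneously and show that the set of $v$-adic points whose orbits remain bounded in both time directions is a finite union of periodic cycles of controlled period. A further technical subtlety is uniformity of $B_v$ as $|b|_v\to\infty$, which is essential so that $B=\max_v B_v$ depends only on $s$. The explicit quadratic shape $f(y)=y^2+b$ — which for example restricts period-$\le 2$ points to lie on $y^2+b=0$ — is what makes this feasible here and produces the linear-in-$h(b)$ lower bound.
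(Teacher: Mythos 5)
Your high-level setup — decompose the canonical height into local pieces, treat good and bad places separately, and combine via the product formula — is the right starting point and matches the paper. But your ``central claim'' at bad places is not what the paper proves, and as stated it does not hold. You assert that any $P$ with small $\hat\lambda_v(P)$ must lie $v$-adically close to a periodic cycle of bounded period. In fact, for a non-archimedean place with $|b|_v>1$, the set of points whose orbit stays $v$-adically bounded (the local filled Julia set) is typically an uncountable Cantor-type set; it is not a finite union of periodic cycles, and a point in it need not be near a periodic cycle at all. What \emph{is} true (and what the paper's Lemma~\ref{lem:close to root} establishes) is the weaker statement that any such point has both coordinates $v$-adically close to square roots of $-b$; these square roots are just some points, not periodic points. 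So your proposed local rigidity lemma — the step you yourself flag as the main obstacle — is false in the form you need, and your plan to ``promote local periodicity to global periodicity'' has no foundation.

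The paper's proof takes a genuinely different route that avoids this difficulty. Rather than locating a nearby periodic cycle, it fixes a long orbit segment $P_{-M},\dots,P_M$ (distinct if $P$ has large enough period), and uses pigeonhole arguments at each bad place (Lemmas~\ref{lem:two-sided}--\ref{lem:constant x}) to extract a subset of indices whose $x$- and $y$-coordinates cluster tightly to a \emph{common} square root of $-b$. One then picks two indices $i\ne j$ in this subset and applies the product formula to the nonzero differences $x_i-x_j$ and $y_i-y_j$: the local estimates bound $\sum_v n_v\log|x_i-x_j|_v$, etc., below by $-O(2^M\hat h_\phi(P))-h(b)$, and the product formula forces this sum to be zero, yielding $h(b)\le 2^{M+2}\hat h_\phi(P)+O(1)$. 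The iteration count $M$ depends only on $s$ via a pigeonhole bookkeeping constant (the $B_{m,n}$ array). No local-to-global periodicity argument is needed, and no finiteness of the bad-place filled Julia set is required. If you want to repair your outline, the key substitution is: replace ``$P$ is close to a low-period cycle at $v$'' with ``many distinct iterates of $P$ are close to \emph{each other} at $v$,'' and then argue against the product formula applied to differences of those iterates.
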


We suspect that, as in \cite{ads_lang}, a simple modification of the proof will give a similar lower bound for the canonical heights associated to the maps $\phi(x, y)=(y, x+y^d+b)$.  It is reasonable to conjecture, of course, that the quantities $B$ and $\epsilon$ in Theorem~\ref{th:lang} can be made absolute.  In particular, one might expect that there is an absolute bound on the size of a periodic cycle for a H\'{e}non map of the form $\phi(x, y)=(y, x+y^2+b)$ over $\QQ$.
We present a precise conjecture here, along the lines of a similar conjecture for univariate quadratic polynomials due to Poonen \cite{poonen}.
\begin{conjecture}\label{conj}
Let $b\in\QQ$ and $\phi(x, y)=(y, x+y^2+b)$.  If $P\in \affine^2(\QQ)$ has period $N$ for $\phi$, then $N\in\{1, 2, 3, 4, 6, 8\}$.
\end{conjecture}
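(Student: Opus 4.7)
The plan is to follow the Morton--Silverman/Poonen strategy, adapted to the H\'{e}non setting: reduce the conjecture to the emptiness of the rational points on an explicit dynamical modular curve for each $N \notin \{1, 2, 3, 4, 6, 8\}$. Since even the univariate analogue (Poonen's conjecture for $z \mapsto z^2 + c$) is open, a complete proof appears out of reach with current methods, and the plan below aims primarily at a clean structural reduction and at identifying the obstruction.

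First, for each $N$, I would form the dynamical modular scheme $V_N \subset \affine^3_{\QQ}$ in coordinates $(b, x, y)$ cut out by the two equations expressing $\phi_b^N(x, y) = (x, y)$, and extract the primitive component $V_N^*$ by M\"obius inversion, i.e., via the H\'{e}non analogue of the dynatomic polynomial. After dividing by the order-$N$ symmetry $P \mapsto \phi_b(P)$ one obtains a modular scheme $Y_1(N)$, expected to be an irreducible curve for most $N$. The conjecture then reads $Y_1(N)(\QQ) = \emptyset$ for every $N \notin \{1, 2, 3, 4, 6, 8\}$.

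Second, for each small excluded $N$, say $N = 5, 7, 9, 10, 11, 12, \dots$, I would compute $Y_1(N)$ explicitly, produce its smooth projective model, determine its genus, and for genus at least $2$ apply Faltings' theorem to obtain finiteness of rational points. An explicit Chabauty or Mordell--Weil sieve computation would then enumerate $Y_1(N)(\QQ)$ and verify its emptiness (or show that any rational points come from strictly smaller-period orbits). The case $N = 5$ should already be a substantial computation, with complexity growing roughly like $2^N$ for subsequent $N$.

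Third, to cover all $N$ beyond some bound $N_0$, I would combine Theorem~\ref{th:lang} with Pezda-type reduction mod $p$ bounds. Any periodic $P$ has $\hat{h}_\phi(P) = 0$, yet Theorem~\ref{th:lang} forces $\hat{h}_\phi(P) \geq \epsilon(s)\max\{h(b), 1\}$ unless $P$ has period at most $B(s)$, where $s$ is the number of bad places of $b$. This yields a period bound uniform in each $s$, which combined with mod-$p$ constraints on primitive periods restricts the possible $N$ to a finite set in terms of $s$. The main obstacle is that Theorem~\ref{th:lang} does not control $B(s)$ and $\epsilon(s)$ as functions of $s$, and a Morton--Silverman-style uniform bound on the primitive period for H\'{e}non maps over $\QQ$ is not available; extracting a single $N_0$ valid for every $b \in \QQ$ is therefore the essential missing ingredient, which is why the author resorts to computational verification rather than a proof.
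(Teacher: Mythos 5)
This statement is explicitly labeled a conjecture in the paper, and the author does not prove it; the paper only verifies it computationally for $H(b)\leq 1000$ (Proposition~\ref{prop:comp}) via a Pezda-style reduction argument (Lemma~\ref{lem:local to global}) combined with a height bound (Lemma~\ref{lem:heights used for computations}). You correctly recognize that the conjecture is open and, indeed, at least as hard as Poonen's univariate analogue, so there is nothing in the paper against which a ``proof'' can be checked. Your sketched strategy nonetheless aligns closely with the author's own remarks at the end of Section~\ref{sec:computations}: the author introduces the parametrizing curves $\Gamma_N\subseteq\affine^3$ and their primitive components $\Gamma_N'$, notes that the normalization of $\Gamma_5'$ has genus $14$ with several quotients, and suggests Chabauty--Coleman as a possible tool --- exactly your Steps~1 and~2. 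Your Step~3 (combining Theorem~\ref{th:lang} with Pezda-type mod-$p$ bounds to handle all large $N$) is also where you honestly identify the essential gap, and it is the right gap to flag: Theorem~\ref{th:lang} makes $B$ and $\epsilon$ depend on the number $s$ of non-integral places of $b$, which is unbounded over $\QQ$, so no single $N_0$ is forthcoming, and a Morton--Silverman-type uniform bound is not available in the H\'{e}non setting. In short, your proposal is a faithful, correctly-hedged account of the state of the problem; just be careful not to present the genus computation and Chabauty step as routine, since already for $N=5$ the author describes this as a topic for future work rather than a done computation.
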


It is possible to construct examples of points of each of these periods, and the only one that presents any computational difficulty is $N=8$.  For example, the map
$\phi(x, y)=(y, x+y^2-9/16)$ has two fixed points, $P_1=(3/4, 3/4)$ and $P_2=(-3/4, -3/4)$, a point $P_3=(3/4, -3/4)$ of period 2, and a point  $P=(1/4, -3/4)$ of period 8.  

  Although it seems likely that Conjecture~\ref{conj} is at least as difficult to prove as Poonen's Conjecture, which remains open, we use techniques similar to those used by Hutz and the author~\cite{i-hutz} (and based on the aforementioned work of Pezda \cite{pezda}) to show that it holds in at least the first million cases.
\begin{prop}\label{prop:comp}
Let $b\in\QQ$ with $H(b)\leq 1000$, and $\phi(x, y)=(y, x+y^2+b)$.  If $P\in \affine^2(\QQ)$ has period $N$ for $\phi$, then $N\in\{1, 2, 3, 4, 6, 8\}$.
\end{prop}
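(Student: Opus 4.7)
The plan is to carry out a finite computer search along the lines of \cite{i-hutz}, whose core tool is a Pezda-type restriction on cycle lengths at primes of good reduction. First, I would enumerate the roughly $10^6$ values of $b\in\QQ$ with $H(b)\leq 1000$. For each such $b$, writing $b=u/v$ in lowest terms, the primes of bad reduction for $\phi=\phi_b$ are exactly the prime divisors of $v$, so every small prime $p\nmid v$ is available as a good prime. For a handful of such $p$ I would tabulate the cycle structure of $\overline\phi$ acting on $\FF_p^2$ and record the multiset of cycle lengths.

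Next, apply the Pezda-style bound from \cite{i-hutz}: if $P\in\affine^2(\QQ)$ has exact period $N$ for $\phi_b$, then at each good prime $p$ one has $N = m_p\cdot \ell_p\cdot p^{e_p}$, where $m_p$ is the $\overline\phi$-period of the reduction $\overline P$, the integer $\ell_p$ divides a small explicit polynomial in $p$ controlled by the eigenvalues of the Jacobian of $\overline\phi^{m_p}$ at $\overline P$, and $e_p$ is bounded by an explicit constant. Intersecting these divisibility conditions over two or three small good primes collapses the list of admissible $N$ to a very short one; in the overwhelming majority of cases this immediately forces $N\in\{1,2,3,4,6,8\}$.

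For each $b$ and each surviving $N$ outside $\{1,2,3,4,6,8\}$, one must still verify that no $\QQ$-rational point of exact period $N$ exists for $\phi_b$. Here every candidate point must reduce at each good prime to a point on a cycle of the prescribed length, which yields a finite explicit list of residues; combining this with the height bound on periodic points coming from \cite{silverman} (or its sharpening in Theorem~\ref{th:lang}) confines the search to a finite explicit set of candidate $P$, each of which is tested by direct iteration of $\phi_b$.

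The main obstacle will be the occasional value of $b$ whose reductions happen to carry many short cycles, inflating the list of admissible $N$ and forcing either additional primes into the sieve or a larger direct search. Efficient bookkeeping---grouping values of $b$ sharing the same set of bad primes so that reduction tables can be reused, and caching iterates of $\phi_b$ modulo small $p$---keeps the total computation manageable, and once this engineering is in place the verification is a finite, mechanical check.
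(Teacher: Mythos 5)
Your approach matches the paper's: a Pezda-type sieve at good primes, followed by a direct search over points of bounded height for the residual cases where the sieve fails to collapse the list of admissible $N$. Two substantive remarks. First, the paper's Pezda lemma (Lemma~\ref{lem:local to global}) is sharper than the version you state: for $p\geq 5$ it gives $N = dM$ with $d$ dividing the order of $\Lambda_{\widetilde\phi}(\widetilde Q)$ in $\SL_2(\FF_p)$, with \emph{no} additional factor $p^{e_p}$ at all. The determinant-one condition on the Jacobian of a H\'enon map, together with $p\geq 5$, is used in the paper's proof to rule out period $p$ completely, so the sieve produces fewer spurious candidates than your $m_p\cdot\ell_p\cdot p^{e_p}$ formulation would. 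Second, and more importantly, your fallback citation is wrong: Theorem~\ref{th:lang} gives a \emph{lower} bound on $\hat h_\phi(P)$ for \emph{non-periodic} $P$, and says nothing about the Weil height of periodic points (which satisfy $\hat h_\phi(P)=0$, where that theorem is vacuous). The height bound you actually need for the direct search is the explicit one in Lemma~\ref{lem:heights used for computations}, $h(Q)\leq \tfrac12 h(b)+\log 3$, which follows from Lemma~\ref{lem:close to root}. That same lemma also shows the denominator of $b$ must be a perfect square, which cuts the set of parameters to check from roughly $T^2$ to roughly $T^{3/2}$; you will want this reduction, as the naive enumeration is substantially larger.
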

Note that $H$ here is the mutliplicative height, defined by $H(n/m)=\max\{|n|, |m|\}$, for coprime integers $n$ and $m$.

We note one final conjecture, and one partial result, on the specialization of families of H\'{e}non maps.  Theorem~\ref{th:silverman} shows that the set of parameters $t$ at which a family $(\phi, P)$ becomes periodic is a set of bounded height, but it seems likely that this set is still infinite over the algebraic closure of the base field.  If one considers a family $\phi$ over a curve $C$, and two orbits which do not intersect, it seems unlikely that these orbits would coincide on infinitely many fibres.  We posit that the following statement holds, where
\[\Ocal_\phi(P)=\{\phi^N(P):N\in\ZZ\}\]
is the \emph{orbit of $P$ under $\phi$}.
\begin{conjecture}
Suppose that $\phi(x, y)$ is a H\'{e}non map over $F=K(C)$, and that $P, Q\in\affine^2(F)$ have distinct orbits under $\phi$.  Then there exist only finitely many $t\in C(\overline{K})$ such that $\mathcal{O}_{\phi_t}(P_t)=\mathcal{O}_{\phi_t}(Q_t)$.
\end{conjecture}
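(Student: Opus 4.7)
My plan is to decompose the set in question as $S = \bigcup_{N \in \ZZ} S_N$, where $S_N = \{t \in C(\overline{K}) : \phi_t^N(P_t) = Q_t\}$. The distinct-orbits hypothesis says that $\phi^N(P) \neq Q$ as points of $\affine^2(F)$ for every $N \in \ZZ$; hence each $S_N$ is cut out by the vanishing of a non-zero rational function on $C$ and is a finite subset of $C(\overline{K})$. The conjecture therefore reduces to showing that only finitely many of the sets $S_N$ are non-empty.

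To control the admissible exponents $N$, I would exploit the functional equations $\hat{h}^+_{\phi_t}(\phi_t^N(P_t)) = d^N \hat{h}^+_{\phi_t}(P_t)$ and $\hat{h}^-_{\phi_t}(\phi_t^N(P_t)) = d^{-N} \hat{h}^-_{\phi_t}(P_t)$. When $\phi_t^N(P_t)=Q_t$, these give
\[d^N = \frac{\hat{h}^+_{\phi_t}(Q_t)}{\hat{h}^+_{\phi_t}(P_t)}, \qquad d^{-N} = \frac{\hat{h}^-_{\phi_t}(Q_t)}{\hat{h}^-_{\phi_t}(P_t)},\]
provided the denominators do not vanish. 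In the generic case $\hat{h}^\pm_\phi(P),\hat{h}^\pm_\phi(Q) > 0$, Theorem~\ref{th:tate} makes each of $\hat{h}^\pm_{\phi_t}(P_t), \hat{h}^\pm_{\phi_t}(Q_t)$ comparable to $h(t)$ once $h(t)$ is large, simultaneously bounding $d^N$ and $d^{-N}$, and hence $|N|$, by an absolute constant. In the degenerate case $\hat{h}_\phi(P)=0$, Theorem~\ref{th:benedetto} (applied in the non-isotrivial case) makes $P$ periodic on the generic fibre with some period $n$, so $\phi^N(P)$ depends only on $N \bmod n$ in $\affine^2(F)$ and $\bigcup_N S_N$ collapses to the finite union $\bigcup_{0 \leq k < n} S_k$; the case $\hat{h}_\phi(Q)=0$ is symmetric, and the isotrivial case is handled separately via a constant-coefficient model.

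The main obstacle lies in the bounded-height regime $h(t) \leq M$, for over $\overline{K}$ this is far from a finite set, and the canonical-height argument above only constrains $|N|$ once $h(t)$ is large. Here one is essentially confronted with a dynamical unlikely-intersection problem, closely related to the Ghioca--Tucker--Zhang conjecture and Zhang's dynamical Manin--Mumford conjecture. In the polarizable setting one would appeal to equidistribution of points of small canonical height (Yuan, Baker--Rumely) combined with the strategy of Baker--DeMarco, but Hénon maps are not polarizable on any projective completion of $\affine^2$, and so that machinery does not apply directly. I would therefore aim to assemble an adelic system of metrics (or invariant measures) out of Kawaguchi's local height functions, establish an equidistribution statement for sequences of points of small Kawaguchi height, and then conclude that the limit measures attached to $\{\phi^N(P)\}$ and $\{\phi^N(Q)\}$ can coincide only if the generic orbits do---contradicting the hypothesis. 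Carrying out this equidistribution program for non-polarizable Hénon maps is, to my mind, the essential difficulty, and the reason the statement is posed only as a conjecture.
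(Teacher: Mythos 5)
The statement you were asked to prove is posed only as a \emph{conjecture} in the paper; no proof is offered, and you have correctly diagnosed both the natural reduction and the genuine open obstacle. Your decomposition into the sets $S_N$, each finite by the distinct-orbits hypothesis, your use of the functional equations $\hat{h}^{\pm}_{\phi_t}(\phi_t^N(P_t))=d^{\pm N}\hat{h}^{\pm}_{\phi_t}(P_t)$ together with Theorem~\ref{th:tate} to bound $|N|$ once $h(t)$ is large, and your handling of the degenerate case $\hat{h}_\phi(P)=0$ via Theorem~\ref{th:benedetto} are all sound. Your observation that the bounded-height regime over $\overline{K}$ is what blocks the argument --- and its kinship with dynamical unlikely intersections and the lack of equidistribution machinery for non-polarizable maps --- is precisely why the statement remains conjectural.

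For comparison, the paper's partial result, Theorem~\ref{th:unlikely}, proves finiteness only for $t\in C(K)$ (not $\overline{K}$) that are $s$-integral with respect to the pole divisor of $b$, and only for the family $\phi(x,y)=(y,x+y^2+b)$; its proof takes a route different from your ratio argument. Writing $m=m_1+m_2$ and $R=\phi^{m_1}_t(P_t)$ when $\phi_t^m(P_t)=Q_t$, one combines $\hat{h}^+_{\phi_t}(R)=d^{-m_2}\hat{h}^+_{\phi_t}(Q_t)$ and $\hat{h}^-_{\phi_t}(R)=d^{-m_1}\hat{h}^-_{\phi_t}(P_t)$ with Theorem~\ref{th:tate} to obtain $0<\hat{h}_{\phi_t}(R)\leq d^{-\min\{m_1,m_2\}}\bigl(\hat{h}^+_\phi(Q)+\hat{h}^-_\phi(P)\bigr)h(t)+O(h(t)^{1/2})$, and then contradicts the Lang-type lower bound of Theorem~\ref{th:lang}. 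The integrality and $K$-rationality hypotheses are exactly what lets one invoke Theorem~\ref{th:lang}, yielding a uniform contradiction at \emph{all} parameter heights rather than only large ones --- but at the price of those restrictive hypotheses. Both your reduction and the paper's midpoint argument leave the full conjecture over $\overline{K}$ open.
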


To give some evidence of this conjecture, we prove the following weak form of the statement for quadratic H\'{e}non maps, where we require infinitely many parameters rational over the ground field, and integral with respect to a certain divisor.

\begin{theorem}\label{th:unlikely}
Let $F=K(C)$, for $C/K$ a curve and $K$ a number field, let $b\in F$ with pole divisor $\eta\in \Div(C)$, let $\phi(x, y)=(y, x+y^2+b)$, and let $P, Q\in \affine^2(F)$ have distinct orbits under $\phi$. For any $s\geq 1$, there  exist only finitely many $t\in C(K)$, $s$-integral with respect to $\eta$, such that $\Ocal_{\phi_t}(P_t)=\Ocal_{\phi_t}(Q_t)$.
\end{theorem}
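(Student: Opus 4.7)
The plan is to argue by contradiction: suppose there exist infinitely many $s$-integral $t_k\in C(K)$ with $\Ocal_{\phi_{t_k}}(P_{t_k})=\Ocal_{\phi_{t_k}}(Q_{t_k})$, and choose $n_k\in\ZZ$ so that $Q_{t_k}=\phi_{t_k}^{n_k}(P_{t_k})$. First I would carry out some reductions. If $P$ (or $Q$) is generically periodic under $\phi$ of period $m$, then $n_k$ may be taken in $\{0,1,\dots,m-1\}$, and for each such $n$ the identity $Q_t=\phi_t^n(P_t)$ fails on the generic fibre (distinct orbits) and so cuts out a proper Zariski-closed, hence finite, subset of $C$. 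Otherwise Theorem~\ref{th:silverman}, applied in the non-isotrivial case, forces the $t\in C(K)$ on which $P_t$ or $Q_t$ becomes periodic for $\phi_t$ to lie in a bounded-height subset of $C(K)$, which by Northcott is finite, and these $t_k$ can be discarded.

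The heart of the argument is to show that $n_k$ remains bounded. From the definitions in the introduction one has $\hat{h}^{\pm}_{\phi_t}(\phi_t(R))=d^{\pm 1}\hat{h}^{\pm}_{\phi_t}(R)$ for any point $R$, and iterating gives
\[
\hat{h}_{\phi_{t_k}}(Q_{t_k})=d^{n_k}\hat{h}^{+}_{\phi_{t_k}}(P_{t_k})+d^{-n_k}\hat{h}^{-}_{\phi_{t_k}}(P_{t_k}).
\]
Applying Theorem~\ref{th:tate} componentwise to each of these three canonical heights, dividing by $h(t_k)$, and letting $k\to\infty$ (legitimate since infinitely many distinct $t_k\in C(K)$ force $h(t_k)\to\infty$ by Northcott) yields
\[
\hat{h}_\phi(Q)=\lim_{k\to\infty}\bigl(d^{n_k}\hat{h}^{+}_\phi(P)+d^{-n_k}\hat{h}^{-}_\phi(P)\bigr).
\]
By Theorem~\ref{th:benedetto}, $\hat{h}_\phi(Q)>0$ and $\hat{h}^{+}_\phi(P)+\hat{h}^{-}_\phi(P)=\hat{h}_\phi(P)>0$, so a short case analysis (the function $n\mapsto d^{n}A+d^{-n}B$ is proper on $\ZZ$ whenever $A,B\geq 0$ and $A+B>0$) shows that $n_k$ must stay bounded.

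Once $n_k$ takes only finitely many integer values, each such $n$ contributes a proper Zariski-closed locus $\{t\in C:Q_t=\phi_t^n(P_t)\}$, and taking the union over these finitely many values gives a finite set containing every $t_k$, a contradiction. The main technical obstacle is the isotrivial case, in which neither Theorem~\ref{th:benedetto} nor Theorem~\ref{th:silverman} is directly available to produce $\hat{h}_\phi(P),\hat{h}_\phi(Q)>0$ on the generic fibre; here I expect the $s$-integrality hypothesis (together with the observation that after conjugation the isotrivial family becomes constant over a finite extension) to be essential, letting one invoke Theorem~\ref{th:lang} fibrewise for a uniform positive lower bound on $\hat{h}_{\phi_{t_k}}(P_{t_k})$ and on $\hat{h}_{\phi_{t_k}}(Q_{t_k})$, and then rerun the same bounding-of-$n_k$ argument at the level of specializations.
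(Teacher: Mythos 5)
There is a genuine gap at the heart of your argument, in the step where you claim to derive
\[
\hat{h}_\phi(Q)=\lim_{k\to\infty}\bigl(d^{n_k}\hat{h}^{+}_\phi(P)+d^{-n_k}\hat{h}^{-}_\phi(P)\bigr)
\]
by dividing the fibrewise identity by $h(t_k)$ and invoking Theorem~\ref{th:tate}. What Theorem~\ref{th:tate} actually gives is $\hat{h}^{\pm}_{\phi_{t_k}}(P_{t_k})=\hat{h}^{\pm}_\phi(P)\,h(t_k)+O\bigl(h(t_k)^{1/2}\bigr)$, with the implied constant uniform in $k$ but independent of $n_k$. When you plug this into $d^{n_k}\hat{h}^{+}_{\phi_{t_k}}(P_{t_k})+d^{-n_k}\hat{h}^{-}_{\phi_{t_k}}(P_{t_k})$ and divide by $h(t_k)$, the error term becomes $O\bigl((d^{n_k}+d^{-n_k})h(t_k)^{-1/2}\bigr)$, which need not tend to zero unless $n_k$ is already bounded. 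In other words, the passage from ``$\hat{h}^{+}_{\phi_{t_k}}(P_{t_k})/h(t_k)\to\hat{h}^{+}_\phi(P)$'' to ``$d^{n_k}\hat{h}^{+}_{\phi_{t_k}}(P_{t_k})/h(t_k)\to d^{n_k}\hat{h}^{+}_\phi(P)$'' silently assumes boundedness of $n_k$, which is exactly what you are trying to prove. (Separately, your parenthetical claim that $n\mapsto d^{n}A+d^{-n}B$ is proper on $\ZZ$ whenever $A,B\geq 0$ and $A+B>0$ is false when exactly one of $A,B$ vanishes; the case analysis can be rescued \emph{if} the limit identity were valid, but it is not.) The concrete scenario that breaks the argument is $\hat{h}^{+}_\phi(P)=0$ and $\hat{h}^{-}_\phi(Q)=0$ with $n_k\to\infty$: there Theorem~\ref{th:tate} only gives the upper bound $\hat{h}^{+}_{\phi_{t_k}}(P_{t_k})=O(h(t_k)^{1/2})$, and no contradiction arises from the asymptotics alone.

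The paper avoids this by a different decomposition: instead of comparing $\hat{h}_{\phi_t}(Q_t)$ with $d^{n}\hat{h}^{+}_{\phi_t}(P_t)+d^{-n}\hat{h}^{-}_{\phi_t}(P_t)$, it introduces an intermediate point $R=\phi_t^{m_1}(P_t)$ with $m_1+m_2=n_k$ and $m_1\approx m_2$, so that $\hat{h}_{\phi_t}(R)\leq d^{-\min(m_1,m_2)}\bigl(\hat{h}^{+}_\phi(Q)+\hat{h}^{-}_\phi(P)\bigr)h(t)+O(h(t)^{1/2})$ --- here the growth factor $d^{n_k}$ never appears because one only ever scales heights \emph{down}. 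That upper bound is then pitted against Theorem~\ref{th:lang}, which gives a fibrewise lower bound $\hat{h}_{\phi_t}(R)\geq\epsilon\max\{h(b_t),1\}$ proportional to $h(t)$ (using the $s$-integrality and the non-constancy of $b$), yielding a contradiction once $n_k$ is large. So Theorem~\ref{th:lang} is not just a fallback for the isotrivial case as your plan suggests --- it is the essential input that supplies the uniform fibrewise lower bound which your specialization-theorem-only argument cannot produce. Your reductions and your treatment of the generically-periodic case are fine, and the algebraic identity $\hat{h}_{\phi_{t_k}}(Q_{t_k})=d^{n_k}\hat{h}^{+}_{\phi_{t_k}}(P_{t_k})+d^{-n_k}\hat{h}^{-}_{\phi_{t_k}}(P_{t_k})$ is correct, but you need the Lang-type bound to control $n_k$.
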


In Section~\ref{sec:preliminaries} we set out the basic tools, namely local heights, needed for the proofs of the main results.
In Section~\ref{sec:benedetto proof} we prove Theorem~\ref{th:benedetto}, and in Section~\ref{sec:lang proof} we prove Theorem~\ref{th:lang}; the proofs are separate, but rely on similar ideas.  We treat Theorem~\ref{th:tate} in Section~\ref{sec:tate proof}, relying heavily on material from \cite{variation}, and in Section~\ref{sec:applications} we turn our attention to the proofs of Theorems~\ref{th:silverman} and \ref{th:unlikely}.  Finally, Section~\ref{sec:computations} is devoted to describing the computations need to verify Proposition~\ref{prop:comp}, and here we also undertake an initial investigation of the curves  parametrizing quadratic H\'{e}non maps  with a marked point of period $N$.  In the arXiv version of this paper, an appendix contains the Pari/GP code necessary to verify Proposition~\ref{prop:comp}.


\section{Local heights and other preliminaries}\label{sec:preliminaries}

In this section we set out a theory of local heights for H\'{e}non maps.  It should be noted that local heights for regular affine automorphisms have already been considered by Kawaguchi \cite{kawaguchi2}; although the results of \cite{kawaguchi2} are considerably more general than those developed in this section, the special case in which we find ourselves affords a greater level of specificity.

Throughout this section we will assume that $K$ is a field with a valuation $v$, which might be archimedean or non-archimedean.  We will also fix a monic polynomial
\[f(z)=z^d+b_{d-1}z^{d-1}+\cdots+b_0\in K[z],\]
and consider the H\'{e}non map $\phi(x, y)=(ay, x+f(y))$, for some fixed $a\in K^*$.

  If $r\in\RR$, we will set
\[(r)_v=\begin{cases}r & \text{if }v\text{ is archimedean}\\ 1& \text{otherwise}.\end{cases}\]
We take
$\|x, y\|_v=\max\{|x|_v, |y|_v\}$,
and define local canonical heights for $\phi$ by the limits
\begin{gather*}
\hat{\lambda}^{+}_{v, \phi}(P)=\lim_{N\to\infty}d^{-N}\log^+\|\phi^N(P)\|_v\\
\hat{\lambda}^{-}_{v, \phi}(P)=\lim_{N\to\infty}d^{-N}\log^+\|\phi^{-N}(P)\|_v.
\end{gather*}
That these limits exist follows from the work of Kawaguchi \cite{kawaguchi2}, although we prove this again below.  For convenience, we will also set $\hat{\lambda}_{v, \phi}(P)=\hat{\lambda}_{v, \phi}^+(P)+\hat{\lambda}_{v, \phi}^-(P)$.

Our first lemma describes the basic properties of these local height functions.  In order to state the lemma, we set, 
for any monic polynomial $f(z)=z^d+b_{d-1}z^{d-1}+\cdots+b_0$ with coefficients in $K$, 
\[C_{f, v}=\max_{0\leq i<d}\{|b_i|_v^{1/(d-i)}, 1\}.\]
Given $\phi$ as above, we let
\[\basin^+_v(\phi)=\left\{(x, y)\in \affine^2(K):|y|_v>(d+2)_v\max\{|x|_v^{1/d},  C_{f, v}, |a|_v^{1/(d-1)}\}\right\},\]
and
\[\basin^-_v(\phi)=\left\{(x, y)\in \affine^2(K):|a^{-1}x|_v>(d+2)_v\max\{|y|_v^{1/d}, C_{f, v}, |a|_v^{1/(d-1)}\}\right\}.\]

\begin{lemma}\label{lem:local heights}
Let $\phi$, $\basin^+_v(\phi)$, and $\basin^-_v(\phi)$ be as defined above.
\begin{enumerate} 
\item The set $\basin_v^{+}(\phi)$ is closed under the action of $\phi$.
\item The limit defining $\hat{\lambda}_\phi^{+}(P)$ exists, for all $P$, and the function satisfies $\hat{\lambda}_\phi^{+}(\phi(P))=d\hat{\lambda}_\phi^{+}(P)$.
\item For all $P=(x, y)\in\basin^{+}_v(\phi)$, we have
\[\hat{\lambda}^+_{v,\phi}(P)=\log|y|_v+\epsilon^+(b, P, v)\] 
where $\epsilon^+=0$ if $v$ is non-archimedean, and \[-\frac{1}{d-1}\log(d+2)\leq \epsilon^+\leq \frac{1}{d-1}\log \left(\frac{2d+3}{d+2}\right)\] otherwise.
\item We have $\hat{\lambda}_{v, \phi}^+(P)=0$ if and only if there is no $N$ with $\phi^N(P)\in \basin_v^+(\phi)$.
\item The set $\basin_v^{-}(\phi)$ is closed under the action of $\phi^{-1}$.
\item  The limit defining $\hat{\lambda}_\phi^{-}(P)$ exists, for all $P$, and the function satisfies $\hat{\lambda}_\phi^{-}(\phi^{-1}(P))=d\hat{\lambda}_\phi^{-}(P)$.
\item For all $P=(x, y)\in\basin^{-}_v(\phi)$, we have
\[\hat{\lambda}^-_{v,\phi}(P)=\log|x|_v-\frac{d}{d-1}\log|a|_v+\epsilon^-(b, P, v)\] 
where $\epsilon^-=0$ if $v$ is non-archimedean, and \[-\frac{1}{d-1}\log(d+2)\leq \epsilon^-\leq \frac{1}{d-1}\log \left(\frac{2d+3}{d+2}\right)\] otherwise.
\item We have $\hat{\lambda}_{v, \phi}^-(P)=0$ if and only if there is no $N$ with $\phi^{-N}(P)\in \basin_v^-(\phi)$.
\end{enumerate}
\end{lemma}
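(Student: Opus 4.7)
The statement splits into two mirror halves---(1)--(4) for the forward basin and (5)--(8) for the backward---so the plan is to prove (1)--(4) in detail and deduce (5)--(8) by the same arguments applied to the inverse $\phi^{-1}(x,y) = (y - f(x/a),\, x/a)$; the factor $a^{-1}$ in the definition of $\basin^-_v(\phi)$ is precisely what converts the forward statements for $\phi^{-1}$ (which has the same structural form after swapping coordinates and replacing $a$ by $a^{-1}$) into the backward statements for $\phi$.

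For (1), I would verify the defining inequality by direct calculation. Writing $(x',y') = \phi(x,y) = (ay,\, x+f(y))$, the hypothesis $|y|_v > (d+2)_v C_{f,v}$ yields $|b_i y^i|_v \le (d+2)_v^{-(d-i)} |y|_v^d$, whence $|f(y)|_v$ is comparable to $|y|_v^d$ (equal to $|y|_v^d$ exactly in the non-archimedean case). Combined with $|y|_v^d > (d+2)_v^d |x|_v$, this forces $|y'|_v$ to be comparable to $|y|_v^d$, and the three conditions comprising ``$(x', y') \in \basin^+_v(\phi)$'' reduce to the input inequalities raised to the $d$-th power.

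Parts (2)--(4) hinge on a single-step estimate: for $P = (x,y) \in \basin^+_v(\phi)$ one has $\bigl| \log |y'|_v - d\log|y|_v \bigr| \le \gamma$, with $\gamma = 0$ in the non-archimedean case and an explicit $\gamma$ otherwise. Since the basin is forward-invariant by (1), telescoping this estimate against the geometric factor $\sum_{N \ge 0} d^{-N-1} = 1/(d-1)$ proves (2) and (3) for orbits starting in the basin, with the claimed bounds on $\epsilon^+$; the upper bound $\tfrac{1}{d-1}\log\tfrac{2d+3}{d+2}$ and lower bound $-\tfrac{1}{d-1}\log(d+2)$ emerge by refining the archimedean estimate $|f(y) - y^d|_v \le |y|_v^d / (d+1)$ together with $|x|_v < (d+2)^{-d}|y|_v^d$. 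For $P$ whose forward orbit eventually enters the basin, the functional equation $\hat\lambda_{v,\phi}^+(\phi(P)) = d\,\hat\lambda_{v,\phi}^+(P)$ propagates the limit and its positivity back to $P$. For $P$ whose forward orbit never meets the basin, I would show that $\|\phi^N(P)\|_v$ stays uniformly bounded in $N$, which forces $d^{-N}\log^+\|\phi^N(P)\|_v \to 0$ and gives both the existence of the limit in (2) and the remaining direction of (4).

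The main obstacle is this last boundedness claim. My plan is a one-step orbit analysis: outside the basin one has $|y_N|_v \le (d+2)_v\max\{|x_N|_v^{1/d}, C_{f,v}, |a|_v^{1/(d-1)}\}$, so whenever $|x_N|_v$ is large the coordinate $|y_N|_v$ is much smaller; under $\phi$ the new $x$-coordinate $ay_N$ then shrinks to roughly $|x_N|^{1/d}$, while the new $y$-coordinate $x_N + f(y_N)$ stays of size $|x_N|$, and the ratio $|y_{N+1}|_v / |x_{N+1}|_v^{1/d}$ is forced to exceed $(d+2)_v$ unless $|x_N|_v$ is bounded by an explicit threshold depending only on $|a|_v$ and $C_{f,v}$. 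Hence any orbit avoiding $\basin^+_v(\phi)$ must have both coordinates bounded by this threshold, and the parallel one-step analysis for $\phi^{-1}$ yields (5)--(8).
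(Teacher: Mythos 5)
Your treatment of parts (1)--(3) matches the paper's in substance: verify forward-invariance of $\basin^+_v(\phi)$ by the direct calculation, obtain the one-step estimate $|\log|y'|_v - d\log|y|_v| \le \gamma$, and telescope. The reduction of (5)--(8) to (1)--(4) via $\phi^{-1}$ is also how the paper proceeds (it says the proofs are ``essentially the same''). The trouble is in (4), specifically your argument for the boundedness of an orbit that never enters the basin.

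You claim that if $|x_N|_v$ is large then ``the new $y$-coordinate $x_N + f(y_N)$ stays of size $|x_N|$,'' and hence $P_{N+1} \in \basin^+_v(\phi)$, a contradiction. But when $P_N \not\in \basin^+_v(\phi)$ the bound $|y_N|_v \le (d+2)_v\max\{|x_N|_v^{1/d}, C_{f,v}, |a|_v^{1/(d-1)}\}$ only gives $|f(y_N)|_v \lesssim |y_N|_v^d \lesssim |x_N|_v$ up to explicit constants; it does not prevent $f(y_N)$ from having magnitude comparable to $|x_N|_v$, and when it does, $|x_N + f(y_N)|_v$ can be arbitrarily small by cancellation (archimedean or not). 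So the conclusion ``$|y_{N+1}|_v/|x_{N+1}|_v^{1/d}$ is forced to exceed $(d+2)_v$'' is not established, and the contradiction does not follow: it is perfectly consistent with the basin-avoidance hypothesis for $|x_N|_v$ to be large at some step, with $|y_{N+1}|_v$ small because of cancellation.

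The paper never argues by contradiction via $y_{N+1}$. Instead it substitutes the identity $x_N = a y_{N-1}$ directly into the basin-avoidance inequality to produce a self-contained recursion on the single sequence $(|y_N|_v)_{N\ge 0}$:
\[
|y_N|_v \le \max\{|a y_{N-1}|_v^{1/d}, C_{f,v}, |a|_v^{1/(d-1)}\} \le \max\{|y_{N-1}|_v, 1\}^{1/d}\, C_{f,v}\, \max\{|a|_v^{1/(d-1)}, 1\},
\]
which telescopes to $|y_N|_v \le \max\{|y_0|_v,1\}^{1/d^N} C_{f,v}^{d/(d-1)}\max\{|a|_v^{1/(d-1)},1\}^{d/(d-1)}$, an explicit bound independent of $N$. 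This is immune to cancellation in $x_N + f(y_N)$ because it never looks at $y_{N+1}$ at all --- it only uses that each $P_N$ fails the basin condition and that the $x$-coordinate is the previous $y$-coordinate scaled by $a$. I'd suggest replacing your one-step contradiction argument with this bootstrap.
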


\begin{proof}
We start with the case that $v\in M_K$ is non-archimedean.
Suppose  that $P=(x, y)\in\basin^{+}_v(\phi)$.  We have
$|y^d|_v> |x|_v$, and $|y^d|_v> |b_iy^i|_v$ for all $i<d$, and so
\[|x+f(y)|_v=|y|_v^d>|ay|_v,\]
and so $\|\phi(P)\|_v=|y|_v^d$.
At the same time,
\[|y|_v^d>|y|_v>\max\{C_{f, v}, |a|_v^{1/(d-1)}\}\]
and
\[|y|_v^d\geq|y|_v^{d/(d-1)}>|ay|_v^{1/(d-1)},\]
and so $\phi(P)\in\basin_v^+(\phi)$.  Thus, $\basin_v^+(\phi)$ is closed under the action of $\phi$, and by induction we see that $P\in \basin_v^+(\phi)$ implies
\[\|\phi^N(P)\|_v=|y|^{d^N}\]
for all $N\geq 1$.
This shows that $\hat{\lambda}^+_{v, \phi}(P)=\log|y(P)|_v$ if $P\in\basin_v^+(\phi)$.  It also follows that the limit defining $\hat{\lambda}^+_{v, \phi}(P)$ exists whenever $\phi^N(P)\in\basin_v^+(\phi)$ for some $N\geq 0$.

If, on the other hand, there is no $N\geq 0$ such that $\phi^N(P)\in\basin_v^+(\phi)$, then write $\phi^N(P)=(x_N, y_N)$.
For each $N\geq 1$ we have
\begin{eqnarray*}
|y_N|_v&\leq& \max\{|x_N|_v^{1/d},  C_{f, v}, |a|_v^{1/(d-1)}\}\\
&= &\max\{|ay_{N-1}|_v^{1/d}, C_{f, v}, |a|_v^{1/(d-1)}\}\\
&\leq &\max\{|y_{N-1}|_v, 1\}^{1/d}\cdot C_{f, v}\cdot \max\{|a|_v^{1/(d-1)}, 1\}\\
&\leq&\max\{|y_0|_v, 1\}^{1/d^N}\cdot C_{f, v}^{1+\frac{1}{d}+\cdots+\frac{1}{d^{N-1}}}\cdot \max\{|a|_v^{1/(d-1)}, 1\}^{1+\frac{1}{d}+\cdots+\frac{1}{d^{N-1}}}\\
&\leq & \max\{|y_0|_v, 1\}^{1/d^N}\cdot C_{f, v}^{d/(d-1)}\cdot \max\{|a|_v^{1/(d-1)}, 1\}^{d/(d-1)}
\end{eqnarray*}
In particular, \[\lim_{N\to\infty}d^{-N}\log^+|y_N|=0,\] and since $x_N=ay_{N-1}$, we have the same for $x_N$.  Consequently, under the hypothesis that there is no $N\geq 0$ with $\phi^N(P)\in \basin^+_v(\phi)$, we have $\hat{\lambda}^+_{v, \phi}(P)=0$. 

In the case of an archimedean valuation $v$, the arguments are similar.  In particular, if $P=(x, y)\in\basin^{+}_v(\phi)$, then $|b_iy^i|_v\leq \frac{1}{d+2}|y^d|$ for all $i<d$, and $|x|_v\leq \frac{1}{d+2}|y^d|_v$, and hence
\[\left(1-\frac{d+1}{d+2}\right)|y|_v^d\leq |x+f(y)|_v\leq \left(1+\frac{d+1}{d+2}\right)|y|_v^d.\]
We also have
\[|ay|_v\leq \left(\frac{1}{d+2}\right)^{d-1}|y|_v^d\leq |x+f(y)|_v,\]
and
\[|y|_v<\left(\frac{1}{d+2}\right)|y|_v^2\leq |x+f(y)|_v,\]
from which we conclude both that $\basin_v^+(\phi)$ is closed under the action of $\phi$, and that $\|\phi(x, y)\|_v=|x+f(y)|_v$.  It follows that, for $P\in\basin_v^+(\phi)$, we have
\[\left(\frac{1}{d+2}\right)|y_P|_v^d\leq \left|y_{\phi(P)}\right|_v=\|\phi(P)\|_v\leq \left(\frac{2d+3}{d+2}\right)|y_P|_v^d.\]
By induction, and taking logarithms and limits,  we have
\[-\frac{1}{d-1}\log(d+2)\leq \hat{\lambda}_{v, \phi}^+(P)-\log|y_P|_v\leq \frac{1}{d-1}\log\left(\frac{2d+3}{d+2}\right).\]

The proofs of the corresponding results  for $\hat{\lambda}^-_{v, \phi}$ are essentially the same.
\end{proof}

In considering the dynamics of polynomial actions on $\affine^1$, it is customary to consider them up to change of variables, that is, up to conjugation by an affine-linear map $z\mapsto \alpha z+\beta$.  It is natural to adopt a similar sense of equivalence in this context.  We will say that two polynomial maps \[\phi_1, \phi_2:\affine^2\to\affine^2,\] defined over a field $K$, are \emph{affine conjugate} if and only if there is a map
\[\psi(x, y)=\left(\alpha x+\beta y+s, \gamma x+\delta y + t\right)\]
with coefficients in $\overline{K}$, such that $\alpha\delta-\beta\gamma\neq 0$ and such that $\phi_2=\psi^{-1}\circ\phi_1\circ\psi$.  Considering maps up to such conjugacy shows that some of the apparent restrictions of the form of map we have chosen, for example the assumption that $f(y)$ is monic, are not genuine restrictions.  In particular, if $f(y)$ is not monic, an affine-linear change of variables transforms the H\'{e}non map to one in which the corresponding polynomial is monic, and so there is no loss of generality inherent in studying only this case.

Note that affine conjugacy is certainly a natural sense of equivalence to use in studying the canonical height.  The affine map $\psi^{-1}$ always extends to an automorphism of $\PP^2$, and so (if $K$ is a number field or function field), we have
\[h(\psi^{-1}(P))=h(P)+O_\psi(1).\]
It follows, if $\phi_1$ and $\phi_2$ are related as above, that
\begin{multline*}
\hat{h}^+_{\phi_2}(P)=\lim_{N\to\infty}d^{-N}h\left(\phi_2^N(P)\right)\\=\lim_{N\to\infty}d^{-N}\left(h\left(\phi_1^N(\psi(P))\right)+O(1)\right)=\hat{h}^+_{\phi_1}(\psi(P))
\end{multline*}
for all $P$, and similarly for $\hat{h}^-_{\phi_2}$.  One may also easily compute the effect of an affine-linear transformation on the local height functions $\hat{\lambda}^\pm_{v, \phi}$.

Since we are considering maps up to this equivalence, it is worth noting which maps of our chosen form are affine-conjugate to one another.  To this end, one easily checks that the following lemma holds.
\begin{lemma}\label{lem:affine conjugacy}
The H\'{e}non map $\phi_1(x, y)=(ay, x+f(y))$ is affine-conjugate to another H\'{e}non map $\phi_2$ if and only if the latter has the form
\[\phi_2(x, y)=\left(ay, x+\delta^{-1}f(\delta y+t)+\delta^{-1}(a-1)t\right),\]
where $\delta^{d-1}=1$.  In particular, if $a=1$, then the affine-conjugacy class of the H\'{e}non map determined by $a$ and $f$ is invariant under a precomposition of $f$ with a translation.
\end{lemma}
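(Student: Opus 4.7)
The approach is to impose the conjugacy equation $\phi_1\circ\psi = \psi\circ\phi_2$ directly and extract constraints by comparing polynomial coefficients on both sides. Write a completely general invertible affine map $\psi(x,y) = (\alpha x + \beta y + s,\ \gamma x + \delta y + t)$ over $\overline{K}$, and let $\phi_2(x,y) = (a'y,\ x + g(y))$ be an arbitrary H\'enon map with $g$ monic of degree $d \geq 2$. Both sides of the conjugacy equation are then explicit polynomial maps $\affine^2 \to \affine^2$ whose coefficients can be matched monomial by monomial.

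The first step is to compute the first coordinate of each side. On the left one obtains $a(\gamma x + \delta y + t)$, which is affine-linear in $(x,y)$. On the right one obtains $\alpha a' y + \beta(x + g(y)) + s$, which contains the term $\beta g(y)$ of degree $d \geq 2$; the key observation is that this forces $\beta = 0$. Matching the remaining monomials then yields $\gamma = 0$, $\alpha a' = a\delta$, and $s = at$. Under these constraints, the second coordinate equation reduces to
\[\alpha x + at + f(\delta y + t) = \delta x + \delta g(y) + t,\]
whose $x$-coefficient comparison gives $\alpha = \delta$, which combined with $\alpha a' = a\delta$ forces $a' = a$. Solving for $g$ then produces the stated formula $g(y) = \delta^{-1} f(\delta y + t) + \delta^{-1}(a-1)t$, and since the leading coefficient of $\delta^{-1} f(\delta y + t)$ is $\delta^{d-1}$, monicity of $g$ forces $\delta^{d-1} = 1$.

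The converse direction is a direct verification: for any $t \in \overline{K}$ and any $(d{-}1)$-th root of unity $\delta$, the map $\psi(x,y) = (\delta x + at,\ \delta y + t)$ is invertible, and a short computation of $\psi^{-1}\circ\phi_1\circ\psi$ recovers the claimed $\phi_2$. The ``in particular'' assertion for $a=1$ is then the special case $\delta = 1$, in which the formula collapses to $g(y) = f(y+t)$. The calculation is routine throughout; the only substantive point is realizing that the affine-linearity of the first coordinate of $\phi_1\circ\psi$, contrasted with the degree of $g$, immediately forces $\beta = 0$, and every other relation cascades from this single constraint.
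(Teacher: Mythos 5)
Your proof is correct, and since the paper offers no argument for this lemma beyond the remark that ``one easily checks'' it, your direct coefficient-matching verification is exactly the kind of computation the author is implicitly invoking. The structure is sound: writing the conjugacy as $\phi_1\circ\psi = \psi\circ\phi_2$, observing that the first coordinate of the left side is affine-linear while the term $\beta g(y)$ on the right has degree $d\geq 2$ (forcing $\beta=0$ since $g$ is monic so no cancellation can occur), and then cascading through $\gamma=0$, $s=at$, $\alpha=\delta$, $a'=a$, and the formula for $g$ with $\delta^{d-1}=1$ from monicity, is precisely the right sequence of deductions. The converse check with $\psi(x,y) = (\delta x + at,\,\delta y + t)$ is also carried out correctly.

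One small point worth making explicit if you were to write this up: you fix the degree of $g$ to be $d$ at the outset, but this should really be \emph{deduced}. It follows immediately once you have $\beta=\gamma=0$ and compare degrees in $y$ on both sides of the second coordinate equation, so it is not a gap, but since you are matching coefficients anyway it is cleaner to let $g$ have a priori degree $d'$ and conclude $d'=d$ from $\deg_y\bigl(f(\delta y + t)\bigr) = \deg_y\bigl(\delta g(y)\bigr)$.
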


\section{Filled Julia sets, and the proof of Theorem~\ref{th:benedetto}}\label{sec:benedetto proof}

We proceed now with the proof of Theorem~\ref{th:benedetto}, an analogue of a result of Benedetto~\cite{benedetto}.  The argument is similar in spirit to the proof of Theorem~\ref{th:lang}, below, but the details diverge somewhat, and so we have not attempted to unify the exposition.  
Throughout this section, we consider a H\'{e}non map of the form
\[\phi(x, y)=(y, x+f(y)),\]
defined over a field $F$.   Ultimately, the field $F$ will be a function field in the sense described above, but until Lemma~\ref{lem:benedetto isotrivial}, one might consider it simply to be a field equipped with one or more non-archimedean absolute values.  We will also suppose that every one of these absolute values has been extended in some way to $\overline{F}$.

We will assume, throughout, that $d=\deg(f)\geq 3$, an assumption used in the proof of Lemma~\ref{lem:benedetto main lemma}.  This is a minor assumption, though, as any H\'{e}non map with $f$ quadratic is affine-conjugate to one of the maps to which Theorem~\ref{th:lang} applies, by Lemma~\ref{lem:affine conjugacy}, and this change of variables increases the number of places of bad reduction by at most the number of places above 2.

 We  define, for any polynomial $g(z)\in F[z]$ and any valuation $v\in M_F$,
\[\rho_v(g)=\max\{1, |\zeta_1-\zeta_2|_v:g(\zeta_1)=g(\zeta_2)=0\},\]
 and \[\Delta_g=\{(\zeta_1, \zeta_2):g(\zeta_1)=g(\zeta_2)=0\}.\]
 It is worth noting that for $\phi(x, y)=(y, x+f(y))$, the set $\Delta_f\subseteq\affine^2(\overline{F})$ is precisely the set of points of period dividing 2, with $\phi$ acting on $\Delta_f$ as reflection across the diagonal.
 
Given any point $Q\in\affine^2(\overline{F}_v)$, we will define the \emph{$v$-adic closed disk of radius $r$ about $Q$} by
\[\overline{D_v(Q; r)}=\{P\in\affine^2(\overline{F}_v):\|P-Q\|_v\leq r\}.\]
Finally, we define the \emph{$v$-adic filled Julia set} of $\phi$ by
\[\julia_{v, \phi}=\left\{P\in\affine^2(\overline{F}_v):\|\phi^N(P)\|_v\text{ is bounded as }N\to\pm\infty\right\}.\]
Note that, by the proof of Lemma~\ref{lem:local heights}, $\julia_{v, \phi}$ coincides precisely with the common vanishing of $\hat{\lambda}_{v, \phi}^+$ and $\hat{\lambda}_{v, \phi}^-$.

\begin{lemma}\label{lem:benedetto filled julia}
For any $\phi$, and any place $v$, we have
\[\julia_{v, \phi}\subseteq\bigcup_{Q\in \Delta_f}\overline{D_v(Q; 1)}\subseteq \overline{D_v(Q'; \rho_v(f))},\] for any $Q'\in\Delta_f$.
\end{lemma}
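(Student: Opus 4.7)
The second inclusion is immediate from the ultrametric triangle inequality: for any $Q, Q' \in \Delta_f$, the coordinates of $Q - Q'$ are differences of roots of $f$, so $\|Q - Q'\|_v \leq \rho_v(f)$, and hence $\|P - Q'\|_v \leq \max\{\|P - Q\|_v, \|Q - Q'\|_v\} \leq \max\{1, \rho_v(f)\} = \rho_v(f)$ for any $P \in \overline{D_v(Q; 1)}$.

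For the first inclusion, fix $P = (x, y) \in \julia_{v, \phi}$. By the proof of Lemma~\ref{lem:local heights}, $\julia_{v, \phi}$ is the common zero set of $\hat{\lambda}^+_{v, \phi}$ and $\hat{\lambda}^-_{v, \phi}$, and it is $\phi$-invariant. Parts (4) and (8) of that lemma then imply that $\phi^N(P) \notin \basin^+_v(\phi) \cup \basin^-_v(\phi)$ for every $N \in \ZZ$. Writing the orbit as $\phi^N(P) = (y_{N-1}, y_N)$ and unpacking the definitions of the basins (with $a = 1$), these exclusions read
\[|y_N|_v \leq \max\{|y_{N-1}|_v^{1/d}, C_{f, v}\} \quad\text{and}\quad |y_{N-1}|_v \leq \max\{|y_N|_v^{1/d}, C_{f, v}\}\]
for all $N \in \ZZ$. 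A short iteration, parallel to the escape-bound argument in the proof of Lemma~\ref{lem:local heights}, then yields the uniform bound $|y_N|_v \leq C_{f, v}$ for every $N \in \ZZ$.

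The recurrence $y_{N+1} - y_{N-1} = f(y_N)$ and the bound just obtained give $|f(y_N)|_v \leq C_{f, v}$ for all $N$. The crux of the proof is to sharpen this to $|f(y_N)|_v \leq 1$: using the factorization $f(y) = \prod_i (y - \zeta_i)$, the condition $|y_N - \zeta_i|_v > 1$ for every root would force $|f(y_N)|_v > 1$, so contrapositively there is a root $\zeta_{i(N)}$ with $|y_N - \zeta_{i(N)}|_v \leq 1$. Applying this to both $y_{-1}$ and $y_0$ places $P = (y_{-1}, y_0) \in \overline{D_v((\zeta_{i(-1)}, \zeta_{i(0)}); 1)}$ with $(\zeta_{i(-1)}, \zeta_{i(0)}) \in \Delta_f$, as required.

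The main obstacle is the sharpening from $|f(y_N)|_v \leq C_{f, v}$ to $|f(y_N)|_v \leq 1$ in the bad-reduction case $C_{f, v} > 1$. In good reduction ($C_{f, v} = 1$), every root $\zeta$ satisfies $|\zeta|_v \leq 1$ and every $y$ with $|y|_v \leq 1$ lies automatically within distance $1$ of each root, so the inclusion is trivial. In bad reduction, one must exploit the bi-infinite iteration more carefully: if $|f(y_{N_0})|_v > 1$ for some $N_0$, the identity $|y_{N_0+1} - y_{N_0-1}|_v = |f(y_{N_0})|_v$ combined with the ceiling $|y_{N_0 \pm 1}|_v \leq C_{f, v}$ forces very rigid non-archimedean cancellation between successive $y_N$, which propagates through $y_{N+2} = y_N + f(y_{N+1})$ and ultimately contradicts the persistence of the orbit in $\{|z|_v \leq C_{f, v}\}$. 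This is the hard step, and I expect it to require a careful case analysis exactly along the lines of the arguments in the proof of Lemma~\ref{lem:local heights}.
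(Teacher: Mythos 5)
The second inclusion and your treatment of the good-reduction case are correct and match the paper. However, your proposed route to the first inclusion has two genuine problems.

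First, the intermediate bound $|y_N|_v \leq C_{f,v}$ that you extract from the basin conditions is correct but too weak. It controls the size of $y_N$, whereas what one actually needs is a bound on the distance from $\phi^N(P)$ to a fixed point $Q' \in \Delta_f$. These are not the same thing: if, for example, all roots of $f$ cluster near a single $\overline{K}$-point $\alpha$ with $|\alpha|_v = C_{f,v} > 1$ and pairwise distance $\rho_v(f) = 1$, then $|y_N|_v \leq C_{f,v}$ is compatible with $y_N$ being at distance $C_{f,v}$ from every root (e.g.\ $y_N = 0$). In the paper one instead proves, by a direct escape argument tracking $\|\phi^N(P) - Q'\|_v$ itself, that $\julia_{v,\phi} \subseteq \overline{D_v(Q'; \rho_v(f))}$; this is what actually pins the orbit near $\Delta_f$, and the statement's first inclusion is deduced from it afterward.

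Second, and more fundamentally, your proposed ``sharpening'' to $|f(y_N)|_v \leq 1$ is not merely a hard step --- it is false in general. For $P \in \julia_{v, \phi}$ one gets $|f(y_P)|_v \leq \rho_v(f)$, and this bound can be attained with $\rho_v(f) > 1$: the proof of Lemma~\ref{lem:benedetto main lemma} explicitly sets up and works through the case $|y_P - \zeta_1|_v = \rho_v(f)$ and $|y_P - \zeta_i|_v = 1$ for $i \geq 2$, which gives $|f(y_P)|_v = \rho_v(f)$. So the ``rigid cancellation'' you are hoping to force cannot happen, and the plan as stated does not close. The idea you are missing is that one does \emph{not} need $|f(y)|_v \leq 1$. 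Given $|y - \zeta|_v \leq \rho_v(f)$ for all roots $\zeta$ (from the containment $\julia_{v,\phi} \subseteq \overline{D_v(Q'; \rho_v(f))}$) and $|f(y)|_v \leq \rho_v(f)$, the ultrametric inequality guarantees some root $\eta$ with $|y - \eta|_v \geq \rho_v(f)$ (take two roots realizing $\rho_v(f)$). Dividing out this factor leaves $\prod_{\zeta \neq \eta} |y - \zeta|_v \leq 1$, and since there are $d - 1 \geq 1$ remaining factors, at least one of them is $\leq 1$. That is the whole point; there is no need to drive $|f(y)|_v$ down to $1$.
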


\begin{proof}
The second containment is simply the ultrametric inequality.  Suppose that $P=(x, y)$ is not contained in $\overline{D_v(Q'; \rho_v(f))}$, for our given point $Q'\in\Delta_f$.  We will show that $P\not\in\julia_{v, \phi}$ by an argument very similar to, be not exactly following from, the proof of Lemma~\ref{lem:local heights}.  We will first assume that $|x-x_{Q'}|_v\leq |y-y_{Q'}|_v$, from which it follows that $|y-y_{Q'}|_v>\rho_v(f)$. Then $|y-\zeta|_v=|y-y_{Q'}|_v$ for any root $f(\zeta)=0$, and hence \[|f(y)|_v=|y-y_{Q'}|_v^d>\rho_v(f)^d.\] 
On the other hand, for any root $f(\zeta)=0$, we have \[|x-\zeta|_v\leq\max\{|x-x_{Q'}|_v, \rho_v(f)\}\leq |y-y_{Q'}|_v<|y-y_{Q'}|_v^d.\]
So we have
\[\left|y_{\phi(P)}-y_{Q'}\right|_v=\left|f(y)+x-y_{Q'}\right|_v=|y-y_{Q'}|^d_v>|y-x_{Q'}|_v=|x_{\phi(P)}-x_{Q'}|_v,\]
and so
\[\|\phi(P)-Q'\|_v=\|P-Q'\|_v^d.\]
We obtain by induction
\[\|\phi^N(P)-Q'\|_v=\|P-Q'\|_v^{d^N},\]
and so, in particular, $\|\phi^N(P)\|_v\to\infty$ as $N\to\infty$, whereupon $P\not\in \julia_{v, \phi}$.  If $|x-x_Q'|_v\geq |y-y_Q'|_v$, then a similar argument shows that  $\|\phi^{-N}(P)\|_v\to\infty$ as $N\to\infty$.  In either case, we cannot have $P\in \julia_{v, \phi}$.

Now, given that $\julia_{v, \phi}\subseteq \overline{D_v(Q'; \rho_v(f))}$, we will show the stronger containment \[\julia_{v, \phi}\subseteq\bigcup_{Q\in \Delta}\overline{D_v(Q; 1)}.\]  Suppose that $P=(x, y)\in \julia_{v, \phi}$, so that $\phi(P)\in \julia_{v, \phi}\subseteq \overline{D_v(Q'; \rho_v(f))}$.  It follows that 
\[|y-x_{Q'}|_v=|x_{\phi(P)}-x_{Q'}|_v\leq \rho_v(f),\]
 and 
 \[|x+f(y)-y_{Q'}|_v=|y_{\phi(P)}-y_{Q'}|_v\leq \rho_v(f).\]  Similarly, $\phi^{-1}(P)\in \julia_{v, \phi}\subseteq \overline{D_v(Q'; \rho_v(f))}$, and so $|x-y_{Q'}|_v\leq \rho_v(f)$ and $|y-f(x)-x_{Q'}|_v\leq \rho_v(f)$.  These combine to give 
 \begin{equation}\label{eq:f(x) f(y) small}|f(x)|_v, |f(y)|_v\leq \rho_v(f).\end{equation}
   Since there exists an $\eta$ with $f(\eta)=0$ and $|y-\eta|_v\geq \rho_v(f)$, we must have $|y-\zeta|_v\leq 1$ for some root $\zeta$ of $f$.  Similarly, there exists a root $\zeta'$ of $f$ with $|x-\zeta'|_v\leq 1$, and we have $P\in \overline{D_v((\zeta', \zeta); 1)}$.  Since $P$ was arbitrary, we have shown that $\julia_{v, \phi}\subseteq\bigcup_{Q\in \Delta_f}\overline{D_v(Q; 1)}$.
\end{proof}

The previous lemma tells us that every point $P\in\julia_{v, \phi}$ is distance at most one from a point of period dividing 2.  We see in the next lemma that the points in $K_{v, \phi}$ must cluster slightly more than this fact alone would indicate.  We remark, for the reader's convenience, that in the case that the set $X$ is infinite, in any of the statements below, the estimate $\#Y \geq \#X/N$ should be interpreted to mean that $Y$ is infinite as well.
\begin{lemma}\label{lem:benedetto main lemma}
Suppose that $X\subseteq\julia_{v, \phi}$, and that $\rho_v(f)>1$.  Then there is a subset $Y\subseteq X$ such that $\# Y\geq \#X/(3d^3)$, and such that for all $P_1, P_2\in Y$,
\[\max\{|y_{\phi^{-1}(P_1)}-y_{\phi^{-1}(P_2)}|, |y_{P_1}-y_{P_2}|, |y_{\phi(P_1)}-y_{\phi(P_2)}|\}\leq 1\]
and
\[\min\{|y_{\phi^{-1}(P_1)}-y_{\phi^{-1}(P_2)}|, |y_{P_1}-y_{P_2}|, |y_{\phi(P_1)}-y_{\phi(P_2)}|\}< 1.\]
\end{lemma}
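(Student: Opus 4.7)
The plan is a two-stage pigeonhole. Write $u_k(P) = y_{\phi^k(P)}$ for $k \in \{-1, 0, 1\}$, so the three quantities in the statement are $|u_k(P_1) - u_k(P_2)|_v$. The plan is first to cluster $X$ by triples of roots of $f$ to secure the max condition, then to refine by a trichotomy to secure the min condition.

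First, since $\julia_{v, \phi}$ is invariant under $\phi^{\pm 1}$, Lemma~\ref{lem:benedetto filled julia} applied in turn to $\phi^{-1}(P), P, \phi(P)$ yields roots $\zeta_{-1}, \zeta_0, \zeta_1$ of $f$ with $|u_k(P) - \zeta_k|_v \leq 1$. There are at most $d^3$ such triples of roots, so pigeonhole produces $X^\ast \subseteq X$ with $\#X^\ast \geq \#X/d^3$ on which all points share a common triple. On $X^\ast$ the ultrametric inequality makes $|u_k(P_1) - u_k(P_2)|_v \leq 1$ automatic for each $k$, so the max condition holds.

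Next, I would exploit the H\'enon recurrence. Since $\phi(x, y) = (y, x + f(y))$ and $\phi^{-1}(x, y) = (y - f(x), x)$, the $y$-coordinates along an orbit satisfy $u_1(P) = u_{-1}(P) + f(u_0(P))$; subtracting for $P_1, P_2 \in X^\ast$ gives the additive syzygy
\[
\bigl(u_1(P_1) - u_1(P_2)\bigr) - \bigl(u_{-1}(P_1) - u_{-1}(P_2)\bigr) = f(u_0(P_1)) - f(u_0(P_2)),
\]
so by the ultrametric and what is already known, $|f(u_0(P_1)) - f(u_0(P_2))|_v \leq 1$ on $X^\ast$. Define $P \sim_k P'$ iff $|u_k(P) - u_k(P')|_v < 1$. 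The min condition for a pair is exactly that it is $\sim_k$-equivalent for some $k$, so it suffices to exhibit $Y \subseteq X^\ast$ with $\#Y \geq \#X^\ast/3$ contained in a single $\sim_k$-class.

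The main obstacle is this combinatorial refinement: showing that one of the three partitions $\sim_{-1}, \sim_0, \sim_1$ has a class of size at least $\#X^\ast/3$ in $X^\ast$. The naive approach, using that the $\sim_0$-classes meeting $X^\ast$ lie inside the at most $d$ components of $f^{-1}(\overline{D_v(\zeta_1 - \zeta_{-1}; 1)}) \cap \overline{D_v(\zeta_0; 1)}$, would only produce a factor of $d$, which is too weak once $d > 3$. The intended sharpening must leverage the syzygy together with the hypotheses $\rho_v(f) > 1$ and $d \geq 3$: the large spacing of roots forces the $f$-image of each open $1$-subdisk of $\overline{D_v(\zeta_0; 1)}$ to spread out, which is incompatible with uniform dispersion in all three coordinates. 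I expect a direct trichotomy, splitting into three cases according to which of the three coordinates ``dominates'' the observed reductions, to close the argument.
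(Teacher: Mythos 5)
Your outline is structurally right---a pigeonhole over $\{-1,0,1\}\times\{\text{roots}\}^3$ giving $3d^3$, with the max condition coming for free from Lemma~\ref{lem:benedetto filled julia} and the ultrametric---but you have explicitly left open the step that carries all the weight, and the closing sentence guesses at the wrong mechanism. The crux of the paper's argument is a purely \emph{pointwise} trichotomy: for each $P\in\julia_{v,\phi}$, at least one of $y_{\phi^{-1}(P)}$, $y_P$, $y_{\phi(P)}$ lies at distance \emph{strictly} less than $1$ from a root of $f$. This does not come from the additive syzygy $u_1-u_{-1}=f(u_0)$ applied to differences of two points, nor from any dispersion argument about images of subdisks. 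It comes from a case analysis using the inequalities $|f(y_P)|_v,\,|f(y_{\phi^{-1}(P)})|_v,\,|f(y_{\phi(P)})|_v\le\rho_v(f)$ (which are~\eqref{eq:f(x) f(y) small}, consequences of Lemma~\ref{lem:benedetto filled julia} applied to $\phi^{\pm1}(P)$), as follows. Since $|y_P-\zeta_i|_v\le\rho_v(f)$ for all roots $\zeta_i$ and equality holds for at least one (else all roots would sit in a strictly smaller disk, contradicting the definition of $\rho_v(f)$), if $y_P$ is not strictly within distance $1$ of any root then after relabeling $|y_P-\zeta_1|_v=\rho_v(f)$ and $|y_P-\zeta_i|_v=1$ for $i\ge2$, which forces $|f(y_P)|_v=\rho_v(f)$. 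Running the identical argument for $x_P=y_{\phi^{-1}(P)}$ gives, in the remaining case, some $j$ with $|x_P-\zeta_j|_v=\rho_v(f)$ and $|x_P-\zeta_i|_v=1$ for $i\ne j$. Now $|f(y_P)|_v=\rho_v(f)>1=|x_P-\zeta_i|_v$ for $i\ne j$, so $|y_{\phi(P)}-\zeta_i|_v=|x_P+f(y_P)-\zeta_i|_v=\rho_v(f)$ for $i\ne j$, whence
\[
|y_{\phi(P)}-\zeta_j|_v=\frac{|f(y_{\phi(P)})|_v}{\prod_{i\ne j}|y_{\phi(P)}-\zeta_i|_v}\le\rho_v(f)^{2-d}<1,
\]
and this is precisely where $d\ge3$ is used. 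Without this computation your ``main obstacle'' is not resolved.

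A secondary issue: once you have the pointwise trichotomy, your two-stage pigeonhole (first on the triple of $\le 1$-roots, then on $\epsilon$) is fragile. If you fix a triple $(\zeta_i,\zeta_j,\zeta_k)$ in stage one without coordinating it with the strictly-close root, two points $P_1,P_2$ with $\epsilon(P_1)=\epsilon(P_2)=0$ could be strictly close to \emph{different} roots (both within distance $\le 1$ of the common $\zeta_j$ is possible when $\rho_v(f)>1$), and then they need not lie in the same $\sim_0$-class. The paper avoids this by doing a single pigeonhole on the tuple $(\epsilon,i,j,k)$ in which the $\epsilon$-component of the root triple is \emph{chosen to be} the strictly-close root for $P$; since the strict inequality implies the weak one, this choice is always legal. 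You should either adopt that single-tuple pigeonhole or argue that in stage one you may always select the strictly-close root when one exists.
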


\begin{proof}
Suppose that $P=(x, y)\in\julia_{v, \phi}$, and for convenience order the roots of $f(z)$, with multiplicity, as $\zeta_1, ..., \zeta_{d}$.  By equation~\eqref{eq:f(x) f(y) small} from the proof of Lemma~\ref{lem:benedetto filled julia}, we have $|f(y)|_v\leq \rho_v(f)$.  Since $|y-\zeta_i|_v\leq \rho_v(f)$ for all $i$, by Lemma~\ref{lem:benedetto filled julia}, and since there is some $i$ for which the inequality is sharp (otherwise all roots of $f(z)$ are contained in a disk of radius strictly less than $\rho_v(f)$), we may suppose, without loss of generality, that  $|y-\zeta_1|=\rho_v(f)$. We then have $\prod_{i\geq 2}|y-\zeta_i|_v\leq 1$, and so if it is not the case that $|y-\zeta_i|_v<1$, for some $i$, then we must have $|y-\zeta_i|_v=1$ for all $i\geq 2$. 
 
 Now, assuming that we are in the latter case, we similarly have \[\left|f(y_{\phi^{-1}(P)})\right|_v=|f(x)|_v\leq \rho_v(f),\]
 and so by the same reasoning, either $|y_{\phi^{-1}(P)}-\zeta_i|_v<1$ for some $i$, or else we have $|y_{\phi^{-1}(P)}-\zeta_j|_v=\rho_v(f)$ for some $j$, and $|x_P-\zeta_i|_v=|y_{\phi^{-1}(P)}-\zeta_i|_v=1$ for all $i\neq j$.  (In fact, we must have $j=1$, but we do not use this observation.)
But in this case we have,  for all $i\neq j$, 
\[|y_{\phi(P)}-\zeta_i|_v=|x+f(y)-\zeta_i|_v=\rho_v(f),\]
since $|f(y)|_v=\rho_v(f)>1=|x-\zeta_i|_v$. Then, as $|f(y_{\phi(P)})|_v\leq \rho_v(f)$ by \eqref{eq:f(x) f(y) small}, we have
\[|y_{\phi(P)}-\zeta_j|_v=\frac{|f(y_{\phi(P)})|_v}{\prod_{i\neq j}|y_{\phi(P)}-\zeta_i|_v}\leq \rho_v(f)^{2-d}<1,\]
where we use our assumption that $d\geq 3$.

To reiterate, we have shown that for every $P\in \julia_{v, \phi}$, at least one of $y_P$, $y_{\phi(P)}$, or $y_{\phi^{-1}(P)}$ is at distance strictly less than 1 from some root of $f(z)$, while Lemma~\ref{lem:benedetto filled julia} tells us that each is at distance at most 1 from some root of $f(z)$.  Now, to each $P\in X$, we associate the tuple \[(\epsilon, i, j, k)\in\{-1, 0,  1\}\times\{1, ..., d\}^3\] if and only if
\begin{equation}\label{eq:close to roots}
\left|y_{\phi^{-1}(P)}-\zeta_i\right|_v\leq 1,\ \left|y_{P}-\zeta_j\right|_v\leq 1, \text{ and }\left|y_{\phi(P)}-\zeta_k\right|_v\leq 1,
\end{equation}
\emph{and} the inequality involving $\phi^\epsilon(P)$ is strict.  It is possible that more than one tuple is associated to a given $P$, but what we have just shown is that every point is associated to at least one tuple.  There are $3d^3$ distinct tuples, and so the set $X$ must contain a subset $Y$ of size at least $\# X/(3d^3)$ consisting of points all associated to the same tuple.  For all $P_1, P_2\in Y$, we have
\begin{equation*}
\left|y_{\phi^{-1}(P_1)}-y_{\phi^{-1}(P_2)}\right|_v\leq 1,\ \left|y_{P_1}-y_{P_2}\right|_v\leq 1, \text{ and }\left|y_{\phi(P_1)}-y_{\phi(P_1)}\right|_v\leq 1,
\end{equation*}
by \eqref{eq:close to roots} and the ultrametric inequality, as well as
\[\left|y_{\phi^\epsilon(P_1)}-y_{\phi^\epsilon(P_2)}\right|_v<1.\]
This proves the lemma.
\end{proof}

Lemma~\ref{lem:benedetto main lemma} shows that the values $y_P$, for $P\in\julia_{v, \phi}$ cluster to a certain extent.  The main idea of the proof is to use this clustering to contradict the product formula for $F$, as in the proof of the main result of \cite{benedetto}.  It might be the case, though, that a given $X\subseteq\julia_{v, \phi}$ contains a large number of points on a given horizontal line $y=c$, in which case the clustering given by Lemma~\ref{lem:benedetto main lemma} is trivial.  Lemma~\ref{lem:benedetto constant y lemma} shows that in this case, the values $x_P$ cluster in a similarly useful way.

\begin{lemma}\label{lem:benedetto constant y lemma}
Suppose that $\rho_v(f)>1$, that $L\subseteq \affine^2(K_v)$ is a horizontal line, and that
 $X\subseteq \julia_{v, \phi}\cap L$.  Then there is a subset $Y\subseteq X$ with $\# Y\geq \# X/d$ and such that for all $P_1, P_2\in Y$ we have
\[|x_{P_1}-x_{P_2}|_v<1.\]
\end{lemma}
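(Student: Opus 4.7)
The plan is to specialize the case-analysis underlying the proof of Lemma~\ref{lem:benedetto main lemma} to the shifted family $\tilde X := \{\phi^{-1}(P) : P \in X\} \subseteq \julia_{v, \phi}$.  Since $\phi^{-1}(P) = (c - f(x_P), x_P)$, the $y$-coordinate of $\tilde P := \phi^{-1}(P)$ is precisely $x_P$, while the $y$-coordinate of $\phi(\tilde P) = P$ is the constant $c$; so clustering the $x$-coordinates of elements of $X$ is exactly the problem of clustering the $y$-coordinates of elements of $\tilde X$, and the horizontal-line hypothesis degenerates the ``middle'' case of the dichotomy of Lemma~\ref{lem:benedetto main lemma} to a global statement depending only on $c$.

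For each $P \in X$, the proof of Lemma~\ref{lem:benedetto filled julia} applied to $\phi^{-1}(P) \in \julia_{v, \phi}$ produces $|f(x_P)|_v \leq \rho_v(f)$ together with a root $\zeta(P)$ of $f$ satisfying $|x_P - \zeta(P)|_v \leq 1$.  Pigeonholing by $\zeta(P)$ over the at most $d$ roots of $f$ yields a subset $Y \subseteq X$ of cardinality at least $\#X / d$ on which $\zeta(P)$ is constant, equal to a fixed root $\zeta$, and for $P_1, P_2 \in Y$ the ultrametric inequality gives $|x_{P_1} - x_{P_2}|_v \leq 1$ directly.  To upgrade this to $< 1$, I replay the strictness dichotomy from the proof of Lemma~\ref{lem:benedetto main lemma} on $\tilde P$: ordering the roots so that $|x_P - \zeta_1|_v = \rho_v(f)$ is maximal, either some $|x_P - \zeta_i|_v < 1$ strictly---in which case the pigeonhole immediately supplies the strict clustering---or else the ``bad case'' $|x_P - \zeta_i|_v = 1$ for all $i \neq 1$ holds, so that $|f(x_P)|_v = \rho_v(f)$ exactly.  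In the bad case, the side constraint $|c - f(x_P) - \zeta_a|_v \leq 1$ coming from $\phi^{-1}(P) \in \overline{D_v(Q;1)}$ for some $Q = (\zeta_a, \zeta_b) \in \Delta_f$ forces $|c - \zeta_a|_v = \rho_v(f)$; iterating the dichotomy on $\phi^{-2}(P)$ (whose $y$-coordinate is $c - f(x_P)$) and then on $P$ itself, exactly as in the closing paragraph of the proof of Lemma~\ref{lem:benedetto main lemma} and using $d \geq 3$, produces a strict estimate $|c - \zeta_j|_v \leq \rho_v(f)^{2-d} < 1$ for some root $\zeta_j$.  Combining this with $|f(x_P) - (c - \zeta_a)|_v \leq 1$ and the explicit factorization of $f(x_P)$ furnished by the bad case promotes the bound on $x_P$ to $|x_P - \zeta'|_v < 1$ strictly, for a root $\zeta'$ of $f$ that remains indexed by the same $d$-element pigeonhole.

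The main obstacle will be this final promotion step: I must verify that the strict inequality extracted via $c$ (or via $c - f(x_P)$) can always be re-expressed as a strict inequality $|x_P - \zeta'|_v < 1$ for some root $\zeta'$ of $f$, and that doing so does not inflate the pigeonhole count beyond $d$.  I expect the resolution to exploit the ultrametric rigidity of the bad case---the identities $|c - \zeta_a|_v = \rho_v(f)$, $|x_P - \zeta_1|_v = \rho_v(f)$, and $|c - \zeta_j|_v < 1$ together pin down the various distinguished roots appearing in the argument, forcing them to coincide or to be determined by the global geometry of $c$---so that the final classification collapses to a single $d$-fold pigeonhole over the roots of $f$ rather than the $d^2$-fold one one might naively expect.
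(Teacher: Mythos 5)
Your approach diverges substantially from the paper's, and the step you flag as the ``main obstacle'' is in fact a genuine gap that cannot be closed by the route you propose. The paper does not replay the three-term dichotomy of Lemma~\ref{lem:benedetto main lemma}. Its key new ingredient is the fact (proved at the top of its argument) that $\rho_v$ is unchanged by post-composition with a bounded translation: for monic $g$ with $\rho_v(g)>1$ and $|c|_v\leq\rho_v(g)$, one has $\rho_v(g+c)=\rho_v(g)$. With this in hand, the paper observes that $|f(x_P)-(y_L-\zeta)|_v\leq 1$ for some root $\zeta$ of $f$, introduces the \emph{shifted} polynomial $g(z)=f(z)-y_L+\zeta$ (which has $\rho_v(g)=\rho_v(f)$ since $|y_L-\zeta|_v\leq\rho_v(f)$), and derives from $\prod_i|x_P-\eta_i|_v\leq 1$ --- together with the fact that $|x_P-\eta_i|_v\geq\rho_v(f)$ for at least one root $\eta_i$ of $g$ --- that $|x_P-\eta_j|_v<1$ strictly for some other root $\eta_j$ of $g$. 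The strict inequality is won against roots of the shifted polynomial $g$, not against roots of $f$.

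The difficulty with your upgrade step is structural, not merely a matter of filling in ultrametric bookkeeping. You propose to pigeonhole on a root $\zeta(P)$ of $f$ with $|x_P-\zeta(P)|_v\leq 1$, and then sharpen to $<1$ by applying the dichotomy of Lemma~\ref{lem:benedetto main lemma} to $\tilde P=\phi^{-1}(P)$. But that dichotomy, applied to $\tilde P$, concerns the three quantities $y_{\phi^{-1}(\tilde P)}=y_L-f(x_P)$, $y_{\tilde P}=x_P$, and $y_{\phi(\tilde P)}=y_L$. The last of these is the constant $y_L$, independent of $P$. If $y_L$ happens to lie strictly within distance $1$ of a root of $f$ (which can certainly occur), the conclusion of the dichotomy is satisfied identically for every $P\in X$ via the third slot, with no constraint whatsoever on $x_P$. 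So the mechanism you intend to use for the $\leq 1\Rightarrow<1$ promotion is short-circuited precisely in the horizontal-line setting. This is why the paper cannot, and does not, reuse the three-term argument here, and instead introduces the translation invariance of $\rho_v$ as a separate tool. Your third paragraph speculates that some ``ultrametric rigidity'' will collapse the count and supply strictness, but there is no argument there; the rigidity you would need is exactly what the translation-invariance lemma encodes, and without it the promotion step does not go through.
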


\begin{proof}
We first note that post-composition of a polynomial with a small translation does not change the value of $\rho_v$.  In particular, for any monic polynomial $g(z)\in\overline{F}[z]$ with $\rho_v(g)>1$, and any $c\in\overline{F}$ with $|c|_v\leq\rho_v(g)$, we have $\rho_v(g+c)=\rho_v(g)$.  To show this, we note first that $\rho_v$ is clearly unchanged by pre-composition with a translation, and so we may suppose that $g(0)=0$ and
\[\rho_v(g)=\max\{|\zeta|_v:g(\zeta)=0\}.\]
Now, let $i$ denote the number of roots $g(\zeta)=0$, counted with multiplicity, such that $|\zeta|_v=\rho_v(g)$.  We note that we must have $i<d$, since $g(0)=0$.  If we write
\[g(z)=z^d+m_{d-1}z^{d-1}+\cdots+m_1z,\]
then, by the ultrametric inequality, we have $|m_{d-i}|_v=\rho_v(g)^i$.
  Note that, since $i\neq d$, $m_{d-i}$ is also the coefficient of $z^{d-i}$ appearing in $g(z)+c$, and so $g(z)+c$ must have a root $\eta$ satisfying $|\eta|_v\geq \rho_v(f)$.  On the other hand, the constant term of $g(z)+c$ has size
  \[\prod_{f(\eta')=0}\left|\eta'\right|_v=|c|_v\leq \rho_v(g),\]
  and so $g(z)+c$ has a root $\eta'$ satisfying $|\eta'|_v\leq 1<\rho_v(g)$.  Noting that $|\eta-\eta'|_v=\rho_v(g)$, we've shown that $\rho_v(g+c)\geq \rho_v(g)$.  But we can now apply this argument to the post-composition of $g(z)+c$ with translation by $-c$ to obtain the opposite inequality.
  
Now suppose that $L$ is defined by $y=y_L$, for $y_L\in F$.  Then, assuming $X$ is non-empty, we have $|y_L-\zeta|_v\leq \rho_v(f)$ for all roots $\zeta$ of $f(z)$.  For any $P=(x, y_L)\in X$ we have, by Lemma~\ref{lem:benedetto filled julia},
\[|y_L-f(x)-\zeta|_v=|x_{\phi^{-1}(P)}-\zeta|_v\leq 1\]
for some root $\zeta$ of $f(z)$.  So, if $\eta_1, ..., \eta_d$ are the roots of $f(z)-y_L+\zeta$, listed with multiplicity, we must have
\[\prod_{i=1}^d\left|x-\eta_i\right|_v\leq 1.\]
But, since $\rho_v(f(z)-y_L+\zeta)=\rho_v(f)$, by the argument above, one of the terms in the product above must have size at least $\rho_v(f)>1$, and hence for some $i$ we have $|x-\eta_i|_v<1$.
The set $X$ must have a subset $Y$ of size at least $\# X/d$ such that for some $i$, $|x_P-\eta_i|_v<1$ for all $P\in Y$.  By the ultrametric inequality, this proves the lemma.
\end{proof}

The last piece needed for the proof of Theorem~\ref{th:benedetto} is the observation that, if $\phi$ is not isotrivial, then there is at least one place of $v$ with $\rho_v(f)>1$.

\begin{lemma}\label{lem:benedetto isotrivial}
Suppose that $\rho_v(f)=1$ for all $v\in M_F$.  Then there is an $\alpha\in \overline{F}$, such that $f(z+\alpha)\in \overline{K}[z]$.  In particular, $\phi(x, y)$ is affine-conjugate to a map defined over $\overline{K}$.
\end{lemma}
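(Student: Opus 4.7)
The plan is to take $\alpha$ to be any root of $f$ in $\overline{F}$, say $\alpha=\zeta_1$, where $\zeta_1,\dots,\zeta_d$ denote the roots of $f$ listed with multiplicity, and to verify that the translated polynomial
\[
f(z+\alpha)=\prod_{i=1}^d\left(z-(\zeta_i-\zeta_1)\right)
\]
has all of its coefficients in $\overline{K}$. The hypothesis $\rho_v(f)=1$ for every $v\in M_F$ gives $|\zeta_i-\zeta_1|_v\le 1$ at every $v$ (using the chosen extension to $\overline{F}$), and hence, because all places are non-archimedean, the ultrametric inequality implies that each coefficient $c$ of $f(z+\alpha)$---an elementary symmetric polynomial in the $\zeta_i-\zeta_1$---satisfies $|c|_v\le 1$ at every $v\in M_F$.

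The remaining task is to convert this universal bound into the conclusion $c\in\overline{K}$, and I would accomplish this by passing to the minimal polynomial $g_c(z)\in F[z]$ of $c$. Any $\sigma\in\Gal(\overline{F}/F)$ merely permutes the $\zeta_i$, so $\sigma(c)$ is the same elementary symmetric polynomial in the shifted differences $\{\zeta_j-\zeta_{\sigma(1)}\}$; these differences are again bounded in $v$-adic absolute value by $\rho_v(f)=1$, so $|\sigma(c)|_v\le 1$ for every $v$ and every Galois conjugate of $c$. The coefficients of $g_c$, being symmetric functions of the Galois conjugates, therefore lie in $F$ and satisfy $|\cdot|_v\le 1$ at every $v\in M_F$. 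The product formula for $F$ then forces each such nonzero coefficient to satisfy $|\cdot|_v=1$ everywhere, i.e.\ to lie in the constant subfield $K$. Hence $c$ is algebraic over $K$, so $c\in\overline{K}$, giving $f(z+\alpha)\in\overline{K}[z]$.

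For the final assertion, Lemma~\ref{lem:affine conjugacy} shows that conjugating $\phi$ by the translation $\psi(x,y)=(x+\alpha,y+\alpha)$ transforms it into $(x,y)\mapsto(y,x+f(y+\alpha))$, a H\'{e}non map now defined over $\overline{K}$. The one delicate point in the plan is the passage from $|c|_v\le 1$ at every $v$ to $c\in\overline{K}$ for $c\in\overline{F}$; this is where the product-formula hypothesis on $F$ does the real work, and is essentially a rigidity statement identifying elements bounded at every place with elements algebraic over the constant field.
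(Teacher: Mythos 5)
Your argument matches the paper's: both take $\alpha=\zeta_1$ a root of $f$, observe that the roots of $f(z+\zeta_1)$ are the differences $\zeta_i-\zeta_1$, and use $\rho_v(f)=1$ to conclude these are $v$-adically bounded at every place. Where the paper asserts ``by definition'' that an element of $\overline{F}$ bounded at every place lies in $\overline{K}$, you supply the underlying justification: pass to the minimal polynomial over $F$, observe its coefficients lie in $F$ and are everywhere bounded (since Galois conjugates of a root difference are again root differences), and invoke the product formula to conclude those coefficients are constants. This fills in a step the paper leaves implicit, but the route and the key observation are the same.
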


\begin{proof}
First, we note that $\rho_v(f(z))=\rho_v(f(z+\alpha))$ for any $\alpha\in \overline{F}$.  In particular, if $f(\zeta_1)=0$, we have $\rho_v(f(z+\zeta_1))=\rho_v(f(z))$.  But the roots of $f(z+\zeta_1)$ are precisely the elements of the form $\zeta_2-\zeta_1$, for $f(\zeta_2)=0$.  In particular, the roots $\eta$ of $f(z+\zeta_1)$ satisfy $|\eta|_v\leq 1$ for all $v\in M_F$.  By definition, this means that $\eta\in \overline{K}$ for every root $\eta$ of $f(z+\zeta_1)$.  In other words,
\[f(z+\zeta_1)=\prod_{\eta}(z-\eta)\in \overline{K}[z],\]
where the product is taken over roots of $f(z+\zeta_1)$ with the appropriate multiplicities.  We now apply Lemma~\ref{lem:affine conjugacy}.
\end{proof}

We can now complete the proof of Theorem~\ref{th:benedetto}.
\begin{proof}[Proof of Theorem~\ref{th:benedetto}]
If $\rho_v(f)= 1$ for all $v$, then by Lemma~\ref{lem:benedetto isotrivial} the map $\phi$ is isotrivial.  We shall assume, then, that this is not the case. 
Let $s\geq 1$ denote the number of places $v$ such that $\rho_v(f)>1$, and suppose that
\[X_0=\left\{P\in \affine^2(F):\hat{h}_\phi(P)=0\right\}\] contains at least $(3d^{6})^s+1$ elements.  Note that $X_0\subseteq \julia_{v, \phi}$ for each $v\in M_F$, and so for every place with $\rho_v(f)=1$, we have
\begin{equation}\label{eq:good reduction bound}|x_1-x_2|_v, |y_1-y_2|_v\leq 1\end{equation}
for any $(x_1, y_1), (x_2, y_2)\in X_0$, by Lemma~\ref{lem:benedetto filled julia}.
We suppose, at first, that there is a horizontal line $L\subseteq\affine^2(F)$  such that $Z=L\cap X_0$ contains at least $d^s+1$ elements.  Then, by $s$ applications of Lemma~\ref{lem:benedetto constant y lemma}, there exist at least two elements $(x_1, y_L), (x_2, y_L)\in Z$ such that
\[0<|x_1-x_2|_v<1\]
for every place $v$ with $\rho_v(f)>1$, the lower bound following from $(x_1, y_L)\neq (x_2, y_L)$.  Applying \eqref{eq:good reduction bound} at the remaining places,  we obtain
\[\prod_{v\in M_F}|x_1-x_2|_v<1,\]
an obvious contradiction to the product formula.  It must be the case, then, that any horizontal line in $\affine^2(F)$ meets $X_0$ in at most $d^s$ points.
 
Now, by $s$ applications of Lemma~\ref{lem:benedetto main lemma}, we may choose a subset $X_1\subseteq X_0$
such that
\begin{equation}\label{eq:product of three}
|y_{\phi^{-1}(P_1)}-y_{\phi^{-1}(P_2)}|_v\cdot|y_{P_1}-y_{P_2}|_v\cdot|y_{\phi(P_1)}-y_{\phi(P_2)}|_v< 1
\end{equation}
for all $P_1, P_2\in X_1$, and such that $\# X_1\geq X_0/(3d^3)^s>d^{3s}$.
Since at most $d^s$ of these points lie on any given horizontal line, we may choose a subset $X_2\subseteq X_1$ with $\# X_2\geq \# X_1/d^s$ such that $y_{P_1}\neq y_{P_2}$, for any distinct points $P_1, P_2\in X_2$.  Applying the same argument to $\phi(X_2)\subseteq X_0$, we may choose a subset $X_3\subseteq X_2$ such that $y_{\phi(P_1)}\neq y_{\phi(P_2)}$, for distinct $P_1, P_2\in X_3$, and such that $\# X_3\geq \# X_2/d^s>d^s$.  Finally, applying the same argument to $\phi^{-1}(X_3)\subseteq X_0$, we may choose an $X_4\subseteq X_3$ containing at least 2 distinct points $P_1, P_2$, such that $y_{P_1}\neq y_{P_2}$, $y_{\phi(P_1)}\neq y_{\phi(P_2)}$, and $y_{\phi^{-1}(P_1)}\neq y_{\phi^{-1}(P_2)}$.  For these two points we have \eqref{eq:product of three} at every place with $\rho_v(f)>1$, and \eqref{eq:good reduction bound} elsewhere, and so
\[
\prod_{v\in M_F}|y_{\phi^{-1}(P_1)}-y_{\phi^{-1}(P_2)}|_v\cdot|y_{P_1}-y_{P_2}|_v\cdot|y_{\phi(P_1)}-y_{\phi(P_2)}|_v<1.\]
But applying the product formula for the three non-zero elements $y_{P_1}-y_{P_2}$, $y_{\phi(P_1)}-y_{\phi(P_2)}$, and $y_{\phi^{-1}(P_1)}-y_{\phi^{-1}(P_2)}$ of $F$ contradicts this.  It follows that there were no more than $(3d^{6})^s$ points $P\in \affine^2(F)$ satisfying $\hat{h}_\phi(P)=0$.
\end{proof}

\section{Quadratic H\'{e}non maps, and the proof of Theorem~\ref{th:lang}}\label{sec:lang proof}

The proof of Theorem~\ref{th:lang} proceeds along similar lines to that of the main result of \cite{ads_lang}, which in turn is inspired by a result of Silverman \cite{silv_lang}.  The proof also bears resemblance to the proof of Theorem~\ref{th:benedetto}, relying on the same basic ideas.

  Throughout this section, $K$ will be either a number field, or a function field, $M_K$ will denote its set of places. We will denote the \emph{local degree} at $v\in M_K$ by $n_v$, where this is 1 if $K$ is a function field, and $n_v=\frac{[K_v:\QQ_v]}{[K:\QQ]}$ if $K$ is a number field. We assume that each valuation on $K$ has been extended in some way to the algebraic closure, and take \[\phi(x, y)=(y, x+y^2+b),\] for some $b\in K$.  Our first lemma is a slight improvement of Lemma~\ref{lem:local heights}, and follows from essentially the same argument.  Although the sharper bounds are not fundamentally needed in the proof of Theorem~\ref{th:lang}, they allow for numerically stronger results, and make the symmetry of this case somewhat more obvious.
We re-define, for this section only,
\[\basin^+_v(\phi)=\{(x, y)\in \affine^2(K):|y|_v^2>(3)_v\max\{|x|_v, |b|_v, 1\}\},\]
and
\[\basin^-_v(\phi)=\{(x, y)\in \affine^2(K):|x|_v^2>(3)_v\max\{|y|_v, |b|_v, 1\}\}.\]

\begin{lemma}\label{lem:local heights quadratic}
The set $\basin_v^{+}(\phi)$ is closed under the action of $\phi$, and
\[\hat{\lambda}^+_{v,\phi}(x, y)=\log|y|_v+\epsilon^+(b, P, v)\] for $(x, y)\in\basin^{+}_v(\phi)$,
where $\epsilon^+=0$ for $v\in M_K^{0}$, and $-\log 3\leq \epsilon^+\leq \log 5/3$ otherwise.
Similarly, the set $\basin_v^{-}(\phi)$ is closed under the action of $\phi^{-1}$; and
\[\hat{\lambda}^-_{v,\phi}(x, y)=\log|x|_v+\epsilon^-(b, P, v)\] for $(x, y)\in\basin^{-}_v(\phi)$,
where $\epsilon^-=0$ for $v\in M_K^{0}$, and $-\log 3\leq \epsilon^-\leq \log 5/3$ otherwise.
\end{lemma}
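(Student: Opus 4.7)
My plan is to specialize the proof of Lemma~\ref{lem:local heights} to the case $d=2$, $a=1$, $f(y)=y^2+b$, tracking constants more tightly using the sharper hypothesis $(3)_v$ in place of $(d+2)_v=4$. The structure of the argument is identical; only the numerics change. In the non-archimedean case the basin condition reads $|y|_v^2 > \max\{|x|_v, |b|_v, 1\}$, so $|y^2|_v$ strictly dominates both $|x|_v$ and $|b|_v$, and the ultrametric inequality yields $|x+y^2+b|_v = |y|_v^2$ exactly. A direct check then confirms $\phi(x,y) \in \basin^+_v$, and induction gives $\|\phi^N(P)\|_v = |y|_v^{2^N}$, whence $\hat{\lambda}^+_{v,\phi}(P) = \log|y|_v$ with $\epsilon^+ = 0$.

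In the archimedean case, the hypothesis $|y|_v^2 > 3\max\{|x|_v,|b|_v,1\}$ forces $|x|_v, |b|_v < |y|_v^2/3$, and the triangle inequality gives
\[\tfrac{1}{3}|y|_v^2 \;<\; |y|_v^2 - |x|_v - |b|_v \;\leq\; |x+y^2+b|_v \;\leq\; |y|_v^2 + |x|_v + |b|_v \;<\; \tfrac{5}{3}|y|_v^2.\]
This is the direct analogue of the double inequality in the proof of Lemma~\ref{lem:local heights}, with $1/(d+2)=1/4$ and $(2d+3)/(d+2)=7/4$ replaced by $1/3$ and $5/3$. Following the same strategy, I would verify that $\basin^+_v$ is closed under $\phi$, so the iterates $(x_N, y_N) = \phi^N(P)$ remain in the basin and satisfy $|y_{N+1}|_v \in (\tfrac{1}{3}|y_N|_v^2,\ \tfrac{5}{3}|y_N|_v^2)$. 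Writing $a_N = \log|y_N|_v$ and solving the recurrence $a_{N+1} = 2a_N + \delta_N$ with $\delta_N \in (-\log 3,\ \log\tfrac{5}{3})$ yields $a_N/2^N \to a_0 + \sum_{k\geq 0} 2^{-k-1}\delta_k$; since the coefficients $2^{-k-1}$ sum to $1$, the error lies in $[-\log 3,\ \log(5/3)]$, giving the claimed bounds on $\epsilon^+$.

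The corresponding statements for $\basin^-_v$ and $\hat{\lambda}^-_{v,\phi}$ follow symmetrically by applying the same argument to $\phi^{-1}(x,y) = (y-x^2-b,\, x)$, which, thanks to $a=1$, has exactly the same form as $\phi$ with the two coordinates interchanged. The main delicate point is verifying closure of $\basin^+_v$ in the archimedean case when $|y|_v^2$ is only slightly greater than $3\max\{|x|_v,|b|_v,1\}$: near the boundary one must combine the lower bound $|x+y^2+b|_v > |y|_v^2/3$ with the rapid doubling growth of $|y_N|_v$ to argue that the iterates quickly escape the boundary region and continue to satisfy the basin condition at each subsequent step. Everything else is routine bookkeeping.
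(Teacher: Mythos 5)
Your plan --- specialize the argument of Lemma~\ref{lem:local heights} to $d=2$, $a=1$, $f(y)=y^2+b$ and retrace the bounds with $(3)_v$ in place of $(d+2)_v$ --- is exactly what the paper intends; the paper gives no separate proof beyond the remark that Lemma~\ref{lem:local heights quadratic} ``follows from essentially the same argument.'' Your non-archimedean computation and your recursion for $\epsilon^+$ (solving $a_{N+1}=2a_N+\delta_N$ with $\delta_N\in(-\log 3,\log\frac{5}{3})$ and summing the geometric weights) are correct, and you rightly single out archimedean closure as the delicate point.

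However, the way you dispose of that delicate point does not work, and in fact straightforward closure of $\basin^+_v$ under $\phi$ \emph{fails} for the $(3)_v$-basin at an archimedean place. A concrete obstruction: take $b=-1$ and $P=(-1,\sqrt{3+\epsilon})$ for small $\epsilon>0$. Then $|y|_v^2=3+\epsilon>3\max\{1,1,1\}$, so $P\in\basin^+_v$, but $\phi(P)=\bigl(\sqrt{3+\epsilon},\,1+\epsilon\bigr)$, and $(1+\epsilon)^2<3\sqrt{3+\epsilon}$, so $\phi(P)\notin\basin^+_v$. The root of the difficulty is that the lower bound $|x+y^2+b|_v>\tfrac{1}{3}|y|_v^2$, when squared, only gives $|y|_v^4/9$, and one needs this to beat $3\max\{|y|_v,|b|_v,1\}$; when $\sqrt{3}<|y|_v<3$ the term $|y|_v$ can dominate and the inequality can go the wrong way. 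Your suggested fix --- that ``iterates quickly escape the boundary region'' --- cannot be invoked, because once $\phi(P)$ has left $\basin^+_v$ you no longer control $\phi^2(P)$ by this inductive argument at all. (In the example above the orbit does eventually re-enter the basin, but showing that, and re-assembling the $\epsilon^+$ bound through the excursion, is a genuinely different argument from the one-line induction.) Contrast this with Lemma~\ref{lem:local heights}, where the factor $(d+2)_v$ applied to $|x|_v^{1/d}$ amounts, after squaring for $d=2$, to $|y|_v^2>16\max\{|x|_v,|b|_v,1\}$; with $16$ in place of $3$ the same counterexample type is blocked and closure does hold. So a correct proof along the lines you propose must either enlarge the constant (e.g.\ use $(4)_v$ meaning $|y|_v^2>16\max\{\cdot\}$, giving weaker but still adequate bounds on $\epsilon^\pm$) or supply an explicit argument that orbits starting in the $(3)_v$-basin return to it after boundedly many steps with controlled distortion --- neither of which your sketch provides. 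This gap is arguably inherited from the paper's terse ``same argument'' remark, but a complete write-up cannot pass over it.
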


We will make use of the following simple result, which shows that points not in $\basin^+_v(\phi)\cup\basin^-_v(\phi)$ must cluster $v$-adically.  This result is similar in flavour to Lemma~\ref{lem:benedetto filled julia} above.
\begin{lemma}\label{lem:close to root}
Let $P=(x, y)\not\in\basin^+_v(\phi)\cup\basin^-_v(\phi)$.  Then \[\|x, y\|_v\leq \max(3)_v\{1, |b|_v\}^{1/2}.\]  If, in addition, $\phi^{-1}(P), \phi(P)\not\in\basin^+_v(\phi)\cup\basin^-_v(\phi)$, then there are roots $\gamma_1^2=-b$ and $\gamma_2^2=-b$ such that \[|x-\gamma_2|_v, |y-\gamma_1|_v\leq (12)_v|2|_v^{-1}.\]
\end{lemma}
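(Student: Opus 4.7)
The plan is to extract two separate estimates directly from the defining inequalities for $\basin^\pm_v(\phi)$. For the first bound, the conditions $P\not\in\basin^+_v(\phi)$ and $P\not\in\basin^-_v(\phi)$ give the pair
\[|y|_v^2\leq (3)_v\max\{|x|_v,|b|_v,1\},\qquad |x|_v^2\leq (3)_v\max\{|y|_v,|b|_v,1\}.\]
I would argue by cases according to whether $|x|_v$ and $|y|_v$ exceed $\max\{1,|b|_v\}$. If both are at most $\max\{1,|b|_v\}$, the claim is immediate; if only one of them does, the opposing inequality pins it down by $(3)_v\max\{1,|b|_v\}^{1/2}$; and if both exceed $\max\{1,|b|_v\}$, substituting one inequality into the other forces both coordinates into a ball of size $O(1)$, which still satisfies the claimed bound.

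For the second bound, the key computation is $\phi(P)=(y,\,x+y^2+b)$ and $\phi^{-1}(P)=(y-x^2-b,\,x)$. The assumption $\phi(P)\not\in\basin^+_v(\phi)$ gives $|x+y^2+b|_v^2\leq (3)_v\max\{|y|_v,|b|_v,1\}$; combining with the first-part bound on $|x|_v$, the triangle inequality yields $|y^2+b|_v\leq C\max\{1,|b|_v\}^{1/2}$ for an explicit constant $C$ (different at archimedean and non-archimedean places). An analogous calculation using $\phi^{-1}(P)\not\in\basin^-_v(\phi)$ produces $|x^2+b|_v\leq C\max\{1,|b|_v\}^{1/2}$.

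To convert smallness of $y^2+b$ into proximity of $y$ to a square root of $-b$, I would factor $y^2+b=(y-\gamma_1)(y+\gamma_1)$ and take $\gamma_1$ to be the root of $z^2=-b$ closer to $y$. From $2\gamma_1=(y+\gamma_1)-(y-\gamma_1)$, the triangle (or ultrametric) inequality gives $|y+\gamma_1|_v\geq |2|_v|\gamma_1|_v/(2)_v=|2|_v|b|_v^{1/2}/(2)_v$, hence
\[|y-\gamma_1|_v=\frac{|y^2+b|_v}{|y+\gamma_1|_v}\leq \frac{(2)_v\,C\,\max\{1,|b|_v\}^{1/2}}{|2|_v\,|b|_v^{1/2}},\]
which simplifies to the claimed $(12)_v|2|_v^{-1}$ bound when $|b|_v\geq 1$; in the regime $|b|_v<1$ the cruder estimate $|y-\gamma_1|_v\leq |y|_v+|\gamma_1|_v$ already lies well below $(12)_v|2|_v^{-1}$. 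The argument for $|x-\gamma_2|_v$ is identical, with the roles of $\phi(P)$ and $\phi^{-1}(P)$ swapped. The only real difficulty is bookkeeping the constants $(3)_v$, $(12)_v$, $(2)_v$, and $|2|_v$ cleanly across the archimedean and non-archimedean cases so that the final numerical constant matches $(12)_v|2|_v^{-1}$; there is no conceptual obstacle beyond this careful case split.
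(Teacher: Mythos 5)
Your proposal follows essentially the same route as the paper. For the first bound, both arguments just chain the two defining inequalities for $\basin^\pm_v$ against each other; the paper packages this as a contradiction by writing $|y|_v = \bigl((3)_v\max\{1,|b|_v\}^{1/2}\bigr)^c$ and ruling out $c>1$, while you cast it as a case split, but the algebra is identical. For the second bound, your argument is the one in the paper: use the first part to bound $|y^2+b|_v$ (and $|x^2+b|_v$) by $(6)_v\max\{1,|b|_v\}^{1/2}$, choose $\gamma_1$ to be the root of $z^2=-b$ closer to $y$, and use $2\gamma_1=(\gamma_1-y)+(\gamma_1+y)$ to bound $|y+\gamma_1|_v$ from below, with the crude estimate taking over when $|b|_v\leq 1$.

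One small caution in your first-part case analysis: when $|b|_v>1$, having $|x|_v,|y|_v\leq\max\{1,|b|_v\}=|b|_v$ is not ``immediate,'' since the target is the smaller quantity $(3)_v|b|_v^{1/2}$; you still need to feed the coarse bound back through the defining inequalities once more to recover the half-power. The same applies to the ``only one exceeds'' case, where the smaller coordinate is a priori only bounded by $|b|_v$ and must be re-bounded. These are bookkeeping lapses rather than a gap: the chain of inequalities you invoke does produce the correct bound after one more substitution, exactly as in the paper's $c>1$ contradiction.
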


Before proceeding with the proof of the lemma, we note that it follows from this that if $v\in M_K$ is non-archimedean, $v(b)<0$, $v(2)=0$, and $\phi$ has a  periodic point $(x, y)\in\affine^2(K)$, then $v(x)=v(y)=\frac{1}{2}v(b)$.  From this we conclude that $v(b)$ must be even, a fact which simplifies our calculations in Section~\ref{sec:computations} (see Lemma~\ref{lem:heights used for computations}).

\begin{proof}[Proof of Lemma~\ref{lem:close to root}]
We treat the non-archimedean case first.  Suppose that $|b|>1$, and that $|y|=((3)_v|b|^{1/2})^c$, for some $c>1$.  Then
\[(3)_v^{2c}|b|^{c}=|y|^2\leq (3)_v\max\{|b|, |x|\},\]
and so $|x|\geq (3)_v^{2c-1}|b|^{c}>1$.  But then it is also the case that
\[(3)_v^{4c-2}|b|^{2c}\leq |x|^2\leq (3)_v\max\{|b|, |y|\}=(3)_v|y|=(3)_v^{1+c}|b|^{c/2},\]
and so \[|b|^{3c/2}\leq (3)_v^{3c-3}|b|^{3c/2}\leq 1.\]  This contradicts $|b|>1$ and $c>1$.  The proof that $|x|\leq (3)_v|b|^{1/2}$ is identical.  

If, on the other hand, $|b|\leq 1$, then the inequalities $|y|^2\leq (3)_v\max\{1, |x|\}$ and $|x|^2\leq (3)_v\max\{1, |y|\}$ immediately imply $|x|, |y|\leq (3)_v$.

For the proof of the second part of the lemma.
Supposing that $\phi^{-1}(P), \phi(P)\not\in\basin^+_v(\phi)\cup\basin^-_v(\phi)$, and $|b|>1$,  we have
\[|y^2+b|\leq (2)_v\max\{|x|,|x+y^2+b|\}\leq (6)_v|b|^{1/2}.\]
Letting $\gamma_1^2=-b$, chosen without loss of generality so that $|y-\gamma_1|\leq |y+\gamma_1|$, we have
\[|2\gamma_1|=|(\gamma_1-y)+(\gamma_1+y)|\leq (2)_v|y+\gamma_1|,\]
and so
\[|y-\gamma_1|\leq \frac{(6)_v|b|^{1/2}}{|y+\gamma_1|}\leq \frac{(12)_v|b|^{1/2}}{|2\gamma_1|}=(12)_v|2|_v^{-1}.\]
A similar argument gives $|x-\gamma_2|\leq (12)_v|2|_v^{-1}$.

If $|b|\leq 1$, then the claim follows directly from the fact that
\[|y-\gamma_1|\leq (2)_v\max\{|y|, |\gamma_1|\}\leq (6)_v,\]
and similarly for $x$.
\end{proof}

We now come to the four main technical lemmas used in the proof of Theorem~\ref{th:lang}.  Before stating the lemmas, we introduce some 
 useful notation.  If $M, N\in \ZZ$, let
\[[M, N]=\{M, M+1, ..., N-1, N\},\]
and for $I\subseteq [M, N]$,
\[\conv(I)=[\min(I), \max(I)].\]
We will also use $P_j=(x_j, y_j)$ to denote $\phi^j(P)$.

\begin{lemma}\label{lem:two-sided}
Let $I\subseteq[-M, M]$ such that $\#I\geq 2$, and suppose that $v$ is archimedean, or non-archimedean with $|b|_v>1$.  Then there exists a subset $J\subseteq I$ with $\#J \geq \frac{1}{18}\# I-1$ such that for all $i\neq j\in J$,
\begin{equation}\label{eq:two-sided}\log|x_i-x_j|+\log|y_i-y_j|+\lambda(b)\leq 3\cdot 2^{M-1}\hat{\lambda}_{\phi, v}(P)+\alpha_v,\end{equation}
where by convention the inequality holds if $x_i=x_j$ or $y_i=y_j$, and where
\[
\alpha_v=\begin{cases}
18 & \text{if $v$ is archimedean}\\
6\log|2|_v^{-1} & \text{otherwise}.
\end{cases}
\]
\end{lemma}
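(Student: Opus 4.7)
The plan is to partition $I$ according to where the orbit $\{P_j\}$ sits with respect to the two basins $\basin^{\pm}_v(\phi)$, and then bound $|x_i - x_j|_v$ and $|y_i - y_j|_v$ by different means in each part.  First, I would set $N^+ = \min\{j \in [-M, M] : P_j \in \basin^+_v(\phi)\}$ and $N^- = \max\{j \in [-M, M] : P_j \in \basin^-_v(\phi)\}$, with the convention that these are $\pm\infty$ when the corresponding set is empty.  By the invariance statements in Lemma~\ref{lem:local heights}, $P_j \in \basin^+_v(\phi)$ for all $j \geq N^+$ and $P_j \in \basin^-_v(\phi)$ for all $j \leq N^-$.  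Partition $I$ into $I^+ = I \cap [N^+ + 1, M]$, $I^- = I \cap [-M, N^- - 1]$, and $I^0 = I \setminus (I^+ \cup I^-)$; at least one of the three has size $\geq \#I/3$, and the subset $J$ will be constructed inside whichever one is largest.

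If $I^+$ is the largest, then for every $j \in I^+$ both $P_j$ and $P_{j-1}$ lie in $\basin^+_v(\phi)$, and Lemma~\ref{lem:local heights quadratic} gives $\log|y_k|_v = 2^k \hat{\lambda}^+_{v,\phi}(P) + O(1)$ for $k \in \{j-1, j\}$.  Because $|y_k|_v$ grows doubly-exponentially inside $\basin^+_v$, for $i < j$ in $I^+$ one has $|y_i - y_j|_v \leq 2 |y_j|_v$ and $|x_i - x_j|_v = |y_{i-1} - y_{j-1}|_v \leq 2 |y_{j-1}|_v$ (the latter holding even when $i = N^+ + 1$, since $|y_{i-1}|_v$ is then bounded by $O(|b|_v^{1/2})$ via Lemma~\ref{lem:close to root}), so that
\[
\log|x_i - x_j|_v + \log|y_i - y_j|_v \leq (2^j + 2^{j-1})\hat{\lambda}^+_{v,\phi}(P) + O(1) \leq 3 \cdot 2^{M-1}\hat{\lambda}_{v,\phi}(P) + O(1),
\]
with the $O(1)$ absorbed into $\alpha_v - \lambda(b)$.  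The case where $I^-$ is largest is symmetric via $\hat{\lambda}^-$.

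If $I^0$ is the largest, I would further restrict to $I^0 \cap [N^- + 2, N^+ - 2]$, losing at most four boundary indices but ensuring that for each surviving $j$ the triple $P_{j-1}, P_j, P_{j+1}$ avoids $\basin^+_v \cup \basin^-_v$.  Lemma~\ref{lem:close to root} then places each of $x_j$ and $y_j$ within $(12)_v |2|_v^{-1}$ of some root $\pm \gamma$ of $z^2 + b$; there are four sign-pair possibilities, so the largest subclass has size $\geq 1/4$ of what remains.  For any two indices $i, j$ in this subclass, the triangle or ultrametric inequality bounds $|x_i - x_j|_v$ and $|y_i - y_j|_v$ by constants depending only on $v$, whence $\log|x_i - x_j|_v + \log|y_i - y_j|_v + \lambda(b) \leq \alpha_v$ and \eqref{eq:two-sided} holds (since $\hat{\lambda}_{v,\phi}(P) \geq 0$).

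The zone partition combined with the sign-pair partition in $I^0$ produces $\#J \geq \#I/12 - O(1)$ before careful accounting; the factor $1/18$ then arises from excising the transition indices $N^{\pm}$ at which Lemma~\ref{lem:local heights quadratic} cannot be applied symmetrically to $P_j$ and its neighbour in the cases $I^{\pm}$, together with the four boundary indices dropped from $I^0$.  The main obstacle I expect is precisely this bookkeeping, and in particular pinning down the right meaning of $\lambda(b)$ so that it uniformly absorbs the contribution of the roots of $z^2 + b$ (whose absolute values are $|b|_v^{1/2}$ when $|b|_v > 1$) across all three cases, keeping the residual constant within $\alpha_v$.
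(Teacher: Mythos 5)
There is a genuine gap in your treatment of the $I^0$ case, which is where the real work happens.  You observe that after the sign-pair pigeonhole, two indices $i,j\in J$ have $x_i,x_j$ within $(12)_v|2|_v^{-1}$ of the same root $\gamma$ and $y_i,y_j$ within $(12)_v|2|_v^{-1}$ of the same root $\gamma'$, and you conclude $|x_i-x_j|_v$ and $|y_i-y_j|_v$ are bounded by constants.  But a constant bound on $\log|x_i-x_j|_v+\log|y_i-y_j|_v$ cannot cancel $\lambda(b)=\log^+|b|_v$, which is unbounded.  To get \eqref{eq:two-sided} with $\hat{\lambda}_{v,\phi}(P)\geq 0$ on the right, you need the individual factors to decay like $|b|_v^{-1/2}$, not to be $O(1)$.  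The paper achieves this by exploiting the orbit relation $y_{j+1}=x_j+y_j^2+b$: subtracting, $|y_i^2-y_j^2|\leq (2)_v\max\{|y_{i+1}-y_{j+1}|,|x_i-x_j|\}$ is small, and since $y_i,y_j$ lie near the \emph{same} root $\gamma'$ one has $|y_i+y_j|\asymp|2\gamma'|_v\asymp|b|_v^{1/2}$, so dividing gives $|y_i-y_j|_v\lesssim |b|_v^{-1/2}$.  The same device, using $x_{j-1}=y_j-x_j^2-b$, handles $|x_i-x_j|_v$.  Your proposal omits this entirely, and indeed you flag $\lambda(b)$ bookkeeping as "the main obstacle" without resolving it.

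A consequence is that your pigeonhole uses only two roots (one for $x_j$, one for $y_j$) and hence 4 classes, but the $|y_i^2-y_j^2|$ and $|x_i^2-x_j^2|$ estimates need $|y_{i+1}-y_{j+1}|_v$ and $|x_{i-1}-x_{j-1}|_v$ to also be small, so you must additionally pin down roots near $y_{j+1}$ and $x_{j-1}$; that is where the paper's 16-way pigeonhole (and hence the $1/18$, after dropping four boundary indices) comes from.  There is also a smaller bookkeeping slip in the $I^+$ case: you claim the $O(1)$ is "absorbed into $\alpha_v-\lambda(b)$," but $\lambda(b)$ is unbounded so there is nothing to absorb into; the paper instead bounds $\lambda(b)$ itself by $2^M\hat{\lambda}^+_{v,\phi}(P)+O(1)$, using that $P_j\in\basin^+_v(\phi)$ forces $|y_j|_v^2>(3)_v|b|_v$.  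Your interval-based partition of $[-M,M]$ by the escape times $N^\pm$ is a clean alternative to the paper's subset pigeonhole for the three categories, but it does not by itself close either gap.
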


\begin{proof}
We first suppose that there is a subset $J_0\subseteq I$ such that $\# J_0\geq \frac{1}{18}\# I$, and such that $P_j\in\basin^+_v(\phi)$ for all $j\in J_0$.  Then for all $j\in J_0$, we have
\[
\log|y_j|\leq \hat{\lambda}_{v, \phi}^+(P_j)+(\log 3)_v\leq 2^M\hat{\lambda}_{v, \phi}^+(P)+(\log 3)_v.
\]
At the same time, as long as $j\neq\min(J_0)$, we have $P_{j-1}\in\basin_v^+(\phi)$, and so 
\[\log|x_j|=\log|y_{j-1}|\leq 2^{M-1}\lambda^{+}_{\phi, v}(P)+(\log 3)_v.\]
It is also the case that if $i\neq j$
\[\lambda(b)< 2\log\min\{|y_i|, |y_j|\}-(\log 3)_v\leq 2^M\lambda^{+}_{\phi, v}(P)+(\log 3)_v.\]
So, if we take $J=J_0\setminus\{\min(J_0)\}$, we have
\begin{eqnarray*}
\log|x_i-x_j|+\log|y_i-y_j|+\lambda(b)&\leq& \log\max\{|x_i|,|x_j|\}+\log\max\{|y_i|,|y_j|\}\\&&+\lambda(b)+(\log 4)_v\\
&\leq &2^M\hat{\lambda}_{v, \phi}^+(P)+2^{M-1}\lambda^{+}_{\phi, v}(P)\\&&+2^M\lambda^{+}_{\phi, v}(P)+(\log 108)_v\\
&\leq &3\cdot 2^{M}\hat{\lambda}_{\phi, v}(P)+(\log 108)_v,
\end{eqnarray*}
for all $i\neq j\in J$.  We also note that $\# J\geq \frac{1}{18}\#I-1$.

A similar argument shows the required inequality in the case that there is a subset $J_0\subseteq I$ such that $\# J_0\geq \frac{1}{18}\# I$ and such that $P_j\in \basin_v^-(\phi)$ for all $i\in J_0$.  So we will assume that no such set exists.  It follows that there is a subset $J_0\subseteq I$ with $\#J_0\geq \frac{8}{9}\#I$, such that $P_j\not\in\basin^{+}_v(\phi)\cup\basin^{-}_v(\phi)$ for all $j\in J_0$.
Then for all but at most four elements $j\in \conv(J_0)$, we have \[P_{j-2}, P_{j-1}, P_j, P_{j+1}, P_{j+2}\in\affine^2(K)\setminus( \basin^{+}_v(\phi)\cup\basin^{-}_v(\phi)).\]  By Lemma~\ref{lem:close to root}, we may choose for each $j\in [\min(J_0)+2, \max(J_0)-2]$ roots \[\gamma_{1, j}^2=\gamma_{2, j}^2=\gamma_{3, j}^2=\gamma_{4, j}^2=-b\] such that
\[|x_j-\gamma_{1, j}|, |x_{j-1}-\gamma_{2, j}|, |y_j-\gamma_{3, j}|, |y_{j+1}-\gamma_{4, j}|\leq (12)_v|2|_v^{-1}.\]
By the pigeonhole principle, there is a subset $J\subseteq J_0$ with \[\# J\geq  \frac{1}{16}(\# J_0-4)\geq \frac{1}{18}\# I-1\] such that $\gamma_{n, j}$ is the same for all $j\in J$, for each $n$.  It follows that
\[|x_i-x_j|, |y_i-y_j|, |x_{i-1}-x_{j-1}|, |y_{i+1}-y_{j+1}|\leq (24)_v|2|_v^{-1}\]
for all $i, j\in J$.  Now,  for $i, j\in J$,
\begin{eqnarray*}
\left|y_i^2-y_j^2\right|&\leq& (2)_v\max\left\{|y_i^2-y_j^2+x_i-x_j|, |x_i-x_j|\right\}\\
&=&(2)_v\max\left\{|y_{i+1}-y_{j+1}|, |x_i-x_j|\right\}\\&\leq& (48)_v|2|_v^{-1}.
\end{eqnarray*}
Now, if $v$ is non-archimedean, and $|2|_v=1$, then
\[|y_i+y_j|=\max\{|2y_j|, |y_i-y_j|\}=|b|_v^{1/2},\] which means that $|y_i-y_j|\leq |b|^{-1/2}$.  Similarly 
\[|x_i^2-x_j^2+y_j-y_i|=|x_{i-1}-x_{j-1}|\leq 1,\]  which gives $|x_i-x_j|\leq |b|^{-1/2}$.
Combining these gives
\[\log|x_i-x_j|+\log|y_i-y_j|+\log|b|\leq 0,\]
for all $i, j\in J$.

If $v$ is non-archimedean, but $|2|_v<1$, then we consider two cases.  If $|b|_v>|2|_v^{-4}$, then
$|y_j-\gamma_{3, j}|\leq |2|_v^{-1}$ gives $|y_j|=|\gamma_{3, j}|=|b|^{1/2}>|2|_v^{-2}$.  By the argument above, this then gives $|y_j+y_i|=|2y_j|=|2|_v|b|^{1/2}$, and so
\[|y_j-y_i|\leq |2|_v^{-2}|b|^{-1/2}.\]
If, on the other hand, $|b|_v\leq |2|_v^{-4}$, then we have at once
\[|y_j-y_i|\leq |2|_v^{-1}\leq |2|_v^{-3}|b|^{-1/2}.\]
Obtaining the same estimates for $|x_j-x_i|$, we have
\[\log|x_j-x_i|+\log|y_j-y_i|+\log|b|\leq 6\log|2|_v^{-1}\]
for all $i, j\in J$.

Finally, if $v$ is archimedean, we again have two cases.  If $|b|_v> 24^2$, then we have
\[|b|^{1/2}=|\gamma_{3, j}|\leq |y_j| + |y_j-\gamma_{3, j}|\leq |y_j|+ 6\leq |y_j|+\frac{1}{4}|b|^{1/2},\]
and so $|y_j|\geq \frac{3}{4}|b|^{1/2}$.  It follows that
\[|y_j+y_i|\geq |2y_j|-|y_j-y_i|\geq \frac{3}{4}|2b|^{1/2}-12\geq \frac{1}{4}|b|^{1/2}.\]
From this we obtain
\[|y_j-y_i|\leq \frac{24}{|y_j+y_i|}\leq 96|b|^{-1/2}.\]
If, on the other hand, $|b|_v\leq 24^2$, then
\[|y_j-y_i|\leq 12\leq  6912 |b|^{-1/2}.\]
Obtaining the same estimates for $|x_j-x_i|$, we have
\[\log|x_j-x_i|+\log|y_j-y_i|+\log|b|\leq 2\log 6912\]
for all $i, j\in J$.

In each case, the estimate \eqref{eq:two-sided} now follows from the fact that $\hat{\lambda}_\phi(P)$ is non-negative.
\end{proof}

\begin{lemma}\label{lem:constant y}
Let $I\subseteq[-M, M]$ such that $\#I\geq 2$, and $y_i=y_j$  for all $i, j\in I$.  Then there exists a subset $J\subseteq I$ with $\#J \geq \frac{1}{5}\# I-1$ such that for all $i, j\in J$,
\begin{equation}\label{eq:constant y}\log|x_i-x_j|+\frac{1}{2}\lambda(b)\leq 2^{M+1}\hat{\lambda}_{\phi, v}^-(P)+\beta_v,\end{equation}
where by convention the inequality holds if $x_i=x_j$, and
\[\beta_v=\begin{cases}
8 & \text{if $v$ is archimedean}\\
2\log|2|_v^{-1}& otherwise.
\end{cases}\]
\end{lemma}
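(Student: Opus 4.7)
The plan is to follow the structure of the proof of Lemma~\ref{lem:two-sided}, partitioning $I$ according to where $P_j=(x_j,y_j)$ sits relative to $\basin^+_v(\phi)$ and $\basin^-_v(\phi)$. The constancy of $y_j$ on $I$ simplifies part of the argument: in place of the $y_i^2-y_j^2$ manipulation used in the proof of Lemma~\ref{lem:two-sided}, I will work with the identity $x_{k-1}=y-x_k^2-b$, which comes from evaluating $\phi^{-1}(x,y)=(y-x^2-b,x)$ at $P_k$ with $y_k=y$.

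First, suppose at least $\tfrac{1}{5}\#I$ of the indices $j\in I$ satisfy $P_j\in\basin^-_v(\phi)$; I would take $J$ to be this subset. Lemma~\ref{lem:local heights quadratic} then yields $\log|x_j|_v\leq\hat{\lambda}^-_{v,\phi}(P_j)+(\log 3)_v\leq 2^M\hat{\lambda}^-_{v,\phi}(P)+(\log 3)_v$, using $\hat{\lambda}^-_{v,\phi}(P_j)\leq 2^M\hat{\lambda}^-_{v,\phi}(P)$ for $|j|\leq M$ (which follows from $\hat{\lambda}^-_{v,\phi}(\phi^{-1}P)=2\hat{\lambda}^-_{v,\phi}(P)$). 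Simultaneously, the basin condition $|x_j|_v^2>(3)_v|b|_v$ forces $\tfrac{1}{2}\log|b|_v\leq\log|x_j|_v-\tfrac{1}{2}(\log 3)_v$. Combining these two bounds with $\log|x_i-x_j|_v\leq(\log 2)_v+\max\{\log|x_i|_v,\log|x_j|_v\}$ gives \eqref{eq:constant y} immediately, with $\beta_v$ absorbing the archimedean and logarithmic constants.

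Otherwise most of the $P_j$ lie outside $\basin^-_v(\phi)$, and I next argue that only a bounded set of $j\in I$ can have $P_j\in\basin^+_v(\phi)$. In the non-archimedean case, $P_j\in\basin^+_v(\phi)$ forces $|y|_v>1$, and by the computation in the proof of Lemma~\ref{lem:local heights quadratic} one has $|y_{j+k}|_v=|y|_v^{2^k}\neq|y|_v$ for $k\geq 1$; thus no $j'>j$ in $I$ can satisfy $y_{j'}=y$, so at most one such $j$ exists. In the archimedean case, two such indices would force $|y|_v,|b|_v,|x_j|_v$ into absolute intervals (using the quadratic bounds from the proof of Lemma~\ref{lem:local heights quadratic}), whence \eqref{eq:constant y} holds directly with $\beta_v=8$ for those $j$.

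What remains, then, is the main case where a proportion close to $\tfrac{4}{5}$ of $j\in I$ have $P_j\not\in\basin^+_v(\phi)\cup\basin^-_v(\phi)$. Removing the $O(1)$ endpoint indices so that $P_{j-2},P_{j-1},P_j,P_{j+1}$ all lie outside the basins for each surviving $j$, I apply Lemma~\ref{lem:close to root} at both $P_j$ and $P_{j-1}$ to obtain roots $\gamma_{2,j},\gamma''_{j-1}$ of $-b$ with $|x_j-\gamma_{2,j}|_v,|x_{j-1}-\gamma''_{j-1}|_v\leq(12)_v|2|_v^{-1}$. Pigeonholing over these root choices produces a subset $J$ with $\#J\geq\tfrac{1}{5}\#I-O(1)$ on which both roots are constant, say $\gamma_2$ and $\gamma''$; for $i,j\in J$, the identity $x_{i-1}-x_{j-1}=-(x_i-x_j)(x_i+x_j)$, combined with the lower bound $|x_i+x_j|_v=|2|_v|b|_v^{1/2}$ (valid when $|b|_v>|2|_v^{-4}$, via $x_i+x_j=2\gamma_2+(x_i-\gamma_2)+(x_j-\gamma_2)$) and the bound $|x_{i-1}-x_{j-1}|_v\leq(24)_v|2|_v^{-1}$, gives $|x_i-x_j|_v\leq(24)_v|2|_v^{-2}|b|_v^{-1/2}$, which rearranges to \eqref{eq:constant y}. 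I expect the main obstacle to be the dyadic regime $|2|_v<1$ with $|b|_v\leq|2|_v^{-4}$: there the $|b|_v^{-1/2}$ factor is not directly produced by the difference-of-squares trick, and one must instead bound $|x_i-x_j|_v$ directly and absorb $\tfrac{1}{2}\lambda(b)$, requiring the same careful balancing of $|2|_v$-factors against $|b|_v^{-1/2}$-factors that appears in the end of the proof of Lemma~\ref{lem:two-sided}, and this is precisely what produces the constant $\beta_v=2\log|2|_v^{-1}$.
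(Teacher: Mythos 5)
Your proposal follows the paper's overall case structure exactly (the $\basin^-_v$ case, the observation that at most one index lies in $\basin^+_v$ when $y$ is constant, and the main case via Lemma~\ref{lem:close to root} plus pigeonhole), but it diverges from the paper at the key technical step of the main case, and that divergence introduces a gap in the dyadic regime.

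In the main case the paper does \emph{not} use the identity $x_{i-1}-x_{j-1}=-(x_i-x_j)(x_i+x_j)$ as you do. Instead it sets $\delta^2 = y-b+\gamma$ (so $|x_j^2-\delta^2|=|x_{j-1}-\gamma|_v\leq (12)_v|2|_v^{-1}$) and applies the ultrametric inequality in the form
\[
\max\{|x_j-\delta|_v,\,|x_j+\delta|_v\}\ \geq\ |2\delta|_v=|2|_v|b|_v^{1/2},
\]
which holds \emph{unconditionally} for each individual $j$. Since the product $|x_j-\delta|_v\cdot|x_j+\delta|_v\leq|2|_v^{-1}$, this forces $\min\{|x_j\pm\delta|_v\}\leq |2|_v^{-2}|b|_v^{-1/2}$ for every $j$, regardless of how small $|b|_v$ is; a final pigeonhole on the sign of $\delta$ then gives $|x_i-x_j|_v\leq |2|_v^{-2}|b|_v^{-1/2}$ and hence $\beta_v=2\log|2|_v^{-1}$ in all dyadic cases at once. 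Your version needs a lower bound on $|x_i+x_j|_v$ of the form $|2\gamma_2+(x_i-\gamma_2)+(x_j-\gamma_2)|_v$, which is only $\geq|2|_v|b|_v^{1/2}$ when $|b|_v>|2|_v^{-4}$; below that threshold the two error terms of size $\leq|2|_v^{-1}$ swamp $|2\gamma_2|_v$ and you get no useful lower bound at all. In that regime your fallback is the direct bound $|x_i-x_j|_v\leq|2|_v^{-1}$ together with $\tfrac{1}{2}\log|b|_v\leq 2\log|2|_v^{-1}$, which yields $3\log|2|_v^{-1}$, not the $2\log|2|_v^{-1}$ the lemma asserts. So your closing sentence — that the balancing from the end of Lemma~\ref{lem:two-sided} produces the constant $\beta_v=2\log|2|_v^{-1}$ here — is not correct: that balancing gives an extra $\log|2|_v^{-1}$ per log term (it was built to hit $6\log|2|_v^{-1}$ for a three-log expression), whereas here there are only one and a half logs and the budget is tighter. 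The point of the paper's $\delta$-trick is precisely that it bypasses this threshold entirely.

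This is a gap at the level of the stated constant only; the structure, the $\basin^-$ case, the $\basin^+$ exclusion argument, the archimedean cutoff ($|b|_v>75$ is enough to make $2|b|_v^{1/2}-12>0.6|b|_v^{1/2}$, so your archimedean computation does land under $8$), and the pigeonhole counts all check out. To close the gap, replace the difference-of-squares on $x_i\pm x_j$ with the paper's difference-of-squares on $x_j\pm\delta$ and pigeonhole on the sign of $\delta$ rather than on a pair of roots $\gamma_{2,j},\gamma''_{j-1}$.
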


\begin{proof}
First, suppose that there is a subset $J\subseteq I$ such that $\# J\geq \frac{1}{5}\# I$, and $P_j\in \basin^-_v(\phi)$ for all $j\in J$.  Then for any $i, j\in J$, we have
\begin{eqnarray*}
\log|x_i-x_j|+\frac{1}{2}\lambda_v(b)&\leq &\log\max\{|x_i|, |x_j|\}+\frac{1}{2}\lambda_v(b)+\log(2)_v\\
&\leq& 2\log\max\{|x_i|, |x_j|\}+\left(\log2-\frac{1}{2}\log 3\right)_v\\
&=&2\max\{2^i\hat{\lambda}_{\phi, v}^-(P), 2^j\hat{\lambda}_{\phi, v}^-(P)\}+\left(\frac{1}{2}\log 4/3\right)_v\\
&\leq & 2^{M+1}\hat{\lambda}^-_{\phi, v}(P)+\left(\frac{1}{2}\log 4/3\right)_v.
\end{eqnarray*}
In this case we are done, and so we will assume from this point forward that such a $J\subseteq I$ does not exist.

Now, we note that since $y_i=y_j$ for all $i, j\in I$, it must be the case that $P_i\not\in \basin^{+}_v(\phi)$, except perhaps for $i=\max(I)$, or if $v$ is archimedean and $|b|_v\leq 75$.  To see that this is true, note that if $(x_{i_1}, y), (x_{i_2}, y)\in \basin^{+}_v(\phi)$ with $i_2>i_1$, and $v$ is non-archimedean, then by Lemma~\ref{lem:local heights quadratic} we have
\begin{eqnarray*}
2^{i_2}\hat{\lambda}_{\phi, v}^+(P)&=&\hat{\lambda}_{\phi, v}^+(P_{i_2})\\
&=& \log|y|\\
&=&\hat{\lambda}_{\phi, v}^+(P_{i_1})\\
&=&2^{i_1}\hat{\lambda}_{\phi, v}^+(P).
\end{eqnarray*}
It follows immediately that $i_1=i_2$.  If $v$ is archimedean, and $i_2\geq i_1+1$, we have
\begin{eqnarray*}
\log|y|&\leq& \hat{\lambda}_{\phi, v}^+(P_{i_1})+\log 3\\&=&2^{i_1-i_2}\hat{\lambda}_{\phi, v}^+(P_{i_2})+\log 3\\
&\leq &2^{i_1-i_2}(\log|y|+\log5/3) +\log 3\\
&\leq & \frac{1}{2}\log|y| + \frac{1}{2}\log 15.
\end{eqnarray*}
It follows that
\[\frac{1}{2}\log^+(b)+\frac{1}{2}\log 3\leq \log |y|\leq \log 15,\]
and consequently $|b|\leq 75$.  In this case, we can choose a set $J\subseteq I$ with $\#J \geq \frac{4}{5}\#I$ and $P_j\not\in\basin^{-}_v(\phi)$ for all $j\in J$.  For these $j$, though, we then have
\[2\log |x_j|\leq \log\max\{1, |b|, |y|\}+\log 3\leq \log 225,\]
and so
\[\log |x_j-x_i|+\log^+|b|\leq \log\max\{|x_j|, |x_i|\}+\log^+|b|+\log 2\leq \log 2250\]
for all $i, j\in J$.  Since this verifies the claim, we will henceforth suppose that $P_i\not\in\basin^+_v(\phi)$, except possibly for $i=\max(I)$.

We have assumed that there is a subset $J_0\subseteq I$ with $\#J_0 >\frac{4}{5}\#I-1$, and \[P_j\in\affine^2(K)\setminus(\basin^+_v(\phi)\cup\basin^-_v(\phi))\] for all $j\in J_0$.
It follows that for all but at most two values $j\in \conv(J_0)$, we have \[P_{j-2}, P_{j-1}, P_{j}\in\affine^2(K)\setminus( \basin^+_v(\phi)\cup\basin^-_v(\phi)).\]
From Lemma~\ref{lem:close to root}, we see that for each such $j$,
\[|x_{j-1}-\gamma_j|\leq (12)_v|2|_v^{-1}\] for some root $\gamma_j^2=-b$.  We may then choose a subset $J_1\subseteq J_0$, with \[\# J_1\geq \frac{1}{2}(\# J_0-2)>\frac{2}{5}\# I-\frac{3}{2}\]  such that $|x_{j-1}-\gamma|\leq (12)_v|2|_v^{-1}$ for all $j\in J_1$, for one particular $\gamma^2=-b$ which does not depend on $j$.  Now, for $j\in J_1$, we have
\[ |x_{j}^2-(y-b+\gamma)|=|x_{j-1}-\gamma|\leq (12)_v|2|_v^{-1}.\]

First we treat the case in which $v$ is non-archimedean.
If $\delta^2=y-b+\gamma$, then $|\delta|=|b|^{1/2}$, and so
\[|2|_v|b|_v^{1/2}=|2\delta|_v=|\delta-x_j+\delta+x_j|_v\leq \max\{|x_j\pm \delta|_v\},\]
and so
\[\min\{|x_{j}\pm \delta|_v\}\leq |2|^{-2}_v|b|_v^{-1/2}.\]
We may now choose a subset $J\subseteq J_1$ with $\# J\geq \frac{1}{2}\# J_0$, such that \[|x_i-x_j|\leq |2|_v^{-2}|b|^{-1/2}\] for all $i, j\in J$.  It follows that for all such $i, j$,
\[\log|x_i-x_j|+\frac{1}{2}\lambda(b)\leq 0\leq 2^{M+1}\hat{\lambda}_{\phi, v}^-(P)+2\log|2|_v^{-1}.\]
We note that $\#J > \frac{1}{5}\# I-\frac{3}{4}$.

We proceed similarly if $v$ is archimedean.
If $\delta^2=y-b+\gamma$, then we have $|\gamma|=|b|^{1/2}$ and $|y|\leq 3|b|^{1/2}$ by Lemma~\ref{lem:close to root}.  Since we may suppose that $|b|>76$, we have
\[|y-b+\gamma|\geq |b|-4|b|^{1/2}\geq \frac{1}{2}|b|,\]
and so $|\delta|\geq \frac{1}{\sqrt{2}}|b|^{1/2}$.
We then have
\[\sqrt{2}|b|_v^{1/2}\leq |2\delta|_v=|\delta-x_j+\delta+x_j|_v\leq 2\max\{|x_j\pm \delta|_v\},\]
and so
\[\min\{|x_{j}\pm \delta|_v\}\leq\frac{6}{\max\{|x_j\pm \delta|_v\}}\leq 6\sqrt{2}|b|^{-1/2}.\]
We may now choose a subset $J\subseteq J_1$ with $\# J\geq \frac{1}{2}\# J_0$, such that \[|x_i-x_j|\leq 12\sqrt{2}|b|^{-1/2}\] for all $i, j\in J$.  It follows that for all such $i, j$,
\[\log|x_i-x_j|+\frac{1}{2}\lambda(b)\leq 0\leq 2^{M+1}\hat{\lambda}_{\phi, v}^-(P)+\frac{1}{2}\log 288.\]
We note again that $\#J > \frac{1}{5}\# I-\frac{3}{4}$.
\end{proof}

The proof of the following lemma is a straight-forward modification of the proof of Lemma~\ref{lem:constant y}.

\begin{lemma}\label{lem:constant x}
Let $I\subseteq[-M, M]$ such that $\#I\geq 2$,  and $x_i= x_j$ for all $i, j\in I$.  Then there exists a subset $J\subseteq I$ with $\#J \geq \frac{1}{5}\# I-1$ such that for all $i, j\in J$,
\begin{equation}\label{eq:constant x}\log|y_i-y_j|+\frac{1}{2}\lambda(b)\leq 2^{M+1}\hat{\lambda}_{\phi, v}^+(P)+\beta_v,\end{equation}
where by convention the inequality holds if $x_i=x_j$.
\end{lemma}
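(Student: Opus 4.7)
\textbf{Proof plan for Lemma~\ref{lem:constant x}.} The proof mirrors that of Lemma~\ref{lem:constant y} under the substitutions $x\leftrightarrow y$, $\basin^+_v(\phi)\leftrightarrow\basin^-_v(\phi)$, and $\hat{\lambda}^+\leftrightarrow\hat{\lambda}^-$, with the direction of iteration reversed (forward in place of backward).  This reversal compensates for the fact that we now exploit constancy of the $x$-coordinate rather than of the $y$-coordinate, and the recursion $x_{j+1}=y_j$ now relates the ``constant'' coordinate to the $y$-coordinate one step \emph{later} (whereas in Lemma~\ref{lem:constant y} it was one step earlier).

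In the first case, suppose some $J\subseteq I$ with $\#J\geq \tfrac{1}{5}\#I$ satisfies $P_j\in\basin^+_v(\phi)$ for all $j\in J$.  Lemma~\ref{lem:local heights quadratic} yields $\log|y_j|_v=2^j\hat{\lambda}^+_{v,\phi}(P)+O(1)$, and the defining inequality $|y_j|_v^2>(3)_v|b|_v$ of $\basin^+_v(\phi)$ gives $\tfrac{1}{2}\lambda(b)\leq \log|y_j|_v+O(1)$; combining these as in the first case of Lemma~\ref{lem:constant y} delivers the required bound.  Next I would rule out the possibility that many $P_j$ lie in $\basin^-_v(\phi)$: in the non-archimedean case $\hat{\lambda}^-_{v,\phi}(P_j)=\log|x|_v$ is independent of $j$, while $\hat{\lambda}^-_{v,\phi}(P_j)=2^{-j}\hat{\lambda}^-_{v,\phi}(P)$, forcing at most one admissible $j$ unless $\hat{\lambda}^-_{v,\phi}(P)=0$; in the archimedean case the same constraint confines $|b|_v$ to a bounded range in which a direct estimate suffices, paralleling the $|b|_v\leq 75$ exception in the proof of Lemma~\ref{lem:constant y}.

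In the residual case, $P_j\notin\basin^+_v(\phi)\cup\basin^-_v(\phi)$ for all $j$ in a subset $J_0\subseteq I$ of size at least $\tfrac{4}{5}\#I$.  For all but at most two $j\in\conv(J_0)$, the three consecutive iterates $P_j,P_{j+1},P_{j+2}$ are all outside the basins, so applying Lemma~\ref{lem:close to root} to $P_{j+1}$ yields a root $\gamma_j$ with $\gamma_j^2=-b$ and $|y_{j+1}-\gamma_j|_v\leq (12)_v|2|_v^{-1}$.  The recursion $y_{j+1}=x_j+y_j^2+b$ combined with $x_j=x$ then gives
\[|y_j^2-(\gamma_j-x-b)|_v = |y_{j+1}-\gamma_j|_v \leq (12)_v|2|_v^{-1}.\]
Pigeonholing over the two choices of $\gamma_j$ selects a common $\gamma$, and setting $\delta^2=\gamma-x-b$, the first part of Lemma~\ref{lem:close to root} provides $|x|_v,|\gamma|_v\ll |b|_v^{1/2}$, so $|\delta|_v\asymp |b|_v^{1/2}$.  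Factoring $y_j^2-\delta^2=(y_j-\delta)(y_j+\delta)$ and pigeonholing on the sign extracts a subset $J\subseteq J_0$ with $\#J\geq \tfrac{1}{5}\#I-1$ on which $|y_i-y_j|_v$ is controlled by an absolute (place-dependent) constant times $|b|_v^{-1/2}$.  Taking logarithms and using $\hat{\lambda}^+_{v,\phi}(P)\geq 0$ yields the inequality \eqref{eq:constant x}.

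The main obstacle is purely the numerical bookkeeping, especially at archimedean places, where the size estimate $|\delta|_v\asymp |b|_v^{1/2}$ requires a separate treatment in a bounded range of $|b|_v$.  All conceptual ingredients already appear in Lemmas~\ref{lem:close to root} and~\ref{lem:constant y}; the formal substitution of symbols listed in the first paragraph is the only thing new.
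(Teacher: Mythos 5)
Your proposal is correct and takes essentially the same route the paper does: the paper explicitly notes that Lemma~\ref{lem:constant x} follows from a straightforward modification of the proof of Lemma~\ref{lem:constant y}, and your plan carries out precisely that modification (swap $x\leftrightarrow y$, $\basin^+\leftrightarrow\basin^-$, $\hat\lambda^+\leftrightarrow\hat\lambda^-$, and reverse the direction of iteration so that the recursion $y_{j+1}=x_j+y_j^2+b$, with $x_j$ constant, plays the role of $x_{j-1}=y_j-x_j^2-b$). The three-case structure, the application of Lemma~\ref{lem:close to root} at $P_{j+1}$ (requiring $P_j,P_{j+1},P_{j+2}$ outside the basins), the pigeonholing over the root $\gamma$ and the sign of $\delta$, and the cardinality count $\#J\geq\frac{1}{5}\#I-1$ all match; the only small point worth noting is that the ``unless $\hat\lambda^-_{v,\phi}(P)=0$'' caveat in your second paragraph is vacuous, since $P_j\in\basin^-_v(\phi)$ forces $\log|x|_v>0$ and hence $\hat\lambda^-_{v,\phi}(P)>0$.
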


The three lemmas above treat the case of $v$ archimedean, or a place of bad reduction.  The final lemma treats the good reduction primes.
\begin{lemma}\label{lem:good reduction}
Suppose that $|b|\leq 1$ and that $v$ is non-archimedean.  Then for any $i, j\in [-M, M]$, we have
\[\log|x_i-x_j|\leq 2^{M+1}\hat{\lambda}_{\phi, v}(P)\]
and
\[\log|y_i-y_j|\leq 2^{M+1}\hat{\lambda}_{\phi, v}(P).\]
\end{lemma}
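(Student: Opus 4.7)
The plan is to reduce the coordinate-difference estimates to pointwise bounds on the iterates. Since $v$ is non-archimedean, the ultrametric inequality gives
\[\log|x_i - x_j|_v \leq \max\{\log^+|x_i|_v,\ \log^+|x_j|_v\}\]
and analogously for $y_i - y_j$, so it suffices to prove the single-coordinate estimate $\log^+|x_i|_v,\ \log^+|y_i|_v \leq 2^M \hat{\lambda}_{v, \phi}(P)$ for every $i \in [-M, M]$; the passage from $2^M$ to $2^{M+1}$ in the conclusion is then free.

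Next I would analyze $P_i = \phi^i(P)$ according to which basin it lies in. Under the hypotheses $|b|_v \leq 1$ and $v$ non-archimedean, the basin definitions of this section collapse to
\[\basin^+_v(\phi) = \{|y|_v^2 > \max(|x|_v, 1)\}, \quad \basin^-_v(\phi) = \{|x|_v^2 > \max(|y|_v, 1)\}.\]
If $P_i \in \basin^+_v(\phi)$, then Lemma~\ref{lem:local heights quadratic} gives $\log|y_i|_v = \hat{\lambda}^+_{v, \phi}(P_i)$ exactly, and iterating the functional equation $\hat{\lambda}^+_{v, \phi}(\phi(Q)) = 2\hat{\lambda}^+_{v, \phi}(Q)$ forward from $P$ yields $\hat{\lambda}^+_{v, \phi}(P_i) = 2^i \hat{\lambda}^+_{v, \phi}(P)$. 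The symmetric assertion for $\basin^-_v(\phi)$, using $\hat{\lambda}^-_{v, \phi}(\phi^{-1}(Q)) = 2\hat{\lambda}^-_{v, \phi}(Q)$, gives $\log|x_i|_v = 2^{-i}\hat{\lambda}^-_{v, \phi}(P)$ whenever $P_i \in \basin^-_v(\phi)$. Since $|i| \leq M$, each of these is bounded by $2^M \hat{\lambda}_{v, \phi}(P)$.

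For the two cross-coordinate estimates, I would note that if $P_i \in \basin^+_v(\phi) \setminus \basin^-_v(\phi)$, then the failure of the $-$ condition reads $|x_i|_v^2 \leq \max(|y_i|_v, 1) = |y_i|_v$, so $\log^+|x_i|_v \leq \tfrac12 \log|y_i|_v \leq 2^{M-1} \hat{\lambda}^+_{v, \phi}(P)$, and the mirror inequality bounds $\log^+|y_i|_v$ when $P_i \in \basin^-_v(\phi) \setminus \basin^+_v(\phi)$. Finally, when $P_i$ lies in neither basin, the non-archimedean specialization of the first assertion of Lemma~\ref{lem:close to root} gives $\|x_i, y_i\|_v \leq 1$, so $\log^+|x_i|_v = \log^+|y_i|_v = 0$. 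Aggregating the four membership patterns (both basins, only $+$, only $-$, neither), both $\log^+|x_i|_v$ and $\log^+|y_i|_v$ are dominated by $2^M \hat{\lambda}_{v, \phi}(P)$ in every case, which completes the proof.

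The main obstacle is purely bookkeeping: one must pair each coordinate with the basin that controls it via Lemma~\ref{lem:local heights quadratic} and apply Lemma~\ref{lem:close to root} precisely in the residual good-reduction polydisk. No delicate arithmetic arises, and the doubling from $2^M$ to $2^{M+1}$ in the conclusion leaves more than enough slack to absorb the appeal to the dominant coordinate in the two mixed cases.
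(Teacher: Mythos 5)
Your proof is correct and follows essentially the same route as the paper: reduce to pointwise bounds on $\log^+|x_i|_v$ and $\log^+|y_i|_v$ via the ultrametric inequality, then case on basin membership of $P_i$, using the exact local-height formula of Lemma~\ref{lem:local heights quadratic} together with the functional equation when $P_i$ lies in a basin, and the defining inequalities (equivalently Lemma~\ref{lem:close to root}) to bound the complementary coordinate when it does not. The only cosmetic differences are that the paper organizes the cases as ``$P_i\in\basin^-_v$'' versus ``$P_i\notin\basin^-_v$ with $|y|\leq 1$ or $|y|>1$'' rather than the four membership patterns, and that your accounting yields the slightly sharper exponent $2^M$ where the paper is content with $2^{M+1}$.
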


\begin{proof}
If $P_i=(x, y)\in \basin^-_v(\phi)$, then we have
\[\log|x|=\hat{\lambda}_{v, \phi}^-(P_i)\leq  2^{M+1}\hat{\lambda}_{\phi, v}^-(P).\]
If $P_i\not\in\basin^-_v(\phi)$, then we have $|x|^2\leq \max\{1, |y|\}$.  If $|y|\leq 1$, then $|x|\leq 1$, and so we have
\[\log|x|\leq 0\leq 2^{M+1}\hat{\lambda}_{\phi, v}^-(P).\]
If, on the other hand, $|y|>1$, then we have $|y|^2>\max\{1, |x|\}$, and so $P\in \basin^+_v(\phi)$.  In this case,
\[\log|x|\leq 2\log|y|=2\hat{\lambda}_{v, \phi}^+(P_i)\leq 2^{M+1}\hat{\lambda}_{v, \phi}^+(P).\]
In any case,
\[\log|x_i-x_j|\leq \log\max\{|x_i|, |x_j|\}\leq 2^{M+1}\left(\hat{\lambda}_{\phi, v}^-(P)+\hat{\lambda}_{\phi, v}^+(P)\right).\]
The second inequality is similar.
\end{proof}

To begin, we note that by Lemma~\ref{lem:local heights quadratic}, the canonical heights defined by Kawaguchi \cite{kawaguchi} may be written as 
\[\hat{h}_\phi^+(P)=\sum_{v\in M_K}\frac{[K_v:\QQ_v]}{[K:\QQ]}\hat{\lambda}_{v, \phi}^+(P)\]
and
\[\hat{h}_\phi^-(P)=\sum_{v\in M_K}\frac{[K_v:\QQ_v]}{[K:\QQ]}\hat{\lambda}_{v, \phi}^-(P).\]
We define an array of rational numbers as follows.  Let $B_{0, 0}=2$, let $B_{0, n+1}=5B_{0, n}+\frac{5}{2}$, and let $B_{m+1, n}=18B_{m, n}+18$.  Now, fix $s$, suppose that $b\in K$ is $s$-integral, and choose $M\in\ZZ^+$ such that $2M\geq B_{s, s}$.  Fix $P\in \affine^2(K)$, and suppose that $P$ is not periodic of period less than $2M$.  In other words, suppose that the points $P_i$ are distinct, for $i\in [-M, M]$.  Applying Lemma~\ref{lem:two-sided} to each of the (at most) $s$ places of bad reduction, we may choose a subset $I\subseteq [-M, M]$ with $\# I\geq B_{0, s}$ such that for all $i, j\in I$ and all places $v\in M_K$, we have
\begin{equation}\label{eq:two-sided thm}\log|x_i-x_j|+\log|y_i-y_j|+\lambda(b)\leq 2^{M+2}\left(\hat{\lambda}_{\phi, v}^-(P)+\hat{\lambda}_{\phi, v}^+(P)\right)+\alpha_v,\end{equation}
where the relation follows from Lemma~\ref{lem:good reduction} for the places of good reduction.
Suppose that there exist two values $i, j\in I$ such that $x_i\neq x_j$ and $y_i\neq y_j$.  Then summing \eqref{eq:two-sided thm} over all places, with appropriate weights, gives
\[h(b)\leq 2^{M+2}\hat{h}_\phi(P)+C,\]
for some constant $C\leq 23$.  In this case, the inequality in Theorem~\ref{th:lang} follows for all but finitely many $b\in K$.  For the rest, we use the fact that $\hat{h}_\phi$ is discrete, which follows from the results of Kawaguchi \cite{kawaguchi}.

Now consider the case that there do not exist values $i, j\in I$ with $x_i\neq x_j$ and $y_i\neq y_j$.  Then we either have, for all $i, j\in I$ $x_i=x_j$, or else for all $i\in I$ $y_i=y_j$.  In the former case, we may apply Lemma~\ref{lem:constant x} to choose a subset $J\subseteq I$ with $\#J \geq 2$, and 
\begin{equation}\label{eq:constant x thm}\log|y_i-y_j|+\frac{1}{2}\lambda(b)\leq 2^{M+1}\hat{\lambda}_{\phi, v}^+(P)+\beta_v\end{equation}
for all $i, j\in J$ and all $v\in M_K$ (the relation holds for places of good reduction by Lemma~\ref{lem:good reduction}).  Note that, since $x_i=x_j$ for all $i, j\in I$, we have $y_i\neq y_j$.  Choosing $i\neq j\in J$, and summing \eqref{eq:constant x thm} with the appropriate weights, we obtain
\[h(b)\leq 2^{M+2}\hat{h}_\phi^+(P)+C\leq 2^{M+2}\hat{h}_\phi(P)+C,\]
for some constant $C\leq 10$.  Theorem~\ref{th:lang} follows from this in the case that $y_i=y_j$ for all $i, j\in I$, and the case where $y_i=y_j$ for all $i, j\in I$ is similar.  This proves Theorem~\ref{th:lang}.

\section{Variation in families, and  the proof of Theorem~\ref{th:tate}}\label{sec:tate proof}

For this section, we fix a number field or function field $K$, and a smooth, projective curve $C/K$, and let $F=K(C)$. Throughout, we will denote the \emph{local degree} at $v\in M_K$ by $n_v$, where this is 1 if $K$ is a function field, and $n_v=\frac{[K_v:\QQ_v]}{[K:\QQ]}$ if $K$ is a number field. For the benefit of the reader we will recall the germane properties of local height functions, based on the exposition of Lang \cite{lang}.  By an \emph{$M_K$-divisor}, we mean a function $\mathfrak{e}:M_K\to\RR$ such that $\mathfrak{e}(v)=1$ for all but finitely many places $v$.  For any effective divisor $D\in \Div(C)\otimes\QQ$, a set of local heights for $D$ will be a collection of functions $\lambda_{v, D}:C(\overline{K}_v)\to \RR$ such that for any choice of functions $w_\beta\in F$, with $w_\beta$ vanishing only at $\beta$, there exist $M_K$-divisors $\mathfrak{e}_{\beta}$ and $\mathfrak{d}$ such that 
\[\left|\lambda_{v, D}(t)\right|_v\leq \log\mathfrak{d}(v)\]
if $|w_\beta(t)|_v\geq \mathfrak{e}_{\beta}(v)$ for all $\beta\in\Supp(D)$, and
\[\left|\lambda_{v, D}(t)+m_\beta\frac{\log|w_\beta(t)|_v}{\ord_\beta(w_\beta)}\right|\leq \log\mathfrak{d}_v\]
if $|w_\beta(t)|_v<\mathfrak{e}_{v, \beta}$, where $m_\beta$ is the weight of $(\beta)$ in $D$.  For any Galois extension $L/K$, one defines
\[h_D(t)=\frac{1}{\Gal(L/K)}\sum_{\sigma\in\Gal(L/K)}\sum_{v\in M_K}n_v\lambda_{v, D}(t^\sigma),\]
for $t\in C(L)$, and it is easy to check that this gives a well-defined function $h_D:C(\overline{K})\to \RR$.
Although this definition depends on the choice of local heights, it is easy to show that a different choice of local heights changes the function by only a bounded amount.

For convenience, we will identify the points of $C$ over $\overline{K}$ with the set of places of $F$.
Now, if the H\'{e}non map $\phi(x, y)=(ay, x+f(y))$ has coefficients in $F$, we set
\[D_+(\phi, P)=\sum_{\beta\in C(\overline{K})}\hat{\lambda}^+_{\beta, \phi}(P)(\beta)\]
and
\[D_-(\phi, P)=\sum_{\beta\in C(\overline{K})}\hat{\lambda}^-_{\beta, \phi}(P)(\beta).\]
Note that, \emph{a priori}, we have $D_{\pm}(\phi, P)\in \Div(C)\otimes\RR$.  But it follows from Lemma~\ref{lem:local heights} that for each $\beta$, either $\hat{\lambda}_{\beta, \phi}^+(P)=0$ or else there is an $N\geq 1$ such that $\phi^N(P)\in\basin^+_\beta(\phi)$, and so
\[d^N\hat{\lambda}^+_{\beta, \phi}(P)=\hat{\lambda}^+_{\beta, \phi}(\phi^N(P))=\log|y_{\phi^N(P)}|_\beta\in\ZZ.\] It follows that $D_{+}(\phi, P)\in \Div(C)\otimes\QQ$, and similarly for $D_-(\phi, P)$.

We extend the constant field so that $\Supp(D)\subseteq C(K)$.
To each point $\beta\in \Supp(D)$ we associate a function $w_\beta\in K(C)$ which vanishes at $\beta$, and nowhere else, and we define a distance function by
\[\delta_v(\beta, t)=|w_\beta(t)|_v^{1/\ord_\beta(w_\beta)}.\]
Note that this function depends on the choice of $w_\beta$, although a different choice of $w_\beta$ only changes $\delta_v(\beta, \cdot)$ by a non-zero constant multiple as $t$ approaches $\beta$.  We also choose a system of local heights $\lambda_{v, D}$ as above, and note that
\[\left|\lambda_{v, D}(t)+m_\beta\log\delta_v(\beta, t)\right|\leq \log\mathfrak{d}(v)\]
if $\delta_v(\beta, t)<\mathfrak{e}(v)$ for some $\beta$, and $|\lambda_{v, D}(t)|_v\leq \log\mathfrak{d}(v)$ otherwise.

The following lemma is a simple consequence of the arguments in the proof of Lemma~11 of \cite{variation}.
\begin{lemma}\label{lem:orders}
For any function $g_1\in K(C)$ with a pole at $\beta$, and any $M_K$-divisor $\mathfrak{d}$, we can choose an $M_K$-divisor $\mathfrak{e}$ such that
\[|g_1(t)|_v>\mathfrak{e}_v\]
whenever $\delta_v(\beta, t)<\mathfrak{d}_v$.
For any function $g_2\in K(C)$ with neither a pole nor a zero at $\beta$, we can choose an $M_K$-divisor $\mathfrak{e}$ such that
\[\mathfrak{e}_v^{-1}\leq |g_2(t)|_v\leq \mathfrak{e}_v\]
whenever $\delta_v(\beta, t)<\mathfrak{d}_v$.
\end{lemma}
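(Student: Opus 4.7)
The plan is to establish the second assertion first and then reduce the first to it.

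For the second assertion, set $c = g_2(\beta) \in \overline{K}^{*}$; extending the constant field of $F$ if necessary, we may assume $c \in K^{*}$. Since $g_2$ is a unit in the local ring $\mathcal{O}_{C,\beta}$, it extends to a regular, nonvanishing function on some Zariski-open neighborhood $U \ni \beta$. Spreading $C$ and $g_2$ out to a model over the ring of integers of $K$ (or the analogous Dedekind base, in the function field case), this property persists at all but finitely many places $v$: at such a good place, the condition $\delta_v(\beta, t) < 1$ forces $t$ to reduce to $\beta$, whence $|g_2(t)|_v = |c|_v = 1$. At each of the finitely many excluded places, a routine local analysis on the disk $\{t : \delta_v(\beta, t) < \mathfrak{d}_v\}$ --- continuity in the archimedean case, a power-series expansion of $g_2$ in $w_\beta$ otherwise --- produces finite upper and lower bounds on $|g_2(t)|_v$, and one sets $\mathfrak{e}_v$ to the larger of the reciprocal of the lower bound and the upper bound.

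For the first assertion, let $k = -\ord_\beta(g_1) > 0$ and $m = \ord_\beta(w_\beta) > 0$, and set $h = g_1^m w_\beta^k \in K(C)$. Since $\ord_\beta(h) = 0$, the function $h$ has neither a pole nor a zero at $\beta$, and the second assertion applied to $h$ with the given $\mathfrak{d}$ supplies an $M_K$-divisor $\mathfrak{e}'$ satisfying $|h(t)|_v \geq (\mathfrak{e}'_v)^{-1}$ whenever $\delta_v(\beta, t) < \mathfrak{d}_v$. Under the same hypothesis, $|w_\beta(t)|_v = \delta_v(\beta, t)^m < \mathfrak{d}_v^m$, so
\[|g_1(t)|_v^m = |h(t)|_v \cdot |w_\beta(t)|_v^{-k} > (\mathfrak{e}'_v)^{-1}\mathfrak{d}_v^{-mk}.\]
Taking the $m$-th root and setting $\mathfrak{e}_v = (\mathfrak{e}'_v)^{-1/m} \mathfrak{d}_v^{-k}$ yields the required $M_K$-divisor, since both $\mathfrak{e}'_v$ and $\mathfrak{d}_v$ equal $1$ for all but finitely many $v$.

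The subtle point is the uniformity across places in the second assertion. For each fixed $v$ the bounds are elementary, following from $g_2(\beta) \neq 0, \infty$ together with continuity of a rational function at a regular point; the real content of the $M_K$-divisor formulation is that the bound $\mathfrak{e}_v = 1$ suffices at almost every place, which is precisely the spreading-out step outlined above and which mirrors the argument of Lemma~11 in \cite{variation} to which the statement alludes.
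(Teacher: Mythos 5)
The paper provides no proof of this lemma at all: it simply states that the claim ``follows from the arguments in the proof of Lemma~11 of \cite{variation}'' and moves on. Your argument is therefore a reconstruction rather than a comparison, and on its merits it is correct and natural. The spreading-out step at the good places (a model of $C$ and $g_2$ over the ring of integers, so that for almost all $v$ the condition $\delta_v(\beta,t)<1$ forces $t$ to reduce to $\beta$ and hence $|g_2(t)|_v = |g_2(\beta)|_v = 1$), combined with elementary bounds on the finitely many remaining disks, is exactly the content an $M_K$-divisor argument is meant to capture, and the algebraic reduction of the first assertion to the second via $h = g_1^{\ord_\beta(w_\beta)}\, w_\beta^{-\ord_\beta(g_1)}$ is a clean way of extracting the order of the pole. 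Since the paper explicitly extends the constant field so that $\Supp(D) \subseteq C(K)$, your move of extending constants so that $g_2(\beta) \in K^{*}$ is consistent with its conventions. One implicit hypothesis worth flagging, which is really a gap in the lemma's \emph{statement} rather than in your proof: the bounds you produce at the finitely many bad places require the disk $\{\,t : \delta_v(\beta,t) < \mathfrak{d}_v\,\}$ to avoid the other zeros and poles of $g_1$, $g_2$, and $w_\beta$, which fails if the given $\mathfrak{d}_v$ is taken too large. In the paper's applications (Lemma~\ref{lem:tate lemma main} and Lemma~\ref{lem:tate lemma bad case}) $\mathfrak{d}$ is chosen small precisely so that this does not happen, so you may as well make that hypothesis explicit; your estimate $\mathfrak{e}_v = (\mathfrak{e}'_v)^{-1/m}\mathfrak{d}_v^{-k}$ then has the additional virtue of showing explicitly that the lower bound tends to infinity as $\mathfrak{d}_v \to 0$, which is what the application to Lemma~\ref{lem:tate lemma main} actually requires.
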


\begin{lemma}\label{lem:tate lemma main}
Let $\phi$, $P$, and $D_+$ be as above, and suppose that  $\beta\in\Supp(D_+)$ satisfies $P\in\basin_\beta^+(\phi)$.  Then there exist $M_K$-divisors $\mathfrak{e}$ and $\mathfrak{d}$ such that
\[\left|\hat{\lambda}_{v, \phi_t}^+(P_t)-\lambda_{v, D_+}(t)\right|\leq \mathfrak{e}_v\]
for all $t\in C(\overline{K}_v)$ with $0<\delta_v(\beta, t)< \mathfrak{d}_v$.
\end{lemma}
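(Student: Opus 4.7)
The plan is to exploit the basin assumption to get an explicit formula for both $\hat{\lambda}^+_{v,\phi_t}(P_t)$ and $\lambda_{v,D_+}(t)$ in a punctured $v$-adic neighborhood of $\beta$, and then compare them. Let $m_\beta := \hat{\lambda}^+_{\beta,\phi}(P)$ be the coefficient of $\beta$ in $D_+$, so that $m_\beta>0$. Since $P\in\basin^+_\beta(\phi)$ and $\beta$ is non-archimedean on $F$, Lemma~\ref{lem:local heights}(3) gives $m_\beta=\log|y(P)|_\beta$; in particular $y(P)\in F$ has a pole at $\beta$. More precisely, the defining inequality of $\basin^+_\beta(\phi)$ translates to a strict inequality on $\beta$-adic orders: the pole of $y(P)$ at $\beta$ strictly dominates $\frac{1}{d}$ times the pole order of $x(P)$, $\frac{1}{d-1}$ times the pole order of $a$, and each $\frac{1}{d-i}$ times the pole order of $b_i$ for $i<d$.

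The first main step is to show that for $t\in C(\overline{K}_v)$ with $\delta_v(\beta,t)$ below a suitable $M_K$-divisor threshold $\mathfrak{d}$, we have $P_t\in\basin^+_v(\phi_t)$. For this, one applies Lemma~\ref{lem:orders} to each of the finitely many rational functions $y(P)$, $x(P)$, $a$, and the coefficients $b_i$ of $f$. The function $y(P)$, which has a pole at $\beta$, satisfies $|y(P_t)|_v>\mathfrak{e}_v$ with $\mathfrak{e}_v$ as large as we please (by choosing $\mathfrak{d}$ small); the remaining functions either have a smaller pole at $\beta$ (in which case their $v$-adic absolute values are controlled from above by a smaller power of $1/\delta_v(\beta,t)$) or are regular at $\beta$ (in which case Lemma~\ref{lem:orders} bounds them above by an $M_K$-divisor). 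Combining the strict inequality on $\beta$-adic orders with these bounds shows that the $v$-adic basin condition persists for $t$ sufficiently $v$-close to $\beta$.

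Once $P_t\in\basin^+_v(\phi_t)$ is secured, Lemma~\ref{lem:local heights}(3) yields $\hat{\lambda}^+_{v,\phi_t}(P_t)=\log|y(P_t)|_v+\epsilon^+(b,P_t,v)$ with $|\epsilon^+|\leq\frac{\log(2d+3)}{d-1}$ for archimedean $v$ and $\epsilon^+=0$ otherwise, an $M_K$-divisor error. To identify $\log|y(P_t)|_v$ with $-m_\beta\log\delta_v(\beta,t)$ up to an $M_K$-divisor, form the function $G=y(P)^{\ord_\beta(w_\beta)}\cdot w_\beta^{-\ord_\beta(y(P))}\in F$, which has order $0$ at $\beta$. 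Lemma~\ref{lem:orders} then bounds $|G(t)|_v$ from above and below by an $M_K$-divisor on the region $\delta_v(\beta,t)<\mathfrak{d}_v$, and taking logarithms and dividing by $\ord_\beta(w_\beta)$ gives
\[\log|y(P_t)|_v=\frac{\ord_\beta(y(P))}{\ord_\beta(w_\beta)}\log|w_\beta(t)|_v+O(1)=-m_\beta\log\delta_v(\beta,t)+O(1),\]
where in the last step one uses the product-formula normalization of $|\cdot|_\beta$ (giving $\log|y(P)|_\beta=-\ord_\beta(y(P))$) together with the identity $\log\delta_v(\beta,t)=\log|w_\beta(t)|_v/\ord_\beta(w_\beta)$. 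Finally, the defining property of $\lambda_{v,D_+}$ gives $|\lambda_{v,D_+}(t)+m_\beta\log\delta_v(\beta,t)|\leq\log\mathfrak{d}_v$, and the lemma follows by combining the two estimates.

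The main obstacle is the first step: verifying the $v$-adic basin condition for $P_t$ uniformly as $v$ ranges over $M_K$ and $t$ ranges over the $v$-adic neighborhood. Once this is done, everything else is a routine application of Lemma~\ref{lem:orders} together with the closed-form expression of Lemma~\ref{lem:local heights}(3); note in particular that we do not need to iterate $\phi$ in the proof, because the basin assumption lets us read $\hat{\lambda}^+_{v,\phi_t}$ off the first coordinate directly.
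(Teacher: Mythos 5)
Your proposal follows the paper's own proof closely: both fix the basin condition $P_t\in\basin^+_v(\phi_t)$ for $t$ in a small $v$-adic punctured neighbourhood of $\beta$ via Lemma~\ref{lem:orders}, then read $\hat{\lambda}^+_{v,\phi_t}(P_t)$ off Lemma~\ref{lem:local heights}(3), compare $\log|y_t|_v$ to $-m_\beta\log\delta_v(\beta,t)$ using Lemma~\ref{lem:orders} on a function of order zero at $\beta$ (your $G=y^{\ord_\beta(w_\beta)}w_\beta^{-\ord_\beta(y)}$ is exactly the right function), and finish with the defining property of $\lambda_{v,D_+}$.

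The one spot where your write-up is looser than it should be is the basin-persistence step. You propose applying Lemma~\ref{lem:orders} to $y(P)$, $x(P)$, $a$, and the $b_i$ individually. But Lemma~\ref{lem:orders} only gives a lower bound on $|g_1(t)|_v$ when $g_1$ has a pole at $\beta$, and a two-sided bound when $g_2$ has neither pole nor zero there; it does not give you the upper bound on, say, $|x_t|_v$ ``by a smaller power of $1/\delta_v(\beta,t)$'' that your argument invokes when $x$ itself has a pole at $\beta$. The clean fix, and what the paper does, is to apply Lemma~\ref{lem:orders} directly to the \emph{ratio} functions $y^d/x$, $y^{d-i}/b_i$, and $y^{d-1}/a$: each of these has a pole at $\beta$ precisely because $P\in\basin^+_\beta(\phi)$, and the inequalities $|y_t^d/x_t|_v > (d+2)_v^d$, etc., are exactly the defining inequalities of $\basin^+_v(\phi_t)$. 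With that adjustment your argument is complete and identical to the paper's.
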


\begin{proof}
Since $P=(x,y)\in\basin_\beta^+(\phi)$, we have that $y^d/x\in K(C)$ has a pole at $\beta$, and so by Lemma~\ref{lem:orders} we can choose $\mathfrak{d}_0$ small enough that
$|y_t^d/x_t|_v>(d+2)_v^d$
whenever $\delta_v(\beta, t)<\mathfrak{d}_0(v)$.  Proceeding similarly for the functions $y^{d-i}/b_i$ and $y^{d-1}/a$, and noting that a pointwise minimum of $M_K$-divisors is again an $M_K$-divisor, we can construct a $\mathfrak{d}_1$ such that $P_t\in\basin^+_v(\phi_t)$ for all $t\in C(\overline{K}_v)$ satisfying $0<\delta_v(\beta, t)<\mathfrak{d}_1(v)$.  Now, for these $t$, we have 
\[\hat{\lambda}^+_{v, \phi_t}(P_t)=\log|y_t|_v+\epsilon_v(t),\]
where $\epsilon_v(t)=0$ for $v\in M_K$ non-archimedean, and $\epsilon_v(t)$ bounded in terms of $d$ otherwise, by Lemma~\ref{lem:local heights}.  On the other hand, since 
\[\hat{\lambda}^+_{\beta, \phi}(P)=\log|y|_\beta=-\ord_\beta(y),\] it follows that 
$-\ord_\beta(y)$ is the weight to which the prime divisor $(\beta)$ occurs in $D_+$.  In other words, by the defining properties of local heights, there exist $M_K$-divisors $\mathfrak{e}_0$ and $\mathfrak{d}_2$ such that
\[\left|\lambda_{v, D_+}(t)-\ord_\beta(y)\log\delta_v(\beta, t)\right|\leq \log\mathfrak{e}_0(v)\]
whenever $0<\delta_v(\beta, t)<\mathfrak{d}_2(v)$.
Finally, applying Lemma~\ref{lem:orders} to the function $g_2=y^{-\ord_{\beta(w_\beta)}}/w_\beta^{\ord_\beta(y)}$, 
we obtain $M_K$-divisors $\mathfrak{e}_1$ and $\mathfrak{d}_3$ such that
\[\big|\ord_\beta(w_\beta)\log |y_t|_v+\ord_\beta(y)\log|w_\beta(t)|_v\big|\leq \log\mathfrak{e}_1(v)\]
whenever $0<\delta_v(\beta, t)<\mathfrak{d}_3(v)$.

Combining these three estimates, we obtain (for $r_d$ determined by Lemma~\ref{lem:local heights})
\begin{equation}
\left|\hat{\lambda}_{v, \phi}(P_t)-\lambda_{v, D_+}(t)\right|\leq \log \left(\mathfrak{e}_0(v)\mathfrak{e}_1(v)\left(r_d\right)_v\right)
\end{equation}
so long as \[0<\delta_v(\beta, t)<\min\{\mathfrak{d}_1(v), \mathfrak{d}_2(v), \mathfrak{d}_3(v)\}.\]  Since pointwise minima and products of $M_K$-divisors are again $M_K$-divisors, this proves the lemma.
\end{proof}

\begin{lemma}\label{lem:tate lemma bad case}
Let $\phi$, $P$, and $D^+$ be as above, and suppose that $\beta\not\in\Supp(D^+)$ is a point at which some $b_i$, or $a$, or $x$ or $y$ has a pole, but such that $\hat{\lambda}_{\beta, \phi}(P)=0$.  Then there exist $M_K$-divisors $\mathfrak{e}$ and $\mathfrak{d}$ such that
\[\max\left\{\hat{\lambda}_{v, \phi_t}^+(P_t),\lambda_{v, D}(t)\right\}\leq\mathfrak{e}_v\]
for all $t\in C(\overline{K_v})$ with $\delta_v(\beta, t)< \mathfrak{d}_v$ and $t\neq \beta$.
\end{lemma}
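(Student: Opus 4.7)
The proof splits into the two bounds. The bound on $\lambda_{v, D_+}(t)$ is immediate: since $\beta \notin \Supp(D_+)$, the defining properties of local height functions set out at the start of this section provide $M_K$-divisors $\mathfrak{d}_0$ and $\mathfrak{e}_0$ such that $|\lambda_{v, D_+}(t)| \leq \log\mathfrak{e}_0(v)$ whenever $\delta_v(\beta, t) < \mathfrak{d}_0(v)$. This is harmless and contributes to the final $\mathfrak{e}$ and $\mathfrak{d}$.

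For the bound on $\hat{\lambda}^+_{v, \phi_t}(P_t)$, the hypothesis $\hat{\lambda}_{\beta, \phi}(P) = 0$ forces in particular $\hat{\lambda}^+_{\beta, \phi}(P) = 0$ since both summands are non-negative, and part~(4) of Lemma~\ref{lem:local heights} then says that no iterate $\phi^N(P)$ lies in $\basin^+_\beta(\phi)$. Tracking the explicit estimate derived in the proof of that lemma at the place $\beta$ yields a uniform bound over $N$ on $|y_{\phi^N(P)}|_\beta$ (and on $|x_{\phi^N(P)}|_\beta$), depending only on $|a|_\beta$ and the $|b_i|_\beta$; equivalently, the orders of pole $v_\beta(y_{\phi^N(P)})$ and $v_\beta(x_{\phi^N(P)})$ are bounded below by a constant $L$ independent of $N$. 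My plan is to transfer this $\beta$-adic boundedness to a $v$-adic bound on the iterates $\phi^N_t(P_t)$: I would argue that for $t$ close enough to $\beta$, the non-escape condition persists under specialization, so that $\phi^N_t(P_t) \notin \basin^+_v(\phi_t)$, whereupon the same estimate from Lemma~\ref{lem:local heights} controls $\|\phi^N_t(P_t)\|_v$ uniformly, and the limit defining $\hat{\lambda}^+_{v, \phi_t}(P_t)$ is then bounded. The mechanism for this transfer is Lemma~\ref{lem:orders}, applied to each of the rational functions $y^d/x$, $y^{d-i}/b_i$, and $y^{d-1}/a$ that appear in the definition of $\basin^+$, so that the $\beta$-adic non-escape inequality becomes a $v$-adic inequality for $t$ in a suitable punctured neighborhood of $\beta$.

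The main obstacle is achieving uniformity in $N$: Lemma~\ref{lem:orders} naively provides an $M_K$-divisor depending on the particular function to which it is applied, and the coordinates of $\phi^N(P)$ change with $N$. I would circumvent this by exploiting the $d^{-N}$ decay inherent in $\hat{\lambda}^+_{v, \phi_t}(P_t) = \lim_N d^{-N}\log^+\|\phi^N_t(P_t)\|_v$, reducing the task to bounding $\|\phi^{N_0}_t(P_t)\|_v$ uniformly for a single $N_0$ depending on $\phi$ and $P$; the tail beyond $N_0$ contributes a bounded amount, controlled by Lemma~\ref{lem:orders} applied just to the finitely many functions $a, b_0, \ldots, b_{d-1}$ and to the coordinates of $\phi^{N_0}(P)$. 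Combining this single $M_K$-divisor with $\mathfrak{e}_0, \mathfrak{d}_0$ from the first paragraph produces the required $\mathfrak{e}$ and $\mathfrak{d}$.
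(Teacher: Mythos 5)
Your observation that $\hat\lambda^+_{\beta,\phi}(P)=0$ forces $\phi^N(P)\notin\basin^+_\beta(\phi)$ for all $N$, and hence (from the explicit estimate in the proof of Lemma~\ref{lem:local heights}) that the pole orders of $x_N$ and $y_N$ at $\beta$ are bounded uniformly in $N$, is correct and is implicitly behind the paper's proof as well. The bound on $\lambda_{v,D_+}(t)$ is also correctly dismissed as trivial. However, the transfer step has a genuine gap, in two places. First, the claim that ``the non-escape condition persists under specialization'' is not justified: the basin-membership inequality at $\beta$ may be an equality of pole orders (e.g.\ $y_N^d/x_N$ a $\beta$-unit), and for non-archimedean $v$ the inequality defining $\basin_v^+(\phi_t)$ has no slack, so nearby specializations can and will escape. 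Even where $\phi^N_t(P_t)\notin\basin^+_v(\phi_t)$ holds, the resulting estimate from Lemma~\ref{lem:local heights} bounds $\|\phi^N_t(P_t)\|_v$ in terms of $|a_t|_v$ and $|b_{i,t}|_v$, which blow up as $t\to\beta$ (since $a$, $b_i$ have poles at $\beta$), so it is \emph{not} a uniform bound. Second, the proposed reduction to ``bounding $\|\phi^{N_0}_t(P_t)\|_v$ uniformly for a single $N_0$'' cannot be carried out: $\phi^{N_0}(P)$ itself generically has a pole at $\beta$, so $\|\phi^{N_0}_t(P_t)\|_v$ is unbounded as $t\to\beta$, and the telescoping tail is controlled by $\log^+|a_t|_v, \log^+|b_{i,t}|_v$ which likewise blow up. Carefully tracking these, one gets an estimate of the shape $d^{-N_0}\,C\log\delta_v(\beta,t)^{-1}$ for a fixed $N_0$, which is not bounded independently of $t$.

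The missing ingredient is the maximum modulus principle (archimedean, plus its non-archimedean analogue for the finitely many residual places): one compares $|y_{N,t}|_v$, for $t$ inside a disk of fixed radius $\epsilon$ about $\beta$, to the quantity $[y_N]_\epsilon$, the sup on the boundary circle, which never approaches $\beta$. This separates the bound into (i) a ``polar'' contribution $d^{-N}\,\ord_\beta(y_N)\log(\epsilon/\delta)$, whose limit vanishes precisely because $\hat\lambda^+_{\beta,\phi}(P)=0$, and (ii) $d^{-N}\log^+\max\{[x_N]_\epsilon,[y_N]_\epsilon\}$, whose limit exists and is finite by a telescoping argument using that $[a]_\epsilon$ and $[b_i]_\epsilon$ are finite constants (the circle stays away from $\beta$). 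Since the resulting bound does not depend on $\delta$, it in fact holds for all $0<\delta_v(\beta,t)<\epsilon$. For the (infinitely many) non-archimedean $v$ where the Laurent expansions of $a, b_i, x, y$ at $\beta$ are $v$-integral, the paper's more direct argument (in effect: your bounded-pole-order observation, combined with integrality, gives $\log^+|y_{N,t}|_v\le L\cdot\log\delta_v(\beta,t)^{-1}$, killed by the $d^{-N}$ factor) works and yields $\hat\lambda^+_{v,\phi_t}(P_t)=0$; but the remaining finitely many places genuinely require the maximum modulus step, which your sketch omits.
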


\begin{proof}
It follows from the basic facts about local height functions that $\lambda_{v, D}(t)$ is bounded near such a point, so it suffices to show that $\hat{\lambda}^+_{v, \phi_t}(P_t)$ is, too.  Let $\phi^N(P)=(x_N, y_N)$.  We proceed much as in the proof of Lemma~13 of \cite{variation}.

In particular, if $v$ is a non-archimedean place at which $x, y, a, b_{d-1}, ..., b_0$ are given by Laurent series in a uniformizer $u$, with $v$-adic integral coefficients, then $x_N$ and $y_N$ are given by such series as well.  We have, in this case
\[\left|u_t^{\ord_\beta(y_N)}y_{N, t}\right|_v\leq 1,\]
for all $t$ with $\delta_v(\beta, t)<1$.  If we restrict attention, for the moment, to $t$ satisfying $\delta_v(\beta, t)\geq \delta>0$, then we have
\[\log|y_{N, t}|_v\leq -\ord_\beta(y_N)\log|u_t|_v\leq -\ord_\beta(y_N)\log\delta^{-1}.\]
 Applying the same argument to the $x_N$, we see that
\begin{eqnarray*}
\hat{\lambda}_{v, \phi_t}(P_t)&=&\lim_{N\to\infty}d^{-N}\log^+\|x_{N, t}, y_{N, t}\|_v\\
&\leq& \lim_{N\to\infty}d^{-N}\left(\log^+\|x_N, y_N\|_\beta\cdot\log\delta^{-1}\right)\\
&=&\hat{\lambda}_{\beta, \phi}(P)\log\delta^{-1}\\
&=&0
\end{eqnarray*}
for all $t$ with $\delta<\delta_v(\beta, t)<1$.  But as $\delta$ was arbitrary, this must hold for all $t$ with $\delta_v(\beta, t)<1$, and $\beta\neq t$.

Now, suppose that $v$ is an archimedean place, and let $\epsilon>0$ be chosen such that the Laurent series defining $x, y, a, b_{d-1}, ..., b_0$ converge on the set of $t$ with $\delta_v(\beta, t)\leq \epsilon$.  For any analytic function $g$ on this set, let $\lceil g\rceil_\epsilon$ denote the maximal value of $g(t)$ on the disk $\delta_v(\beta, t)\leq \epsilon$.  Similarly, if $g$ is analytic on the set of $t$ with $\delta_v(\beta, t)=\epsilon$, we let $[g]_\epsilon$ denote the maximum modulus of $g$ on this set.   Then, by the maximum modulus principle,
\[\left|u_t^{\ord_\beta(y_N)}y_{N, t}\right|_v\leq \left\lceil u^{\ord_\beta(y_N)}y_{N}\right\rceil_\epsilon=\left[u^{\ord_\beta(y_N)}y_{N}\right]_\epsilon=\epsilon^{\ord_\beta(y_N)}\left[ y_{N}\right]_\epsilon,\]
for $t$ satisfying $0<\delta_v(\beta, t)<\epsilon$, and hence
\[\left|y_{N, t}\right|_v\leq \left(\frac{\epsilon}{\delta}\right)^{\ord_\beta(y_N)}\left[ y_{N}\right]_\epsilon\]
for $t$ satisfying $\delta<\delta_v(\beta, t)<\epsilon$.  Arguing in the same fashion for $x_{N, t}$, we have
\begin{eqnarray*}
\hat{\lambda}_{v, \phi_t}(P_t)&=&\lim_{N\to\infty}d^{-N}\log^+\|x_{N, t}, y_{N, t}\|_v\\
&\leq &\lim_{N\to\infty}d^{-N}\log^+\max\left\{[x_N]_\epsilon, [ y_N]_\epsilon\right\}\\ && +\lim_{N\to\infty}d^{-N}\left(\log^+\|x_N, y_N\|_\beta\cdot\log\frac{\epsilon}{\delta}\right)\\
&=&\lim_{N\to\infty}d^{-N}\log^+\max\left\{[x_N]_\epsilon, [y_N]_\epsilon\right\},
\end{eqnarray*}
since $\hat{\lambda}_{\beta, \phi}(P)=0$.  But we note that, if $c$ is chosen so that
\[\max\{[ a]_\epsilon, \max_{i}\{[ b_i]_\epsilon\}, 1\}\leq c\]
for all $i$, then
\begin{eqnarray*}
\log[ y_{N+1}]_\epsilon&\leq& \log^+\max\{[ ax_N]_\epsilon,\max_i\{[ b_iy_N^i]_\epsilon\}\}+\log (d+2)\\
 &\leq& d\log^+\max\{[x_N]_\epsilon, [y_N]_\epsilon\}+\log(d+2)c.
 \end{eqnarray*}
 We also have
 \[\log[ x_{N+1}]_\epsilon = \log[ x_{N}]_\epsilon+\log[ a]_\epsilon\leq d\log^+\max\{[ x_N]_\epsilon, [ y_N]_\epsilon\}+\log(d+2)c,\]
 and so
 \[\log^+\max\left\{[x_{N+1}]_\epsilon, [ y_{N+1}]_\epsilon\right\}\leq d\log^+\max\{[x_N]_\epsilon, [y_N]_\epsilon\}+\log(d+2)c.\]
 It now follows from the standard telescoping sum arguments that the limit
 \[\lim_{N\to\infty}d^{-N}\log^+\max\left\{[x_N]_\epsilon, [y_N]_\epsilon\right\}\] appearing above exists and is finite, bounding $\hat{\lambda}_{v, \phi_t}(P_t)$ for $\delta<\delta_v(\beta, t)<\epsilon$.  But the bound does not depend on $\delta$, and so we have a bound on $\hat{\lambda}_{v, \phi_t}(P_t)$ for $0<\delta_v(\beta, t)<\epsilon$, as desired.  The remaining finitely many non-archimedean places are treated similarly, using the non-archimedean maximum principle \cite[p.~318]{robert}
\end{proof}

\begin{lemma}\label{lem:tate lemma bounded part}
Let $\phi$, $P$, and $D^+$ be as above.  Then for any $M_K$-divisor $\mathfrak{d}$ there is an $M_K$-divisor $\mathfrak{f}$ such that for all $t\in C(\overline{K})$ with
\[\delta_v(\beta, t)\geq \mathfrak{d}_v\]
for each $\beta\in Z$, we have
\[\max\left\{\hat{\lambda}^+_{v, \phi_t}(P_t), \lambda_{v, D}(t)\right\}\leq \mathfrak{f}_v.\]
\end{lemma}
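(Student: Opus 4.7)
The plan is to handle the two terms separately, exploiting that the set $Z$ (which, in light of Lemmas~\ref{lem:tate lemma main} and \ref{lem:tate lemma bad case}, one should take to consist of $\Supp(D_+)$ together with all points of $C$ at which one of $a, b_0, \ldots, b_{d-1}, x, y$ has a pole) is finite, and that outside an $M_K$-neighborhood of $Z$ everything relevant to bounding $\hat{\lambda}^+_{v, \phi_t}(P_t)$ and $\lambda_{v, D_+}(t)$ remains under control.

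For $\lambda_{v, D_+}(t)$, the bound is immediate from the defining property of local heights recalled at the start of the section: outside a prescribed $M_K$-neighborhood of $\Supp(D_+)$, the local height $\lambda_{v, D_+}$ is bounded by $\log\mathfrak{d}^*_v$ for some $M_K$-divisor $\mathfrak{d}^*$. Shrinking $\mathfrak{d}$ if necessary, the hypothesis $\delta_v(\beta,t)\geq \mathfrak{d}_v$ for all $\beta\in\Supp(D_+)$ forces $|w_\beta(t)|_v\geq \mathfrak{e}_\beta(v)$, which is exactly the regime in which the local height is bounded.

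For $\hat{\lambda}^+_{v, \phi_t}(P_t)$, I would apply the second statement of Lemma~\ref{lem:orders} separately to each of the functions $a, b_0, \ldots, b_{d-1}, x, y$, none of which has a pole outside $Z$. Since a pointwise maximum of finitely many $M_K$-divisors is again an $M_K$-divisor, this produces an $M_K$-divisor $\mathfrak{c}$ such that
\[\max\{|a(t)|_v,\ |b_i(t)|_v,\ |x(t)|_v,\ |y(t)|_v\}\leq \mathfrak{c}(v)\]
whenever $\delta_v(\beta,t)\geq \mathfrak{d}_v$ for every $\beta\in Z$. A standard telescoping estimate (the same one implicit in the proof of Lemma~\ref{lem:local heights}) then bounds $\log^+\|\phi_t^N(P_t)\|_v$ inductively in terms of $\log^+\|P_t\|_v$ and a convergent geometric series in the coefficient sizes; dividing by $d^N$ and passing to the limit yields an upper bound of the form $\hat{\lambda}^+_{v, \phi_t}(P_t)\leq \log\mathfrak{f}'(v)$ for some $M_K$-divisor $\mathfrak{f}'$ depending only on $\mathfrak{c}$ and $d$.

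Taking the pointwise maximum of $\mathfrak{d}^*$ and $\mathfrak{f}'$ gives the required $\mathfrak{f}$. The only real technical point is the bookkeeping observation that $M_K$-divisors are closed under pointwise maxima, products, and finite intersections of the $\delta_v(\beta,\cdot)\geq \mathfrak{d}_v$ conditions, which allows the individual bounds produced for each function in the list $a, b_0, \ldots, b_{d-1}, x, y$ and for $\lambda_{v, D_+}$ to be merged into a single $M_K$-divisor — nothing more delicate arises than in the preceding two lemmas.
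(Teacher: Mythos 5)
Your overall strategy — bound the coordinate functions $a, b_i, x, y$ away from $Z$, run the telescoping estimate to control $\hat{\lambda}^+_{v,\phi_t}(P_t)$, and read off the bound on $\lambda_{v, D_+}(t)$ from the defining property of a system of local heights — is exactly the right one, and it is what the paper does. However, there is a genuine gap in the step where you invoke the second statement of Lemma~\ref{lem:orders} to produce the $M_K$-divisor $\mathfrak{c}$ bounding $|a(t)|_v, |b_i(t)|_v, |x(t)|_v, |y(t)|_v$.

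The second statement of Lemma~\ref{lem:orders} concerns a single fixed point $\beta$ at which $g_2$ has neither a pole nor a zero, and it controls $|g_2(t)|_v$ for $t$ satisfying $\delta_v(\beta,t)<\mathfrak{d}_v$, i.e.\ for $t$ \emph{near} $\beta$. What you need here is the opposite regime: a bound on $|g(t)|_v$ when $t$ is $v$-adically \emph{bounded away} from every point of $Z$, where $Z$ contains all the poles of $g$. There is no single $\beta$ to which Lemma~\ref{lem:orders} applies, and you cannot cover the locus $\{\,t : \delta_v(\beta,t)\geq\mathfrak{d}_v\ \text{for all }\beta\in Z\,\}$ by finitely many disks of the type Lemma~\ref{lem:orders} handles, uniformly over all $v\in M_K$. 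Taking pointwise maxima of $M_K$-divisors, as you propose, merges bounds that hold on neighborhoods of the various $\beta$'s — it does not produce a bound on the complement. The statement you actually need is a global one: any $g\in K(C)$ is bounded in the $M_K$-divisor sense on the set of $t$ lying $v$-adically far from the poles of $g$. This is precisely Lemma~8 of \cite{variation}, which is what the paper cites for this step; it is a distinct result from the one recorded here as Lemma~\ref{lem:orders}. With that citation substituted, your argument goes through; the rest of your proposal (the telescoping bound, the trivial bound on $\lambda_{v,D_+}$, and the closure of $M_K$-divisors under maxima and products) is correct.
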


\begin{proof}
This follows from Lemma~8 of \cite{variation}.  In particular, there is an $M_K$-divisor $\mathfrak{m}$ such that if $\delta_v(\beta, t)\geq \mathfrak{e}_{\beta, v}$, for each $\beta\in Z$, then
\[|x_t|_v, |y_t|_v, |a_t|_v, |b_{i, t}|_v\leq \mathfrak{m}_v.\]
Using the standard telescoping sum argument, as in Lemma~\ref{lem:local heights} or Lemma~\ref{lem:tate lemma bad case}, this gives a bound on $\hat{\lambda}_{v, \phi_t}(P_t)$ which depends only on $v$.  Moreover, this bound is 0 at any non-archimedean place for which $\mathfrak{m}_v=1$.
\end{proof}

We are now in a position to complete the proof of Theorem~\ref{th:tate}.
\begin{proof}[Proof of Theorem~\ref{th:tate}]
We focus on the relation for $\hat{h}^+_\phi$, first.  By the three lemmas above, if we have $P\in\basin^+_\beta(\phi)\cup\julia_{\beta, \phi}$ for every place $\beta\in C$, then
\begin{eqnarray*}
\left|\hat{h}_{\phi_t}^+(P_t)-h_{D^+}(t)\right|&\leq&\sum_{v\in M_K}n_v\left|\hat{\lambda}_{v, \phi_t}^+(P_t)-\lambda_{v, D}(t)\right|\\
&\leq & \sum_{v\in M_K}n_v\mathfrak{e}_v,
\end{eqnarray*}
a constant, for $t\in C(K)$.  For $t\in C(\overline{K})$, we may use a similar estimate on
\[\hat{h}_{\phi_t}^+(P_t)-h_{D^+}(t)=\frac{1}{\Gal(L/K)}\sum_{\sigma\in\Gal(L/K)}\sum_{v\in M_K}n_v\left(\hat{\lambda}_{v, \phi_{t^\sigma}}^+(P_{t^\sigma})-\lambda_{v, D}(t^\sigma)\right),\]
for any Galois extension $L/K$.

But note that for any $P\in \affine^2(K)$, there exists an $N$ such that $\phi^N(P)$ has the property mentioned above, so we have
\[\hat{h}_{\phi_t}^+(\phi^N(P_t))=h_{D^+(\phi, \phi^N(P))}(t)+O(1),\]
from which the result follows by the linearity of heights, and the relations $\hat{h}_{\phi_t}^+(\phi^N(P_t))=d^N\hat{h}_{\phi_t}^+(P_t)$ and $D^+(\phi, \phi^N(P))=d^ND^+(\phi, P)$.

The symmetric claim for $\hat{h}^-_{\phi_t}(P_t)$ can be proven in an essentially identical manner, after first producing analogous version of Lemmas~\ref{lem:tate lemma main}, \ref{lem:tate lemma bad case}, and~\ref{lem:tate lemma bounded part}.
\end{proof}

\section{Proof of Theorems~\ref{th:silverman} and \ref{th:unlikely}}\label{sec:applications}

It is now a relatively simple matter to prove Theorems~\ref{th:silverman} and \ref{th:unlikely}.
\begin{proof}[Proof of Theorem~\ref{th:silverman}]
Suppose that $\phi(x, y)=(y, x+f(y))$, with $f(y)\in F[z]$, and suppose that $P\in \affine^2(F)$ is not periodic for $\phi$.  Assuming that $\phi$ is not isotrivial, Theorem~\ref{th:benedetto} tells us that $\hat{h}_{\phi}(P)>0$.  In particular, the divisor $D=D_+(\phi, P)+D_-(\phi, P)$, where $D_\pm(\phi, P)$ are the divisors described in Theorem~\ref{th:tate}, is effective and ample.  But by Theorem~\ref{th:tate}, we have
\[\hat{h}_{\phi_t}(P_t)=h_D(t)+O(1),\]
and so the set of $t$ for which $\hat{h}_{\phi_t}(P_t)=0$ is a set of bounded height relative to $D$ (and hence relative to any ample class on $C$).
\end{proof}

\begin{proof}[Proof of Theorem~\ref{th:unlikely}]
Let $X\subseteq C(K)$ be the set of parameters $t$ which are $s$-integral with respect to $\eta$, and such that $\Ocal_{\phi_t}(P_t)=\Ocal_{\phi_t}(Q_t)$, and suppose that $X$ is infinite.  Without loss of generality, we will suppose that there are infinitely many parameters $t$ such that there exists an $m\geq 0$ with $\phi^m_t(P_t)=Q_t$. Note that for each given $m\geq 0$, there can be only finitely many parameters $t\in C(K)$ such that $\phi^m_t(P_t)=Q_t$, unless we have $\phi^m(P)=Q$ on the generic fibre, since the condition $\phi^m_t(P_t)=Q_t$ is described by the vanishing of non-zero functions on $C$.  So there must be arbitrarily large values $m\geq 0$ such that there exists a $t\in X$ with $\phi^m_t(P_t)=Q_t$.  Now write $m=m_1+m_2$, if $\phi^m_t(P_t)=Q_t$, and let $R=\phi^{m_1}_t(P_t)$.  We have
\[\hat{h}_{\phi_t}^+(R)=d^{-m_2}\hat{h}_{\phi_t}^+(Q_t)=d^{-m_2}h_{D^+(\phi, Q)}(t)+O(1)\]
and
\[\hat{h}_{\phi_t}^-(R)=d^{-m_1}\hat{h}_{\phi_t}^-(P_t)=d^{-m_1}h_{D^-(\phi, P)}(t)+O(1).\]
It follows that, for any degree 1 height $h$ on $C$, we have
\[\hat{h}_{\phi_t}(R)\leq d^{-\min\{m_1, m_2\}}\left(\hat{h}^+_{\phi}(Q)+\hat{h}_{\phi}^-(P)\right)h(t)+O\left(h(t)^{1/2}\right)\]
and so, in particular, for any $\delta>0$ we can find infinitely many $t\in X$ such that
\[\hat{h}_{\phi_t}(R)\leq \delta h(t)+O\left(h(t)^{1/2}\right).\]
Note that if $R$ is periodic for $\phi_t$, then so are $P_t$ and $Q_t$, and so by Theorem~\ref{th:silverman}, this happens for only finitely many $X$.  Discounting those, we have for any $\delta>0$ an infinite set of parameters $t\in C(K)$, all $s$-integral with respect to $\eta$, such that
\begin{equation}\label{eq:height upper unlikely}0<\hat{h}_{\phi_t}(R)\leq \delta h(t)+O\left(h(t)^{1/2}\right).\end{equation}
But note that there is a finite set $S$ of primes such that $t$ is $s$-integral with respect to $\eta$ if and only if $b(t)$ is $s+\# S$-integral.  In particular, the existence of infinitely many $t\in C(K)$, $s$-integral with respect to $\eta$, satisfying \eqref{eq:height upper unlikely}, contradicts Theorem~\ref{th:lang} once $\delta$ is small enough.
\end{proof}

\section{Computations and examples}\label{sec:computations}

We close with some computational work around the family
\[\phi(x, y)=(y, x+y^2+b)\]
over $\QQ$, presenting a means of verifying
 Conjecture~\ref{conj} for a specific value of $b$.  Although we focus on $\QQ$, the algorithm is easily modified to work over any number field.  The proof of Proposition~\ref{prop:comp} is essentially a repeated application of this algorithm.
 We note that the computations here are in spirit the same as those in \cite{i-hutz}, although there are some slight differences in the details.  
 
 Our first lemma gives a method for computing a list of possible periods of $\QQ$-rational periodic points for $\phi$, based on the dynamics modulo a prime of good reduction.  Note that this lemma follows from essentially the same argument as a result of Pezda \cite{pezda}, although we present a proof here both for completeness, and because the aforementioned results of Pezda are more general, and do not have conclusions quite as precise as we need for these computations.  Before we state the result we note that it follows from Lemma~\ref{lem:local heights quadratic} that if $b\in \ZZ_p$, and $Q\in \affine^2(\QQ_p)$ is periodic for $\phi(x, y)=(y, x+y^2+b)$, then the coordinates of $Q$ must be $p$-adic integers.  In particular, it makes sense to speak of the image $\widetilde{Q}\in\affine^2(\FF_p)$ of $Q$ modulo $p$.

In general, if $\psi=(F, G):\affine^2\to\affine^2$ is a polynomial map, we let
\[J_{\psi}(x, y)=\left(\begin{matrix}\frac{\partial F}{\partial x} & \frac{\partial F}{\partial y} \\ \frac{\partial G}{\partial x} & \frac{\partial G}{\partial y}\end{matrix}\right)\]
denote the Jacobian of $\psi$ at $(x, y)$.  We then define the \emph{multiplier} of $\psi$ at the $N$-periodic point $Q$  by
\[\Lambda_{N, \psi}(Q)=\prod_{i=0}^{N-1}J_{\psi}(\psi^{N-1-i}(Q)).\]
Note that this is not a well-defined function of the cycle, as in the case of rational maps of $\mathbb{P}^1$, but is a well-defined conjugacy class.  We may speak unambiguously, then of the order of the matrix $\Lambda_{N, \psi}$ for a given cycle.

\begin{lemma}\label{lem:local to global}
Let $p\geq 5$ be prime, let $\phi(x, y)=(y, x+y^2+b)$, where $b\in \ZZ_p$, let $Q\in\affine^2(\QQ_p)$ have period $N$ for $\phi$, and suppose that the image modulo $p$, $\widetilde{Q}\in \affine^2(\FF_p)$,  has period $M$ for $\widetilde{\phi}$.  Then $N=dM$ for some divisor $d\geq 1$ of the order of $\Lambda_{\widetilde{\phi}}(\widetilde{Q})\in\SL_2(\FF_p)$.
\end{lemma}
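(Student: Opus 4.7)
The plan is to follow Pezda's argument \cite{pezda} in three steps, reducing eventually to a local Taylor analysis in a residue disk.

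First, $M\mid N$: since the reduction map $\affine^2(\ZZ_p)\to\affine^2(\FF_p)$ intertwines $\phi$ with $\widetilde\phi$, the equation $\phi^N(Q)=Q$ forces $\widetilde\phi^N(\widetilde Q)=\widetilde Q$. Writing $N=dM$, it remains to show $d\mid n$, where $n$ is the order of $\widetilde\Lambda:=\Lambda_{\widetilde\phi}(\widetilde Q)$.

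Next, I would reduce to the identity-multiplier fixed-point case. Replace $\phi$ by $\psi:=\phi^M$, so that $\widetilde Q$ is a fixed point of $\widetilde\psi$ with Jacobian $\widetilde\Lambda$, and $Q$ has period exactly $d$ under $\psi$. Then $\chi:=\psi^n=\phi^{Mn}$ satisfies $\widetilde\chi(\widetilde Q)=\widetilde Q$ and $J_{\widetilde\chi}(\widetilde Q)=\widetilde\Lambda^n=I$, while $Q$ has period $d/\gcd(d,n)$ under $\chi$. Hence it suffices to show that $Q$ is a fixed point of $\chi$, i.e., that $\chi$ admits no non-trivial periodic point in the residue disk above $\widetilde Q$.

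The core of the argument is the local analysis of such a $\chi$. Lift $\widetilde Q$ to $Q_0\in\ZZ_p^2$ and translate so that $Q_0=0$; then one may write $\chi(u)=\beta+(I+pB)u+H(u)$ with $\beta\in p\ZZ_p^2$, $B\in M_2(\ZZ_p)$, and $H$ a polynomial of total degree $\geq 2$ with $p$-integral coefficients. For $u\in p\ZZ_p^2$ all iterates $\chi^i(u)$ remain in $p\ZZ_p^2$, so both $pB\chi^i(u)$ and $H(\chi^i(u))$ lie in $p^2\ZZ_p^2$. Telescoping,
\[
\chi^k(u)-u=\sum_{i=0}^{k-1}\bigl(\beta+pB\chi^i(u)+H(\chi^i(u))\bigr),
\]
so the equation $\chi^k(u)=u$ reduced modulo $p^2$ yields $k\beta\equiv 0\pmod{p^2}$. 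If $\beta\notin p^2\ZZ_p^2$ this forces $p\mid k$; otherwise one bootstraps to deeper powers of $p$ by expanding the Taylor data further.

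The main obstacle is to rule out the case $p\mid k$, which in the one-dimensional Pezda theorem genuinely occurs and contributes extra factors of $p^e$ to the period. Here I would exploit the H\'enon-specific area-preservation $\det J_\phi=-1$ (so that $\chi$ is a volume-preserving polynomial automorphism) together with the hypothesis $p\geq 5$, iterating the bootstrap refinement of the Taylor expansion until either $\chi$ becomes the identity on the whole residue disk — impossible, since $\chi$ is a non-constant polynomial — or $u$ is forced to satisfy $\chi(u)=u$, giving $d\mid n$ and completing the proof.
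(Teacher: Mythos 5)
Your setup closely mirrors the paper's: the reductions ($M\mid N$; replacing $\phi$ by $\phi^M$ and then by $\phi^{Mn}$ to arrive at a map $\chi$ whose reduction fixes $\widetilde{Q}$ with identity Jacobian; translating to the origin; writing a $p$-adic Taylor expansion and telescoping) are all present in the paper's proof. The first application of the telescoping sum, which forces $p\mid k$ when the displacement $\beta$ has exact $p$-adic valuation $1$, also matches the paper's first-order estimate.

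The genuine gap is that you never actually handle the case $p\mid k$. Having reduced to showing that a prime period must equal $1$, the only surviving possibility is period exactly $p$, and this is precisely where the real work lies. Your final paragraph gestures at ``exploiting $\det J_\phi=-1$'' and ``iterating the bootstrap until $\chi$ becomes the identity on the whole residue disk --- impossible,'' but this dichotomy is not correct: a bootstrap of Taylor data does not force a polynomial automorphism to agree with the identity on an open $p$-adic disk, so there is no contradiction to extract this way. No concrete use of volume-preservation is made either. What the paper actually does at this point is a second-order expansion: with $e=v_p(\beta)$, it shows $\psi^p(\mathcal{O})=p\beta+O(p^{e+2})+O(p^{2e})$, which already gives a contradiction when $e\geq 2$ (since $p\beta$ has exact valuation $e+1<\min\{e+2,2e\}$); and when $e=1$ it brings in the Hessian term, obtaining
\[
\psi^m(\mathcal{O})=(I+J+\cdots+J^{m-1})\beta+\frac{m(m-1)(2m-1)}{12}H\beta\beta^T+O(p^3).
\]
For $m=p$ and $p\geq 5$ the scalar $\frac{p(p-1)(2p-1)}{12}$ has $p$-adic valuation exactly $1$ (here is where $p\geq 5$ enters, since it ensures $p\nmid 12$), so the Hessian contribution lies in $p^3\ZZ_p^2$, while $p\beta$ has exact valuation $2$; hence $\psi^p(\mathcal{O})\neq\mathcal{O}$. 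This explicit computation, not an appeal to area-preservation or an infinite bootstrap, is the missing ingredient. You should replace the final paragraph with this second-order Taylor estimate, and in particular explain quantitatively why $p\geq 5$ is needed.
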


\begin{proof}
We will first prove something slightly more general.
Suppose that $\psi(x, y)\in\ZZ_p[x,y]^2$, that the point $\mathcal{O}=(0, 0)$ has prime period $N$  under $\psi$, and that the multiplier matrix \[\Lambda_\psi(\mathcal{O})=\prod_{i=0}^{N-1}J_{\psi}(\psi^{N-1-i}(\mathcal{O}))\] is an element of $\SL_2(\ZZ_p)$.  Suppose, further, that the reduction $\widetilde{\psi}$ of $\psi$ modulo $p$ fixes $\widetilde{\mathcal{O}}\in\affine^2(\FF_p)$, and that the multiplier matrix 
$\Lambda_{\widetilde{\psi}}(\widetilde{\mathcal{O}})=J_{\widetilde{\psi}}(\widetilde{\mathcal{O}})$ is the identity matrix.

Now, if $Q=\psi(\mathcal{O})$, then $Q= \mathcal{O}+O(p^e)$, for some largest $e\geq 1$, where $O(p^e)$ denotes an element of $p^e\ZZ_p^2$.  We also have $J_\psi(\mathcal{O})=J=I+pA$, for $I$ the $2\times 2$ identity matrix, and $A\in M_{2\times 2}(\ZZ_p)$.  Now, by Taylor's Theorem, we have
\begin{eqnarray*}
\psi(\mathcal{O})&=&Q\\
\psi^2(\mathcal{O})&=&Q+JQ+O(p^{2e})\\
&\vdots&\\
\psi^m(\mathcal{O})&=&(I+J+J^2+\cdots+J^{m-1})Q+O(p^{2e})=\left(\frac{I-J^m}{I-J}\right)Q+O(p^{2e}).
\end{eqnarray*}
Now, since $\psi^N(\mathcal{O})=\mathcal{O}$, we must have that
\[(I+J+J^2+\cdots+J^{N-1})Q=\left(\frac{I-J^N}{I-J}\right)Q=O(p^{2e}).\]
But if $N\neq p$, then \[(I+J+J^2+\cdots+J^{N-1})Q= NQ+O(p^{e+1})\neq O(p^{2e}).\]
Similarly, since $J=I+pA$, we have
\[\left(\frac{I-J^p}{I-J}\right)=\frac{I-(I+pA)^p}{pA}=\sum_{i=1}^p\binom{p}{i}p^{i-1}A^{i-1}=pI+O(p^2).\]
This shows that $\phi^p(\mathcal{O})=pQ+O(p^{e+2})+O(p^{2e})$, showing that $\mathcal{O}$ does not have period $p$, except perhaps if $e=1$.  In the case $e=1$, a slightly more refined calculation is needed.

If $e=1$ (and $p\neq 2$), we note by the examining the second term in the Taylor expansion that we have
\[\psi(R)=Q+JR+\frac{1}{2}HRR^T+O(p^3),\]
 whenever $R=O(p)$, where $J$ is the Jacobian of $\psi$ at $\mathcal{O}$, and $H$ the $2\times 2\times 2$ Hessian tensor.  The reader can check, by induction, that we have
\[\psi^m(\mathcal{O})=(I+J+J^2+\cdots+J^{m-1})Q+\frac{m(m-1)(2m-1)}{12}HQQ^T+O(p^{3}).\]
In particular, as $p\geq 5$ we have
$\psi^p(\mathcal{O})=pQ+O(p^3)$, and so it is not the case that $\mathcal{O}$ has period $p$ for $\psi$.

To recap, we've shown that if $\mathcal{O}$ is a point of period $N$ for $\psi(x, y)\in\ZZ_p[x, y]^2$, if $\widetilde{\psi}$ fixes $\widetilde{\mathcal{O}}$ modulo $p$, and $J_{\widetilde{\psi}}(\widetilde{\mathcal{O}})$ is the trivial element of $\SL_2(\FF_p)$, then $N$ is not prime.  But if $N>1$, then we may choose a prime $q\mid N$, and apply this result to $\psi^{N/q}$, under which $\mathcal{O}$ has period $q$, to obtain a contradiction.  Since all periodic points are in $\ZZ_p^2$, and since the conditions of the theorem are invariant under a $\ZZ_p$-linear change of variables, it follows that if $Q\in\ZZ_p^2$ is a periodic point for $\psi(x, y)\in\ZZ_p[x, y]^2$, with $\widetilde{Q}$ fixed by $\widetilde{\psi}$, and with $J_{\widetilde{\psi}}(\widetilde{Q})=I$, then it must be that $Q$ is a fixed point for $\psi$.

Now let $\phi(x, y)=(y, x+y^2+b)$, with $b\in \ZZ_p$, and suppose that $Q$ is a point of period $N$ for $\phi$, and $\widetilde{Q}$ is a point of period $M$ for $\widetilde{\psi}$. Clearly we must have $M\mid N$, so we write $N=dM$.  Now, if we set $\chi=\phi^M$, then $\widetilde{Q}$ is a fixed point of $\chi$, and $J_{\widetilde{\chi}}(\widetilde{Q})=\Lambda_{\widetilde{\phi}}(\widetilde{Q})$ by the chain rule.  Now, replacing $\chi$ with $\phi^r$, where $r$ is the order of $\Lambda_{\widetilde{\phi}}(\widetilde{Q})$, we have a periodic point $Q$ for $\psi$ such that $\widetilde{Q}$ is fixed for $\widetilde{\psi}$, and $\Lambda_{\widetilde{\psi}}(\widetilde{Q})$ is the identity.  It follows from the argument above that $\phi^{Mr}(Q)=\psi(Q)=Q$, and so $Q$ is a point of period divisible by $M$, but dividing $Mr$, and so $N=Md$ for some $d\mid r$.
\end{proof}

Lemma~\ref{lem:local to global} is the crux of the algorithm used to verify Proposition~\ref{prop:comp}.  Given a value $b\in\QQ$, and a prime $p\geq 5$ at which $b$ is integral, one can compile a list of periods for $\phi(x, y)=(y, x+y^2+b)$ modulo $p$, and then use Lemma~\ref{lem:local to global} to construct a finite set $S(p, b)\subseteq \ZZ^+$ such that $N\in S(p, b)$ whenever $\phi$ has a $\QQ$-rational periodic point of period $N$.  One might hope, for a given $b\in\QQ$, that we would obtain
\[\bigcap_{5\leq p\leq X}S(b, p)\subseteq \{1, 2, 3, 4, 6, 8\}\]
for $X$ large enough, where we take $S(b, p)=\ZZ^+$ if $p$ is a bad prime.  It turns out that this is too much to ask: the map $\phi(x, y)=(y, y^2-1/4+x)$ has a point of period 2 at $P=(1/2, -1/2)$, and the multiplier of this cycle is
 $\Lambda_\phi(Q)=\left(\begin{matrix}1 & -1 \\ 1 &0\end{matrix}\right)$.  Note that $\Lambda_\phi(Q)$, and its reduction modulo any (odd) prime, has order 6, and so we will have $12\in S(-\frac{1}{4}, p)$ for any prime $p\geq 5$.  Fortunately, there is an alternate means of verifying Conjecture~\ref{conj} in this case.

\begin{lemma}\label{lem:heights used for computations}
Let $b\in\QQ$, let $\phi(x, y)=(y, x+y^2+b)$, and suppose that $\phi$ has a periodic point $Q\in\affine^2(\QQ)$.  Then the denominator of $b$ is a perfect square, and $h(Q)\leq \frac{1}{2}h(b)+\log 3$.
\end{lemma}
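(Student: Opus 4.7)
The lemma splits into two claims; both rest on Lemma~\ref{lem:close to root}.  Since $Q$ is periodic for $\phi$, the functional equations $\hat{\lambda}^\pm_{v,\phi}(\phi(P))=2\hat{\lambda}^\pm_{v,\phi}(P)$ force $\hat{\lambda}^\pm_{v,\phi}(Q)=0$ at every $v$, and Lemma~\ref{lem:local heights quadratic} then places $Q$ and every iterate outside $\basin^+_v(\phi)\cup\basin^-_v(\phi)$.  For the height bound, apply the first inequality of Lemma~\ref{lem:close to root} at each $v\in M_\QQ$ to obtain $\|x,y\|_v\leq (3)_v\max\{1,|b|_v\}^{1/2}$, take $\log\max(\cdot,1)$, and sum with local weights:
\[
h(Q)\leq \sum_{v\in M_\QQ} n_v\Bigl((\log 3)_v+\tfrac{1}{2}\log^+|b|_v\Bigr)=\tfrac{1}{2}h(b)+\log 3,
\]
using $\sum_{v\mid\infty} n_v=1$.

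For the denominator claim I must show $v_p(b)\geq 0$ or $v_p(b)\in 2\ZZ$ for every prime $p$.  At odd primes this is immediate from the remark following Lemma~\ref{lem:close to root}: since $v_p(2)=0$, a negative $v_p(b)$ would force $v_p(x)=v_p(y)=\tfrac{1}{2}v_p(b)$, which can be an integer only when $v_p(b)$ is even.  It remains to rule out $v_2(b)$ being negative and odd, and for this I would apply the second part of Lemma~\ref{lem:close to root} (now with the factor $|2|_v^{-1}$ active) to obtain a root $\gamma\in\overline{\QQ_2}$ of $Z^2+b$ with $v_2(y-\gamma)\geq -1$.  If $v_2(b)$ is odd, then $v_2(\gamma)=\tfrac{1}{2}v_2(b)\in\tfrac{1}{2}\ZZ\setminus\ZZ$ while $v_2(y)\in\ZZ$; the ultrametric inequality gives $v_2(y-\gamma)=\min(v_2(y),v_2(\gamma))$, and comparing with $v_2(y-\gamma)\geq -1$ leaves only the possibility $v_2(b)=-1$.

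To exclude this residual case I would apply the same lemma to every iterate $\phi^k(Q)=(y_{k-1},y_k)$, obtaining $v_2(y_k)\geq -1$ for all $k$, and then chase the recurrence $y_{k+1}=y_{k-1}+y_k^2+b$ with $v_2(b)=-1$.  If all $v_2(y_k)\geq 0$ then $v_2(y_k^2+b)=-1$ forces $v_2(y_{k+1})=-1$, a contradiction; if some $v_2(y_k)=-1$ then $v_2(y_k^2)=-2$ dominates and forces $v_2(y_{k+1})=-2$, again a contradiction.

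The main obstacle is precisely this $p=2$ endgame: at odd primes the basin bound $|y-\gamma|_v\leq 1$ instantly kills half-integer $v_p(\gamma)$, whereas at $p=2$ it inflates to $|y-\gamma|_2\leq 2$, admitting the borderline $v_2(b)=-1$ that can only be removed by invoking the full orbit structure rather than a single basin inequality.
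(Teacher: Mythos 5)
Your proposal is correct and follows essentially the same route as the paper: periodicity places every iterate of $Q$ outside $\basin^+_v\cup\basin^-_v$ at every place, from which the first inequality of Lemma~\ref{lem:close to root} yields the height bound by summing local contributions, the remark after Lemma~\ref{lem:close to root} handles odd $p$, and the $p=2$ case reduces to $v_2(b)=-1$, which is excluded by a $2$-adic valuation chase along the orbit using the recurrence $y_{k+1}=y_{k-1}+y_k^2+b$. The only cosmetic difference is that you invoke the second part of Lemma~\ref{lem:close to root} to obtain $v_2(y_k)\geq -1$, where the first part already gives $v_2(y_k)\geq 0$ directly (since $|y_k|_2\leq |b|_2^{1/2}=\sqrt{2}$ and $v_2(y_k)\in\ZZ$); the paper's case split on whether $|y_N|_2=2$ ever occurs is the same dichotomy you draw.
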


Note that if one should like to verify Conjecture~\ref{conj} for all $b\in \QQ$ with $H(b)\leq T$, one potentially has to apply the algorithm above about $\frac{6}{\pi^2}T^2$ times.  The first observation in Lemma~\ref{lem:heights used for computations} reduces this to about $\frac{6}{\pi^2}T^{3/2}$ applications, which is a significant savings.  The second observation in Lemma~\ref{lem:heights used for computations} gives an alternate means of verifying Conjecture~\ref{conj} for a given value $b\in\QQ$ which, while much more costly than the algorithm described above, is guaranteed to provide a conclusive answer.  In the verification of Proposition~\ref{prop:comp}, this alternate method was used to treat parameters $b\in\QQ$ for which the first method failed to verify the conjecture.

\begin{proof}[Proof of Lemma~\ref{lem:heights used for computations}]
Let $p$ be an odd prime, suppose that $\phi$ has a periodic point $(x, y)\in\QQ_p$, and suppose that $|b|_p>1$.  By Lemma~\ref{lem:close to root}, there is a root $\gamma^2=b$ with $|x-\gamma|_p\leq 1$.  But note that $|\gamma|_p>1$, and so it must be the case that
\[p^{-v_p(x)}=|x|_p=|\gamma|_p=|b|_p^{1/2}=p^{-\frac{1}{2}v_p(b)}.\]
In particular, $v_p(b)$ must be even.  In the case $p=2$, we have the same argument, unless $|\gamma|_2\leq 2$, that is, unless $b=2\alpha$ or $b=4\beta$, for $\alpha, \beta$ odd.  In the second case it remains true that $v_2(b)$ is even, so we consider just the first case.  It also, by Lemma~\ref{lem:close to root}, must be true that $|x_N|_2, |y_N|_2\leq 2$ for all $N$.  Note that if $|y_N|_2=2$ for any $N$, then $|y_{N+1}|_2=|x+y_N^2+b|_2=4$, which is a contradiction.  So we must have $|y_N|\leq 1$ for all $N$, and hence $|x_{N}|_2=|y_{N-1}|_2\leq 1$ for all $N$.  But then $|y_{N+1}|_2=|x_N+y_N^2+b|_2=2$, a contradiction.  So it cannot be the case that $|b|_2=2$.  We have shown that $v_p(b)$ is even whenever $v_p(b)<0$, and so the denominator of $b$ is a perfect square.

For the height bound, we simply note that if $P$ is periodic for $\phi$, then $P\in \affine^2(\QQ)\setminus(\basin^+_v(\phi)\cup\basin^-_v(\phi))$  By Lemma~\ref{lem:close to root}, we have
\[\log^+\|P\|_v\leq \frac{1}{2}\log^+|b|+\log (3)_v\]
for every place $v$.  Summing over all places, with the appropriate weights, we obtain our bound.
\end{proof}

The proof of Proposition~\ref{prop:comp} is simply an application of one or the other of these lemmas for every value of $b$ under consideration, and one could presumably extend the computations significantly from what has been done here.

Another approach to building evidence for Conjecture~\ref{conj} would be to fix $N\not\in\{1, 2, 3, 4, 6, 8\}$, and show that there is no $b\in\QQ$ such that $\phi_b(x, y)$ has a $\QQ$-rational point of period $N$.  Pairs $(b, P)$ such that $P$ is a point of period dividing $N$ for $\phi_b$ are parametrized by a curve $\Gamma_N\subseteq\affine^3$, defined by the two equations implicit in $\phi_b^N(P)=P$.  Of course, these curves are not irreducible, as $N\mid M$ implies $\Gamma_N\subseteq \Gamma_M$, but one could restrict attention to the component $\Gamma'_M\subseteq \Gamma_M$ corresponding to examples of exact period $M$.   For instance, a first step in lending more credence to Conjecture~\ref{conj} would be to show that $\Gamma_5'(\QQ)=\emptyset$.  Although the normalization of the projective closure of $\Gamma_5'$ has genus 14, it also admits several quotients, and it is possible that one of these would be amenable to the Chabauty-Coleman method.
 We plan to investigate these curves in a future project.

\end{document}